\documentclass[a4paper]{amsart}

\usepackage{amsmath, amssymb, amscd, amsthm, a4wide,verbatim,enumerate,mathtools}
\usepackage{hyperref}

\usepackage{tikz}
\usetikzlibrary{arrows}

%%%%%%%%%%%%% STYLE FOR THE THEOREMS

\newtheorem{thm}{Theorem}[section]
\newtheorem{cor}[thm]{Corollary}
\newtheorem{lemma}[thm]{Lemma}
\newtheorem{prop}[thm]{Proposition}

\theoremstyle{definition}
\newtheorem{remark}[thm]{Remark}
\numberwithin{equation}{section}

%%%%%%%%%%%%%%Personal abbreviations%%%%%%%%%%%%%%%%%%%%%%%%%%%%%%%%
\newcommand{\R}{\mathbb R}
\newcommand{\N}{\mathbb N}
\newcommand{\C}{\mathbb C}
\newcommand{\Z}{\mathbb Z}
\newcommand{\T}{\mathbb T}
\newcommand{\al}{\alpha}
\newcommand{\be}{\beta}
\newcommand{\ga}{\gamma}

\newcommand{\de}{\delta}
\newcommand{\De}{\Delta}

\newcommand{\si}{\sigma}
\newcommand{\te}{\theta}
\newcommand{\Te}{\Theta}

\newcommand{\la}{\lambda}
\newcommand{\La}{\Lambda}
\newcommand{\Ups}{\Upsilon}

\newcommand{\tensor}{\otimes}
\newcommand{\rphis}[5]{\,_{#1}\varphi_{#2}\!\left( \genfrac{.}{.}{0pt}{}{#3}{#4}
\,;#5 \right)}
\newcommand{\hf}{\frac{1}{2}}
\newcommand{\sgn}{\mathrm{sgn}}
\newcommand{\mhyphen}{\text{--}}

\begin{document}
\date{\today}
\title{Coupling coefficients for tensor product representations of quantum $\mathrm{SU}(2)$}
\author{Wolter Groenevelt}
\address{Technische Universiteit Delft, DIAM, PO Box 5031,
2600 GA Delft, the Netherlands}
\email{w.g.m.groenevelt@tudelft.nl}

\begin{abstract}
We study tensor products of infinite dimensional representations (not corepresentations) of the $\mathrm{SU}(2)$ quantum group. Eigenvectors of certain self-adjoint elements are obtained, and coupling coefficients between different eigenvectors are computed. The coupling coefficients can be considered as $q$-analogs of Bessel functions. As a results we obtain several $q$-integral identities involving $q$-hypergeometric orthogonal polynomials and $q$-Bessel-type functions.
\end{abstract}

\maketitle

\section{Introduction}

Many identities for special functions have a structure coming from representation theory of Lie or quantum algebras. In e.g.~\cite{VdJ}, \cite{KoeVdJ}, \cite{Groen2} summation and integral identities are derived from tensor products of irreducible $*$-representations of the Lie algebras $\mathfrak{su}(2)$, $\mathfrak{su}(1,1)$ and quantum versions of these Lie algebras. In this paper we consider representations of the $\mathrm{SU(2)}$ quantum group, i.e.~the quantized algebra $\mathcal A_q=\mathcal A_q(\mathrm{SU}(2))$ of functions on the Lie group $\mathrm{SU(2)}$. In the classical $q=1$ case the representation theory of this algebra is trivial, but in the quantum case the algebra has a class of infinite dimensional irreducible $*$-representations. We consider two- and threefold tensor products of these representations, and compute coupling coefficients between eigenvectors of certain self-adjoint elements in the algebra $\mathcal A_q$. This leads to several $q$-integral identities involving different classes of $q$-hypergeometric special functions. The identities we obtain can be viewed as connection formulas between orthogonal bases in Hilbert spaces of functions in two variables.

In \cite{Koo91} Koornwinder studies the tensor product of two infinite dimensional representations of $\mathcal A_q$. Using spectral analysis of a self-adjoint element $\ga\be \in \mathcal A_q$, the tensor product can be decomposed into irreducible representations. The element $\ga\be$ is a special case of the self-adjoint element $\rho_{\tau,\si} \in \mathcal A_q$, $\tau,\si \in \R$. The latter was introduced by Koornwinder in \cite{Koo93}, in which an explicit expression for the Haar-functional on the subalgebra generated by $\rho_{\tau,\si}$ is derived using Askey-Wilson polynomials. Koelink and Verding \cite{KoeVer} obtained the Haar-functional in a different way using spectral analysis of $\rho_{\tau,\si}$ in an infinite dimensional representation of $\mathcal A_q$. The latter approach uses eigenvectors which are given explicitly in terms of $q$-hypergeometric orthogonal polynomials. In this paper we combine the approaches from \cite{KoeVer} and \cite{Koo91} to derive several $\rho_{\tau,\si}$-eigenvectors for tensor product representations. Having different explicit eigenvectors, it may be expected to find `nice' explicit expressions for the coupling coefficients between them.

The coupling coefficients we obtain can be considered as $q$-analogs of Bessel functions. The most general ones in this paper are the $q$-Meixner functions \cite{GroenK}. It is shown that several known types of $q$-Bessel functions can be considered as limit cases of the $q$-Meixner funtions. This leads to the scheme of $q$-Bessel functions in Figure \ref{fig:qBessel}. For the definitions of the functions in this scheme and their Hankel-type orthogonality relations see Sections \ref{sec:twofold} and \ref{sec:threefold}. Let us remark that Jackson's \cite{J} well-known $q$-analogs of Bessel functions are included in this scheme: the Stieltjes-Wigert case is closely related to Jackson's second $q$-Bessel function, and the Hahn-Exton $q$-Bessel function is Jackson's third $q$-Bessel function. Furthermore, Jackson's first and second $q$-Bessel functions are basically the same function, see e.g.~\cite[Theorem 14.1.3]{Ism}.

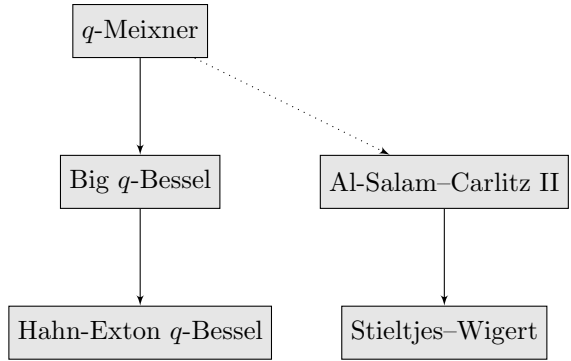
\begin{figure}[h] \label{fig:qBessel}
\tikzstyle{int}=[draw, fill=gray!20, minimum size=2em]
\begin{tikzpicture}[node distance=2cm,auto,>=latex']
    \node [int] (a) {$q$-Meixner};
    \node [int] (b) [below of=a] {Big $q$-Bessel};
    \node [int] (c) [below of=b] {Hahn-Exton $q$-Bessel};
    \node [int] (d) [right of=b, node distance=4cm]{Al-Salam--Carlitz II};
    \node [int] (e) [below of=d] {Stieltjes--Wigert};
    \path[->] (a) edge node{} (b);
    \path[->] (b) edge node{} (c);
    \path[->, dotted] (a) edge node{} (d);
    \path[->] (d) edge node{} (e);
\end{tikzpicture}
\caption{$q$-Analogs of Bessel functions}
\end{figure}
The solid arrows in the scheme correspond to limit relations between elements in the algebra $\mathcal A_q$. The left side of the scheme contains coupling coefficients for three-fold tensor product representation, the right side for two-fold tensor products.

The scheme in Figure \ref{fig:qBessel} can be considered as being part of an extended Askey-scheme of $q$-hypergeometric integral transforms \cite{KS01}, which contains the well-known Askey-scheme of $q$-hypergeometric polynomials, see \cite{KLS}. Figure \ref{fig:qBessel} is also closely related to the $q$-Meixner scheme of indeterminate moment problems within the Askey-scheme of $q$-hypergeometric orthogonal polynomials, see \cite[p.24]{Chr}. Indeed, the $q$-Meixner functions are $q$-Meixner polynomials for specific values of the spectral variable, and therefore the orthogonality measure for the $q$-Meixner functions gives a solution for the corresponding indeterminate moment problem, see \cite{GroenK}. The other cases in Figure \ref{fig:qBessel} also correspond to indeterminate moment problems, except the Hahn-Exton $q$-Bessel functions.

The outline of the paper is as follows. In Section \ref{sec:quantumSU(2)} we give preliminaries on representations of $\mathcal A_q(\mathrm{SU}(2))$. In particular we consider the tensor product of two infinite dimensional irreducible $*$-representations and corresponding eigenvectors, and we introduce useful subalgebras depending on two parameters $\tau,\si \in \R$. In Sections \ref{sec:tauinfty-CGC} and \ref{sec:tausi-CGC} we determine explicitly the Clebsch-Gordan coefficients corresponding to eigenvectors of $\rho_{\tau,\si}$. First in Section \ref{sec:tauinfty-CGC} we do this in case $\si=\infty$, and in Section \ref{sec:tausi-CGC} the general case is considered. In Section \ref{sec:twofold} we first determine eigenvectors of $\rho_{\tau,\si}$, in case of a two-fold tensor product representation, which are different from the eigenvectors from Section \ref{sec:tauinfty-CGC}. Then we determine explicitly coupling coefficients between the different eigenvectors. This leads to a $q$-integral identity involving several types of $q$-hypergeometric orthogonal polynomials and functions closely related to the Al-Salam--Carlitz II polynomials. In section \ref{sec:threefold} we first determine eigenvectors for $\rho_{\tau,\si}$ in a three-fold tensor product representation. We show that the coupling coefficients between different eigenvectors are given in terms of $q$-Meixner functions. This leads to more $q$-integral identities.

\*\\
\textbf{Notation.}
We denote the set of integers by $\Z$, and by $\N$ we denote the set of nonnegative integers. $\T=\{x \in \C \mid |x|=1\}$, the unit circle in $\C$. By $\sqrt{\cdot}$ we denote the principal branch of the square root. Throughout the paper $q$ is a fixed number in $(0,1)$. We use notations for $q$-shifted factorials, $\te$-functions and $q$-hypergeometric functions as in \cite{GR}, i.e.,
\begin{gather*}
(a;q)_\infty = \prod_{j=0}^{\infty} (1-aq^j),\quad (a;q)_n = \frac{ (a;q)_\infty}{(aq^n;q)_\infty} \qquad n \in \Z,\\
\te(a;q) = (a;q)_\infty (q/a;q)_\infty, \qquad a \not\in q^\Z,\\
(a_1,a_2,\ldots, a_k;q)_n = \prod_{j=1}^k (a_j;q)_n, \quad
\te(a_1,a_2,\ldots,a_k;q)= \prod_{j=1}^k \te(a_j;q),\\
\rphis{r}{s}{a_1,a_2,\ldots,a_r}{b_1,b_2,\ldots,b_s}{q,z} = \sum_{n=0}^\infty \frac{(a_1,a_2,\ldots,a_r;q)_n }{ (q,b_1,b_2,\ldots,b_s;q)_n}\left( (-1)^n q^{\frac12n(n-1)} \right)^{s-r-1} z^n,
\end{gather*}
and we also use the shorthand notations
\[
(ab^{\pm 1};q)_n = (ab,a/b;q)_n, \qquad \te(ab^{\pm 1};q) = \te(ab,a/b;q).
\]
An identity that we frequently use, often without mentioning, is the $\te$-product identity:
\begin{equation} \label{eq:te-product}
\te(xq^k;q) = (-x)^{-k} q^{-\frac12 k(k-1)} \te(x), \qquad k \in \Z.
\end{equation}
We also need the Jackson $q$-integral, see \cite{GR}, which is defined by
\begin{gather*}
\int_{0}^\al f(x) \, d_qx =  (1-q) \sum_{k=0}^\infty f(\al q^k)\al q^k, \\
\int_\be^\al f(x) \, d_qx =  \int_0^\al f(x) \, d_qx - \int_0^\be f(x) \, d_qx,\\
\int_0^{\infty(\al)} f(x) \, d_qx = (1-q) \sum_{k=-\infty}^\infty f(\al q^k) \al q^k,\\
\int_\be^{\infty(\al)} f(x) \, d_qx = \int_0^{\infty(\al)} f(x) \, d_qx\, -\,
\int_{0}^\be f(x) \, d_qx, \\
\int_{\infty(\be)}^{\infty(\al)} f(x) \, d_qx = \int_0^{\infty(\al)} f(x) \, d_qx\, - \,
\int_0^{\infty(\be)} f(x) \, d_qx,
\end{gather*}
for a function $f$ such that the sums converge absolutely. Observe that for $\al>0$ and $\be<0$,
\[
\int_\be^\al f(x) d_qx = (1-q)\sum_{x \in \be q^\N \cup \al q^\N} f(x) |x|.
\]

\section{The ${\mathrm{SU}(2)}$ quantum group} \label{sec:quantumSU(2)}
The ${\mathrm{SU}(2)}$ quantum group is the complex unital associative algebra $\mathcal A_q=\mathcal A_q(\mathrm{SU}(2))$  generated by $\al$, $\be$, $\ga$, $\de$, which satisfy the relations
\begin{equation} \label{eq:comm rel}
\begin{split}
\al \be = q \be \al, \quad \al \ga = q \ga \al, \quad \be \de = q \de \be, \quad \ga \de = q \de \ga, \\
\be \ga = \ga \be, \quad \al \de - q\be \ga = 1 = \de \al - q^{-1}\be \ga.
\end{split}
\end{equation}
There is a $*$-structure defined on the generators by
\begin{equation} \label{eq:*structure}
\al^* = \de, \quad \be^* = -q \ga, \quad \ga^* = -q^{-1} \be, \quad \de^* = \al.
\end{equation}
$\mathcal A_q$ is a Hopf-$*$-algebra with comultiplication $\De$, which is defined on the generators by
\begin{equation} \label{eq:coprod}
\begin{split}
\De(\al) = \al \tensor \al + \be \tensor \ga, \quad \De(\be) = \al \tensor \be + \be \tensor \de,\\
\De(\ga) = \ga \tensor \al + \de \tensor \ga, \quad \De(\de) = \de \tensor \de + \ga \tensor \be.
\end{split}
\end{equation}
We do not need the antipode in this paper. The irreducible $*$-representation of $\mathcal A_q$ are either 1-dimensional, or infinite dimensional. The infinite dimensional irreducible $*$-representations are labeled by $\phi \in [0,2\pi)$, and we denote a representation by $\pi_\phi$. The representation space of $\pi_\phi$ is $\ell^2(\N)$, and the generators $\al,\be,\ga,\de$ act on the standard orthonormal basis $\{e_n\}_{n \in \N}$ of $\ell^2(\N)$ as raising and lowering operators:
\begin{equation} \label{eq:repr}
\begin{split}
\pi_\phi(\al)\, e_n &= \sqrt{1-q^{2n}}\, e_{n-1},\\
\pi_\phi(\be)\,e_n &= - e^{-i\phi} q^{n+1}\, e_n,\\
\pi_\phi(\ga)\, e_n &= e^{i\phi} q^n\, e_n,\\
\pi_\phi(\de)\, e_n &= \sqrt{1-q^{2n+2}} e_{n+1}.
\end{split}
\end{equation}
Note that the actions of $\be$ and $\de$ can be found from the actions of $\al$ and $\ga$ using the $*$-structure.

\subsection{The tensor product representation} \label{sec:representationT}
In this paper we are mainly interested in the tensor product representation
\begin{equation} \label{def:representation T}
\mathcal T = \mathcal T_{\phi,\psi} = (\pi_\phi \tensor \pi_\psi)  \De, \qquad \phi,\psi \in [0,2\pi),
\end{equation}
of $\mathcal A_q$ on $\ell^2(\N) \tensor \ell^2(\N)$. Koornwinder \cite{Koo91} determined the decomposition of $\mathcal T$ into irreducible representations. The result is as follows.

Let $\rho$ be the direct integral representation $\rho =\int_0^{2\pi} \pi_\phi\, d\phi$ acting on $L^2(0,2\pi) \tensor \ell^2(\N) \cong  \int_0^{2\pi} \ell^2(\N) d\phi$. Let $\{v_m\}_{m \in \Z}$ denote the standard orthonormal basis of $L^2(0,2\pi)$, $v_m(\phi)= \frac{1}{\sqrt{2\pi}}e^{-im\phi}$. One checks directly the actions of the generators on standard basis elements:
\begin{equation} \label{eq:dirint}
\begin{split}
\rho(\al)\, v_m \tensor e_n &= \sqrt{ 1- q^{2n}}\, v_m \tensor e_{n-1},\\
\rho(\be)\, v_m \tensor e_n &= -q^{n+1}\, v_{m+1} \tensor e_{n},\\
\rho(\ga)\, v_m \tensor e_n &= q^n\,v_{m-1} \tensor e_n,\\
\rho(\de)\, v_m \tensor e_n &= \sqrt{ 1- q^{2n+2}}\, v_m \tensor e_{n+1}.
\end{split}
\end{equation}
\begin{thm} \label{thm:tensor product decomp}
For $\phi,\psi \in [0,2\pi)$ the tensor product representation $\mathcal T_{\phi,\psi}$ is unitarily equivalent to the direct integral representation $\rho$.
\end{thm}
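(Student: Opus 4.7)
The plan is to build an explicit unitary intertwiner $U = U_{\phi,\psi} : \ell^2(\N) \otimes \ell^2(\N) \to L^2(0,2\pi) \otimes \ell^2(\N)$ along the lines of Koornwinder \cite{Koo91}. By the description of $\rho$ in \eqref{eq:dirint}, it suffices to produce an orthonormal basis $\{\xi_{m,n}\}_{m\in\Z,\,n\in\N}$ of $\ell^2(\N)\otimes \ell^2(\N)$ on which $\mathcal T_{\phi,\psi}(\al), \mathcal T_{\phi,\psi}(\be), \mathcal T_{\phi,\psi}(\ga), \mathcal T_{\phi,\psi}(\de)$ act by exactly the formulas listed there for $\rho(\al), \rho(\be), \rho(\ga), \rho(\de)$ on $\{v_m \otimes e_n\}$; the assignment $U\xi_{m,n} := v_m \otimes e_n$ then realises the desired equivalence.

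The main work is the spectral analysis of the self-adjoint element $\mathcal T_{\phi,\psi}(\be\ga)$. I would first expand $\De(\be\ga)$ using \eqref{eq:coprod} and \eqref{eq:comm rel} into a sum of tensor products, and then apply \eqref{eq:repr} on the standard basis $e_k \otimes e_l$. A short calculation yields
\[
\mathcal T_{\phi,\psi}(\be\ga)\,(e_k\otimes e_l) \in \C\,e_{k-1}\otimes e_{l-1} \;+\; \C\,e_k\otimes e_l \;+\; \C\,e_{k+1}\otimes e_{l+1},
\]
so $\mathcal T_{\phi,\psi}(\be\ga)$ preserves each weight space $H_p = \overline{\mathrm{span}}\{e_{l+p}\otimes e_l : l \in \N,\, l+p \in \N\}$, $p \in \Z$, and restricts there to a tridiagonal Jacobi operator whose entries tend to $0$. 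The resulting three-term recurrence is the defining recurrence of the Wall polynomials (little $q$-Jacobi polynomials at one vanishing parameter), and their discrete orthogonality and completeness yield an orthonormal eigenbasis of $H_p$ with spectrum $\{-q^{2n+1}:n\in\N\}$. Labelling these eigenvectors $\xi_{m,n}$, with $m$ determined by $p$ up to a sign chosen to match the shift direction of $\rho(\ga)$, gives the candidate basis. Note that the resulting eigenvalues match the action $\rho(\be\ga)v_m\otimes e_n = -q^{2n+1}v_m\otimes e_n$, with the correct infinite multiplicity in $m$.

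Once the $\xi_{m,n}$ are in hand, the intertwining identities $U\mathcal T_{\phi,\psi}(x) = \rho(x)U$ need only be verified for $x = \al$ and $x = \ga$, since the $*$-structure \eqref{eq:*structure} then propagates them to $\be$ and $\de$. The identity for $\ga$ will be structural: $\mathcal T(\ga)$ maps $H_p$ into $H_{p+1}$ and, on the Wall-polynomial eigenvector of degree $n$, produces the factor $q^n$ demanded by $\rho(\ga)v_m\otimes e_n = q^n v_{m-1}\otimes e_n$. The identity for $\al$ reduces to a standard contiguous relation of the Wall polynomials linking degrees $n$ and $n-1$ with coefficient $\sqrt{1-q^{2n}}$. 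The principal obstacle will be the bookkeeping in the tridiagonalisation: combining the four terms of $\De(\be\ga)$, matching the resulting Jacobi coefficients against the Wall-polynomial recurrence, and absorbing the $e^{\pm i(\phi-\psi)}$ phases arising from \eqref{eq:repr} into the normalisation of the $\xi_{m,n}$ so that both intertwining identities hold simultaneously, with no residual $\phi,\psi$ dependence on the $\rho$-side.
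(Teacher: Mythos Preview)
Your plan is correct and matches the paper's own argument almost exactly: the paper also diagonalises the self-adjoint element $\ga\ga^* = -q^{-1}\be\ga$ on the invariant subspaces $H_p$, identifies the resulting Jacobi operator with the Wall-polynomial recurrence, and then checks the generator actions on the resulting eigenbasis $V_{x,p}$ to build the intertwiner $\La V_{x,p}=v_p\otimes e_x$. The only cosmetic difference is that the paper determines the constants in $\mathcal T(\de)V_{x,p}$ and $\mathcal T(\ga)V_{x,p}$ by pairing with a single basis vector $e_0\otimes e_p$ rather than by invoking Wall-polynomial contiguous relations directly (it derives those relations afterwards as a remark), and its $H_p$ is indexed with the opposite sign to yours.
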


It is instructive to go through the proof of Theorem \ref{thm:tensor product decomp}. We compute the Clebsch-Gordan coefficients corresponding to the standard basis $\{e_n\}_{n \in \N}$ of $\ell^2(\N)$. Observe that by \eqref{eq:repr} the standard basis vector $e_n$ is an eigenvector of $\pi_\phi(\ga \ga^*)$ for eigenvalue $q^{2n}$. We consider $\mathcal T(\ga \ga^*)$ acting on basis elements $e_{n_1} \tensor e_{n_2}$. From \eqref{eq:coprod} we obtain
\[
\De(\ga \ga^*) = -q^{-1}\Big( \ga \be \tensor \al \de+ \ga \al \tensor \al \be + \de \be \tensor \ga \de + \de \al \tensor \ga \be\Big),
\]
and then \eqref{eq:repr} gives
\[
\begin{split}
\mathcal T(\ga\ga^*) \,e_{n_1} \tensor e_{n_2}=&\, e^{i(\phi-\psi)} q^{n_1+n_2-1} \sqrt{ (1-q^{2n_1})(1-q^{2n_2})} \, e_{n_1-1} \tensor e_{n_2-1} \\
&+ [ q^{2n_1}+ q^{2n_2} - q^{2n_1+2n_2}(1+q^2)]\, e_{n_1} \tensor e_{n_2}\\
& + e^{-i(\phi-\psi)} q^{n_1+n_2+1} \sqrt{ (1-q^{2n_1+2})(1-q^{2n_2+2})} \, e_{n_1+1} \tensor e_{n_2+1}.
\end{split}
\]
We define
\[
f_n^p =
\begin{cases}
e_n \tensor e_{n+p}, & p \geq 0,\\
e_{n-p} \tensor e_n, & p <0,
\end{cases}
\]
and let $H_p$, $p \in \Z$, be the Hilbert space defined by
\[
H_p = \overline{ \text{span}}\{ f_n^p\ | \ n \in \N \} \cong \ell^2(\N).
\]
Then we see that $\mathcal T(\ga\ga^*)$ leaves $H_p$ invariant, and restricted to $H_p$ it acts as a Jacobi operator (or tridiagonal operator). To diagonalize $\mathcal T(\ga\ga^*)|_{H_p}$ we need the Wall polynomials, see \cite{KLS}, which are defined by
\begin{equation} \label{eq:defWallpol}
p_n(y;a;q) = \rphis{2}{1}{ q^{-n}, 0 }{aq}{q, qy}.
\end{equation}
For $0<a<q^{-1}$ these polynomials satisfy the orthogonality relations,
\[
\sum_{x\in \N} p_m(q^x;a;q) p_n(q^x;a;q) \frac{ (aq)^x} {(q;q)_x}= \de_{mn} \frac{ (aq)^n (q;q)_n}{(aq;q)_\infty (aq;q)_n},
\]
and they form a basis for the corresponding weighted $\ell^2$-space. Let the function $\bar p_n(q^x;a;q)$ be defined by
\begin{equation} \label{eq:defbarpn}
\bar p_n(q^x;a;q)=(-1)^{n+x} \sqrt{ \frac{ (aq)^{x-n} (aq;q)_\infty (aq;q)_n }{ (q;q)_n (q;q)_x } }\, p_n(q^x;a;q),
\end{equation}
then from the orthogonality relation for the Wall polynomials and from completeness we obtain the orthogonality relations
\[
\sum_{x\in \N} \bar p_n(q^x;a;q) \bar p_m(q^x;a;q) = \de_{nm}, \qquad \sum_{n \in \N} \bar p_n(q^x;a;q) \bar p_n(q^y;a;q) = \de_{xy},
\]
for $0<a<q^{-1}$. The latter, the dual orthogonality relations, actually correspond to orthogonality relations for the Al-Salam--Carlitz II polynomials, see \cite{KLS}.
The three-term recurrence relation for the Wall polynomials is equivalent to
\[
\begin{split}
q^x \bar p_n(q^x;a;q) =& q^{n+\hf}\sqrt{ a(1-q^n)(1-aq^{n+1}) } \bar p_{n+1}(q^x;a;q) \\
&+ \big[q^n(1-aq^{n+1})+aq^n(1-q^n)\big] \bar p_n(q^x;a;q)\\
&+ q^{n-\hf}\sqrt{ a(1-q^n)(1-aq^n)} \bar p_{n-1}(q^x;a;q),
\end{split}
\]
with $\bar p_{-1}(q^x)=0$ and $\bar p_0(q^x)=(-1)^x \sqrt{(aq)^x (aq;q)_\infty / (q;q)_x }$. We define
\begin{equation} \label{eq:Clebsch-Gordan inftyinfty}
c_{x,p,n} =
\begin{cases}
\bar p_n(q^{2x};q^{2p};q^2), & p\in \N,\\
\bar p_n(q^{2x};q^{-2p};q^2) , & -p\in \N,
\end{cases}
\end{equation}
then it follows that the vector
\begin{equation} \label{eq:eigenvector V infty infty}
V_{x,p} =
\begin{cases}
\displaystyle \sum_{n \in \N} e^{in\psi-i(n+p)\phi+ix(\phi-\psi)} c_{x,p,n}\, e_{n}\tensor e_{n+p}, & p \geq 0,\\ \\
\displaystyle \sum_{n \in \N} e^{i(n-p)\psi-in\phi+ix(\phi-\psi)} c_{x,p,n}\, e_{n-p} \tensor e_{n}, & p \leq 0,
\end{cases}
\end{equation}
is an eigenvector of $\mathcal T(\ga\ga^*)$ for eigenvalue $q^{2x}$, $x \in \N$. We define $e_n = 0$ for $n \in -\N_{\geq 1}$, then the eigenvector $V_{x,p}$ can actually be defined for all $p\in \Z$ by the first expression in \eqref{eq:eigenvector V infty infty}. This is a consequence of the identity
\[
\bar p_n(q^x;q^{-p};q) =
\begin{cases}
\bar p_{n-p}(q^x;q^{p};q), & 0 \leq p \leq n,\\
0, & p>n,
\end{cases}
\]
which follows from the $_2\varphi_1$-expression for $\bar p_n$, and, for $p \in \N$,
\begin{equation} \label{eq:sum p<->-p}
(q^{1-p};q)_\infty \sum_{n=0}^\infty \frac{ A_n }{(q,q^{1-p};q)_n } = \sum_{n=p}^{\infty} \frac{ A_n (q^{1-p+n};q)_\infty }{(q;q)_n} = (q^{1+p};q)_\infty \sum_{n=0}^\infty \frac{ A_{n+p} }{(q,q^{1+p};q)_n }.
\end{equation}
From the dual orthogonality relations for $\bar p_n$ it follows that $\{V_{x,p}\mid x \in \N, p \in \Z\}$ is an orthonormal basis for $\ell^2(\N) \tensor \ell^2(\N)$, and we see that \eqref{eq:eigenvector V infty infty} is equivalent to
\[
e_{n_1} \tensor e_{n_2} = \sum_{x \in \N}e^{in_2\phi -in_1\psi-ix(\phi-\psi)} c_{x,|n_2-n_1|,n_1} V_{x,n_1}.
\]

Finally, we need to determine the actions of the $\mathcal A_q$-generators on $V_{x,p}$. Note that $\mathcal T(\de)$ leaves $H_p$ invariant, and from the commutation relations \eqref{eq:comm rel} we see that $\de (\ga \ga^*) = q^{-2} (\ga \ga^*) \de$, so $\mathcal T(\de) V_{x,p}$ is an eigenvector in $H_p$ of $\mathcal T(\ga \ga^*)$ for eigenvalue $q^{2x+2}$, which implies $\mathcal T(\de) V_{x,p} = C V_{x+1,p}$ for some $C \in \C$. To determine the value of $C$ we use $\de^*=\al$ and $\De(\al) = \al\tensor \al + \be \tensor \ga$, see \eqref{eq:*structure} and \eqref{eq:coprod},
\[
\begin{split}
C\langle V_{x+1,p}, e_0 \tensor e_p \rangle & = \langle \mathcal T(\de) V_{x,p}, e_0 \tensor e_p \rangle  = \langle V_{x,p}, \mathcal T(\al) e_0 \tensor e_p \rangle \\
&= - e^{i(\phi-\psi)}q^{p+1}\langle V_{x,p}, e_0 \tensor e_{p} \rangle.
\end{split}
\]
From $c_{x,p,0} = (-1)^x q^{x(p+1)} \sqrt{ (q^{2p+2};q)_\infty / (q^2;q^2)_x}$ it now follows that $C = \sqrt{ 1-q^{2x+2}}$. Using $\al\de = 1+ q^2 \ga \ga^*$, the action of $\al$ is obtained from the action of $\de$. For the action of $\ga$ we note that $\mathcal T(\ga)$ sends $H_p$ to $H_{p-1}$, and from $\ga(\ga \ga^*) = (\ga \ga^*) \ga$ it then follows that $\mathcal T(\ga) V_{x,p}$ is an eigenvector in $H_{p-1}$ of $\mathcal T(\ga \ga^*)$ for eigenvalue $q^{2x}$, so $\mathcal T(\ga) V_{x,p}= D V_{x,p-1}$ for some $D \in \C$. The value of $D$ can be computed similarly as above. De the action of $\be$ can be obtained from the action of $\ga$. We now obtained
\begin{equation} \label{eq:action on V infty infty}
\begin{split}
\mathcal T(\al) \, V_{x,p} &= \sqrt{1-q^{2x}} \, V_{x-1,p}, \\
\mathcal T(\be) \, V_{x,p} &= q^{x+1}\, V_{x,p+1},\\
\mathcal T(\ga) \, V_{x,p} &= q^x\, V_{x,p-1},\\
\mathcal T(\de) \, V_{x,p} &= \sqrt{1-q^{2x+2}} \, V_{x+1,p}.
\end{split}
\end{equation}
Comparing this with the actions of the generators $\al$, $\be$, $\ga$, $\de$ in the direct integral representation $\rho$, we see that the unitary operator $\La:\ell^2(\N) \tensor \ell^2(\N) \to L^2(0,2\pi) \tensor \ell^2(\N)$ defined by
\[
\La V_{x,p} = v_p \tensor e_x,
\]
intertwines $\mathcal T(X)$ with $\rho(X)$ for any $X \in \mathcal A_q$. This proves Theorem \ref{thm:tensor product decomp}.

\begin{remark}
The actions of $\al,\be,\ga,\de$ on the eigenvectors $V_{x,p}$ imply contiguous relations for the Clebsch-Gordan coefficients $c_{x,p,n}$, which in turn imply the following contiguous relations for Wall polynomials:
\[
\begin{split}
(1-a)p_n(q^x;a/q;q)&=(1-aq^n)p_n(q^x;a;q) -  a(1-q^n) p_{n-1}(q^x;a;q),\\
q^x p_n(q^x;aq;q) &= q^n(1-aq) \big[p_{n}(q^x;a;q) - p_{n+1}(q^x;a;q)\big],\\
(1-q^x) p_n(q^{x-1};a;q) &= (1-aq^{n+1})p_{n+1}(q^x;a;q) + aq^{n+1} p_n(q^x;a;q).
\end{split}
\]
These relations can of course also be proved directly. To obtain the the first relation, we expand $p_n(q^x;a/q;q)$ in terms of $p_n(q^x;a;q)$,
\[
p_n(q^x;a/q;q) = \sum_{k=0}^n c_k p_k(q^x;a;q).
\]
The coefficient $c_n$ can be found by comparing leading coefficients. Next we multiply both sides with $p_{m}(q^x;a;q)w(q^x;a;q)$, $0 \leq m \leq n-1$, and sum over  $x$ from $0$ to $\infty$. Using the orthogonality relation for the Wall polynomials on both sides, we find the value of $c_{n-1}$, and we find $c_k=0$ for $k \leq n-2$. The other contiguous relations are proved in the same way. Note that, having the contiguous relations, the actions of the generators on $V_{x,p}$ can be derived from them.
\end{remark}

\subsection{Special elements in $\mathcal A_q$}
For $\tau,\si \in \R$ the element $\rho_{\tau,\si} \in \mathcal A_q$ is defined by
\begin{equation} \label{def:rhotausigma}
\begin{split}
\rho_{\tau,\si}  = \frac12\Big(& \al^2 + \de^2 + q\ga^2 + q^{-1}\be^2 + i(q^{-\si}-q^\si) (q\de\ga+\be \al) \\ &-  i(q^{-\tau}-q^\tau) (\de\be+ q \ga\al) + (q^{-\si}-q^\si)(q^{-\tau}-q^\tau)\Big),
\end{split}
\end{equation}
and furthermore,
\begin{equation} \label{def:rhoinftysigma}
\begin{split}
\rho_{\tau,\infty} &= \lim_{\si \rightarrow \infty} 2q^{\tau+\si-1} \rho_{\tau,\si} =  iq^{\tau}(\de \ga + q^{-1}\be \al) + q^{-1}(1-q^{2\tau})\ga \be, \\
\rho_{\infty,\si} &= \lim_{\tau \rightarrow \infty} 2q^{\tau+\si-1} \rho_{\tau,\si} =  -iq^{\si}( q^{-1}\de \be + \ga \al) + q^{-1}(1-q^{2\si})\ga \be.
\end{split}
\end{equation}
Note that $\rho_{\tau,\si}^* = \rho_{\tau,\si}$. Observe also that $\rho_{\infty,\infty} = \lim_{\tau \to \infty} \rho_{\tau,\infty}=-\ga \ga^*$. The element $\rho_{\tau,\si}$, first introduced by Koornwinder, plays an important role in the harmonic analysis on $\mathcal A_q$, see e.g.~\cite{Koo93},\cite{Koe96}. In this paper we are mainly interested in the spectral analysis of $\mathcal T(\rho_{\tau,\si})$. The following elements of $\mathcal A_q$ will be useful. For $\tau,\si \in \R \cup \{\infty\}$ we define, cf.~\cite[Prop.~6.5]{Koe96},
\begin{equation} \label{eq:defalbegadetausi}
\begin{split}
\al_{\tau,\si} &= q^\hf \al -iq^{\si-\hf}\be +iq^{\tau+\hf} \ga + q^{\si+\tau-\hf} \de,\\
\be_{\tau,\si} &= -q^{\si+\hf} \al -iq^{-\hf} \be - iq^{\si+ \tau+\hf} \ga + q^{\tau-\hf} \de, \\
\ga_{\tau,\si} &= -q^{\tau+\hf} \al+iq^{\tau+\si-\hf}\be +iq^\hf \ga + q^{\si-\hf} \de, \\
\de_{\tau,\si} &= q^{\tau+\si+\hf} \al +iq^{\tau-\hf} \be -iq^{\si+\hf} \ga + q^{-\hf} \de,
\end{split}
\end{equation}
where $q^\infty =0$. We collect a few useful relations.\\
Adjoints:
\begin{equation} \label{eq:adjointstausi}
\al_{\tau,\si}^* = q \de_{\tau-1,\si-1}, \quad \be_{\tau,\si}^* =- \ga_{\tau-1,\si+1}, \quad \ga_{\tau,\si}^* = - \be_{\tau+1,\si-1}, \quad \de_{\tau,\si}^*= q^{-1} \al_{\tau+1,\si+1}.
\end{equation}
Commutation relations:
\begin{equation} \label{eq:commrelation1}
\begin{split}
\be_{\tau+1,\si-1} \ga_{\tau,\si} &= 2q^{\tau+\si}\rho_{\tau,\si} - q^{2\si-1}-q^{2\tau+1},\\
\ga_{\tau-1,\si+1} \be_{\tau,\si} &= 2q^{\tau+\si}\rho_{\tau,\si} - q^{2\si+1}-q^{2\tau-1},\\
\al_{\tau+1,\si+1} \de_{\tau,\si} & = 2q^{\tau+\si+1}\rho_{\tau,\si} + 1+q^{2\tau+2\si+2},\\
\de_{\tau-1, \si-1} \al_{\tau,\si} & = 2q^{\tau+\si-1}\rho_{\tau,\si} + 1+q^{2\tau+2\si-2},
\end{split}
\end{equation}
\begin{equation} \label{eq:commrelation2}
\begin{aligned}
\al_{\tau,\si} \rho_{\tau,\si} &= \rho_{\tau-1, \si-1}\al_{\tau,\si},& \be_{\tau,\si} \rho_{\tau,\si} &= \rho_{\tau-1,\si+1} \be_{\tau,\si}, \\
\ga_{\tau,\si} \rho_{\tau,\si} &= \rho_{\tau+1, \si-1} \ga_{\tau,\si}, & \de_{\tau,\si} \rho_{\tau,\si} &= \rho_{\tau+1, \si+1} \de_{\tau,\si},
\end{aligned}
\end{equation}
Decompositions:
\begin{equation} \label{eq:decomptau}
\begin{aligned}
\al_{\tau,\si}&= \al_{\tau,\infty} + q^\si \be_{\tau,\infty}, &
\be_{\tau,\si}&= \be_{\tau,\infty} - q^{\si} \al_{\tau,\infty},\\
\ga_{\tau,\si}&= \ga_{\tau,\infty} + q^\si \de_{\tau,\infty}, &
\de_{\tau,\si}&= \de_{\tau,\infty} - q^\si \ga_{\tau, \infty},
\end{aligned}
\end{equation}
\begin{equation} \label{eq:decompsi}
\begin{aligned}
\al_{\tau,\si}&= \al_{\infty,\si} + q^\tau \ga_{\infty,\si}, &
\be_{\tau,\si}&= \be_{\infty,\si} + q^{\tau} \de_{\infty,\si},\\
\ga_{\tau,\si}&= \ga_{\infty,\si} - q^\tau \al_{\infty,\si}, &
\de_{\tau,\si}&= \de_{\infty,\si} - q^\tau \be_{\infty,\si}.
\end{aligned}
\end{equation}
Now $\rho_{\tau,\si}$ can be expressed as
\begin{equation} \label{eq:rho si tau 2}
2\rho_{\tau,\si} = q^{-\tau-\si-1}(\al_{\tau+1,\infty}- q^{\si+1} \be_{\tau+1,\infty} )( \de_{\tau,\infty} - q^{\si} \ga_{\tau,\infty}) + q^{-\tau-\si-1} + q^{\tau+\si+1}.
\end{equation}
Coproduct: for $\la \in \R \cup \{\infty\}$
\begin{equation} \label{eq:coproducttausi}
\begin{split}
\De(\al_{\tau,\si}) &=  \frac{1}{1+q^{2\la+1}}  \left( q^{-\hf} \al_{\tau, \la+1} \tensor \al_{\la+1,\si} + q^\hf \be_{\tau, \la} \tensor \ga_{\la, \si} \right),\\
\De(\be_{\tau,\si}) &= \frac{1}{1+q^{2\la+1}}  \left( q^{-\hf} \al_{\tau,\la+1} \tensor \be_{\la+1,\si} + q^\hf \be_{\tau,\la} \tensor \de_{\la,\si} \right),\\
\De(\ga_{\tau,\si}) &= \frac{1}{1+q^{2\la+1}}  \left( q^{-\hf} \ga_{\tau,\la+1} \tensor \al_{\la+1,\si} + q^\hf \de_{\tau,\la} \tensor \ga_{\la,\si} \right),\\
\De(\de_{\tau,\si}) &= \frac{1}{1+q^{2\la+1}}  \left( q^{-\hf} \ga_{\tau,\la+1} \tensor \be_{\la+1,\si}+ q^\hf \de_{\tau,\la} \tensor \de_{\la ,\si}  \right).
\end{split}
\end{equation}
This gives us
\begin{equation} \label{eq:coproduct rho tau sigma}
\begin{split}
2\De(\rho_{\tau,\si}+&q^{-\tau-\si-1}+q^{\tau+\si+1})  = \\
\frac{ q^{-\tau - \si -1} }{(1+q^{2\la})^2} &\Big( q^{-1} \al_{\tau+1,\la+1}\ga_{\tau,\la+1}\tensor \al_{\la+1,\si+1}\be_{\la+1,\si}
 + q\, \be_{\tau+1,\la}\de_{\tau,\la}\tensor \ga_{\la,\si+1}\de_{\la,\si}\\
&+[2q^{\tau+\la+1}\rho_{\tau,\la}+q^{2\la+2\tau+2}+1] \tensor [2q^{\si+\la+1}\rho_{\la,\si}+q^{2\la+2\si+2}+1]\\
&+[2q^{\la+\tau+1} \rho_{\tau,\la+1}-q^{2\la+1}-q^{2\tau+1}] \tensor [2q^{\la+\si+1} \rho_{\la+1,\si}-q^{2\la+1}-q^{2\si+1}]\Big).
\end{split}
\end{equation}
Let us denote by $\mathcal A_q^{\tau,\si}$ the subalgebra of $\mathcal A_q$ generated by
\[
\al_{\tau+n,\si+m},\ \be_{\tau+n,\si+m},\ \ga_{\tau+n,\si+m},\ \de_{\tau+n,\si+m}, \qquad n,m \in \Z,
\]
and subalgebras $\mathcal A_q^{\tau,\infty}$, $\mathcal A_q^{\infty,\si}$ are defined similarly. Observe that $\mathcal A_q \supset \mathcal A_q^{\tau,\infty} \supset \mathcal A_q^{\tau,\si}$ and $\mathcal A_q \supset \mathcal A_q^{\infty,\si} \supset \mathcal A_q^{\tau,\si}$. This structure allows us to `build up' representations of $\mathcal A_q^{\tau,\si}$ from (simpler) representations of $\mathcal A_q$, $\mathcal A_q^{\tau,\infty}$ and $\mathcal A_q^{\infty,\si}$. It has been shown by Stokman in \cite{Sto03} that the algebra $\mathcal A_q^{\tau,\si}$ is closely related to the $SU(2)$ dynamical quantum group \cite {EtVar}, \cite{KoeRos}.

\section{$(\tau,\infty)$-Clebsch-Gordan coefficients} \label{sec:tauinfty-CGC}
In this section we compute Clebsch-Gordan coefficients corresponding to eigenvectors of $\mathcal T(\rho_{\tau,\infty})$.

\subsection{Eigenvectors of $\mathbf{\rho_{\tau,\infty}}$}
Eigenvectors of $\pi_\phi(\rho_{\tau,\infty})$ can be found in \cite[\S5]{KoeVer}. We use a different normalization here. To define the eigenvectors we need the Al-Salam--Carlitz polynomials, see \cite{ASC},\cite{KLS}, defined by
\begin{equation} \label{eq:Al-Salam--Carlitz pol}
U_n^{(a)}(x;q) = (-a)^n q^{\hf n(n-1)} \rphis{2}{1}{q^{-n}, x^{-1}}{0}{q, \frac{qx}{a}}.
\end{equation}
They satisfy the three-term recurrence relation
\[
x U_n^{(a)}(x;q) = U_{n+1}^\al(x;q) +(a+1)q^n U_n^{(a)}(x;q) -aq^{n-1}(1-q^n)U_{n-1}^{(a)}(x;q),
\]
and the orthogonality relation
\[
\begin{split}
\int_a^1 (qx,qx/a;q)_\infty U_n^{(a)}(x;q) U_m^{(a)}(x;q) \,d_qx =\de_{mn}\,(-a)^n (1-q) (q;q)_n (q;q)_\infty \te(a;q) q^{\hf n(n-1)}
\end{split}
\]
for $a<0$.
Since the polynomials $U_n^{(a)}$ are orthogonal on a countable set and complete, they also satisfy the dual orthogonality relations
\[
\sum_{n \in \N}  U_n^{(a)}(x;q) U_n^{(a)}(y;q)\frac{ q^{-\hf n(n-1)}}{ (-a)^n (q;q)_n } = \de_{xy} \frac{ (q;q)_\infty \te(a;q)}{|x|(qx,qx/a;q)_\infty},
\]
provided $a<0$ and $x,y \in aq^{\N} \cup q^\N$.

From the definition \eqref{def:rhoinftysigma} of $\rho_{\tau,\infty}$ and the actions \eqref{eq:repr} of the $\mathcal A_q$-generators we obtain
\begin{equation} \label{eq:actionrhotauinfty}
\pi_\phi(\rho_{\tau,\infty}) e_n = iq^\tau e^{i\phi} q^n \sqrt{1-q^{2n+2}}\, e_{n+1} - q^{2n}(1-q^{2\tau}) \, e_n -iq^\tau e^{-i\phi} q^{n-1} \sqrt{1-q^{2n}}\, e_{n-1}.
\end{equation}
We can diagonalize this using the Al-Salam--Carlitz polynomials. We define the function $m_{x,n}^{\tau,\infty}$ by
\[
m_{x,n}^{\tau,\infty} = i^n \bar{U}_n^{(-q^{2\tau})}(-x;q^2),
\]
where
\[
\bar U^{(a)}_n(x;q) = \sqrt{ \frac{q^{-\hf n(n-1)}|x|(qx,qx/a;q)_\infty}{ (-a)^n (q;q)_n(q;q)_\infty \te(a;q)} } \, U^{(a)}_n(x;q).
\]
The three-term recurrence relation for the Al-Salam--Carlitz polynomials translates to
\begin{equation} \label{eq:3term m tau infty}
\begin{split}
-x m_{x,n}^{\tau,\infty} = -iq^{n+\tau}\sqrt{1-q^{2n+2} }m_{x,n+1}^{\tau,\infty}+(1-q^{2\tau})q^{2n} m_{x,n}^{\tau,\infty}(x) + i q^{n-1+\tau}\sqrt{1-q^{2n} }m_{x,n-1}^{\tau,\infty}.
\end{split}
\end{equation}
Comparing this with \eqref{eq:actionrhotauinfty} we find the following result.
\begin{prop} \label{prop:eigenvector rho tau infty}
For $x \in -q^{2\N} \cup q^{2\tau+2\N}$ the vector
\[
v_{x}^{\tau,\infty} = \sum_{n \in \N} e^{in\phi} m_{x,n}^{\tau,\infty}\, e_n
\]
is an eigenvector of $\pi_\phi(\rho_{\tau,\infty})$ for eigenvalue $x$. Moreover, $\{v_{x}^{\tau,\infty} \ |\ x \in -q^{2\N} \cup q^{2\tau+2\N} \}$ is an orthonormal basis for $\ell^2(\N)$.
\end{prop}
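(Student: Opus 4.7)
The plan is to split the proof into two parts: (i) verify by termwise computation that each $v_x^{\tau,\infty}$ lies in the kernel of $\pi_\phi(\rho_{\tau,\infty})-x\cdot\mathrm{id}$, and (ii) deduce orthonormality and completeness of the family from the orthogonality of the Al-Salam--Carlitz polynomials recorded at the start of the section.

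For (i), expand $\pi_\phi(\rho_{\tau,\infty})v_x^{\tau,\infty}$ termwise using the action \eqref{eq:actionrhotauinfty}. Each basis vector $e_n$ is sent to a tridiagonal combination of $e_{n-1},e_n,e_{n+1}$ with prefactors $e^{\pm i\phi}$, and these phases absorb exactly into the $e^{in\phi}$ weights of $v_x^{\tau,\infty}$ under re-indexing. Collecting the coefficient of $e_m$ gives $e^{im\phi}$ times a linear combination of $m_{x,m-1}^{\tau,\infty},m_{x,m}^{\tau,\infty},m_{x,m+1}^{\tau,\infty}$; the requirement that this equal $x\,e^{im\phi}m_{x,m}^{\tau,\infty}$ is then recognized as the three-term recurrence \eqref{eq:3term m tau infty}, which has already been derived from the Al-Salam--Carlitz recurrence. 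The factor $i^n$ in the definition of $m_{x,n}^{\tau,\infty}$ is inserted precisely to convert the real Al-Salam--Carlitz recurrence into the complex one compatible with the purely imaginary off-diagonal entries of $\pi_\phi(\rho_{\tau,\infty})$ in the basis $\{e^{in\phi}e_n\}$.

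For (ii), the inner product $\langle v_x^{\tau,\infty},v_y^{\tau,\infty}\rangle$ collapses, using $|e^{in\phi}|=|i^n|=1$, to
\[
\sum_{n\in\N}\bar U_n^{(-q^{2\tau})}(-x;q^2)\,\bar U_n^{(-q^{2\tau})}(-y;q^2),
\]
which equals $\de_{xy}$ by the dual orthogonality displayed earlier, valid precisely on $-q^{2\N}\cup q^{2\tau+2\N}$. This simultaneously confirms $\ell^2$-convergence of the defining series and orthonormality. Completeness of $\{v_x^{\tau,\infty}\}$ as a Hilbert basis of $\ell^2(\N)$ follows from the companion (ordinary) orthogonality, i.e.\ from the fact that the functions $x\mapsto\bar U_n^{(-q^{2\tau})}(-x;q^2)$ form an orthonormal basis of the weighted $\ell^2$-space on the same spectral set; equivalently, the matrix with entries $e^{in\phi}m_{x,n}^{\tau,\infty}$ is unitary, which is exactly the required basis statement. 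The only delicate point is threading the $i^n$ and $e^{in\phi}$ phases consistently between \eqref{eq:actionrhotauinfty} and \eqref{eq:3term m tau infty}; beyond this bookkeeping no conceptual obstacle arises.
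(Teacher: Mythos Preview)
Your proposal is correct and follows exactly the same line as the paper's own proof: the eigenvector property is read off from matching the tridiagonal action \eqref{eq:actionrhotauinfty} against the recurrence \eqref{eq:3term m tau infty}, and orthonormality/completeness comes from the dual orthogonality relations for the Al-Salam--Carlitz polynomials. The paper states this in two sentences; your version simply spells out the phase bookkeeping and the unitarity interpretation more explicitly.
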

\begin{proof}
It follows from the action of $\rho_{\tau,\infty}$ on the standard orthonormal basis and from \eqref{eq:3term m tau infty} that $v_x^{\tau,\infty}$ is an eigenvector. Using the dual orthogonality relations for the Al-Salam--Carlitz polynomials we obtain orthogonality relations for the eigenvectors.
\end{proof}

We can determine the explicit actions of $\al_{\tau,\infty}, \be_{\tau,\infty}, \ga_{\tau,\infty}, \de_{\tau,\infty}$ on the eigenvectors $v_x^{\tau,\infty}$.
\begin{prop} \label{prop:actions on v tau infty}
For $x \in  -q^{2\N} \cup q^{2\tau+2\N}$,
\[
\begin{split}
\pi_\phi(\al_{\tau,\infty})\, v_{x}^{\tau,\infty} &= i e^{i\phi}q^\hf \sqrt{ 1+x} \, v_{x/q^2}^{\tau-1,\infty},\\
\pi_\phi(\be_{\tau,\infty})\, v_{x}^{\tau,\infty} &=i e^{-i\phi} q^{\tau-\hf} \sqrt{ 1-x q^{2-2\tau}}\, v_{x}^{\tau-1,\infty},\\
\pi_\phi(\ga_{\tau,\infty})\, v_{x}^{\tau,\infty} &= i e^{i\phi} q^{\tau+\hf} \sqrt{ 1-x q^{-2\tau}}\, v_{x}^{\tau+1,\infty},\\
\pi_\phi(\de_{\tau,\infty})\, v_{x}^{\tau,\infty} &=-i e^{-i\phi}q^{-\hf} \sqrt{ 1+x q^2}\, v_{x q^2}^{\tau+1,\infty},
\end{split}
\]
where $v^{\tau,\infty}_{-q^{-2}} = v^{\tau,\infty}_{q^{2\tau-2}} = 0$.
\end{prop}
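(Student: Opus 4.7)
My plan is to reduce the claim to a single coefficient comparison via a structural intertwining argument. First, I would establish the intertwining identities
\begin{align*}
\al_{\tau,\infty}\rho_{\tau,\infty}&=q^2\rho_{\tau-1,\infty}\al_{\tau,\infty}, &
\be_{\tau,\infty}\rho_{\tau,\infty}&=\rho_{\tau-1,\infty}\be_{\tau,\infty},\\
\ga_{\tau,\infty}\rho_{\tau,\infty}&=\rho_{\tau+1,\infty}\ga_{\tau,\infty}, &
\de_{\tau,\infty}\rho_{\tau,\infty}&=q^{-2}\rho_{\tau+1,\infty}\de_{\tau,\infty},
\end{align*}
obtained from \eqref{eq:commrelation2} by multiplying both sides by the appropriate power of $2q^{\tau+\si-1}$ and letting $\si\to\infty$. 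Together with the one-dimensionality of the $\pi_\phi(\rho_{\tau\pm 1,\infty})$-eigenspaces from Proposition~\ref{prop:eigenvector rho tau infty}, these identities force $\pi_\phi(\al_{\tau,\infty})v_x^{\tau,\infty}$ to be a scalar multiple of $v_{x/q^2}^{\tau-1,\infty}$, and analogously for the other three generators. The boundary values $x=-1$ and $x=q^{2\tau}$, where the target eigenvalue falls at the edge of the spectrum, are absorbed by the conventions $v_{-q^{-2}}^{\tau,\infty}=v_{q^{2\tau-2}}^{\tau,\infty}=0$ together with the vanishing of $\sqrt{1+x}$ or $\sqrt{1-xq^{-2\tau}}$.

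To determine the moduli of the four proportionality constants, I would use the quadratic identities obtained from \eqref{eq:commrelation1} in the same limit,
\[
\de_{\tau-1,\infty}\al_{\tau,\infty}=\rho_{\tau,\infty}+1,\qquad \ga_{\tau-1,\infty}\be_{\tau,\infty}=q\rho_{\tau,\infty}-q^{2\tau-1},
\]
together with the two analogous identities, combined with the adjoint relations \eqref{eq:adjointstausi} at $\si=\infty$, namely $\al_{\tau,\infty}^{*}=q\de_{\tau-1,\infty}$ and $\be_{\tau,\infty}^{*}=-\ga_{\tau-1,\infty}$. Writing $\pi_\phi(\al_{\tau,\infty})v_x^{\tau,\infty}=Av_{x/q^2}^{\tau-1,\infty}$, the standard adjoint argument produces $\pi_\phi(\de_{\tau-1,\infty})v_{x/q^2}^{\tau-1,\infty}=q^{-1}\bar A\,v_x^{\tau,\infty}$, and the first quadratic identity then gives $q^{-1}|A|^2=x+1$, so $|A|^2=q(1+x)$, matching the modulus in the claim. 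The other three moduli follow by the same device.

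Finally, the phase of $A$ is fixed by a single coefficient comparison. Since $\pi_\phi(\al_{\tau,\infty})=q^\hf\pi_\phi(\al)+iq^{\tau+\hf}\pi_\phi(\ga)$ depends on $\phi$ only through the $\pi_\phi(\ga)$-summand (which carries $e^{i\phi}$), one sees immediately that $A=e^{i\phi}A'$ with $A'$ independent of $\phi$. Comparing the $e_0$-coefficient on both sides of $\pi_\phi(\al_{\tau,\infty})v_x^{\tau,\infty}=A\,v_{x/q^2}^{\tau-1,\infty}$, and using the explicit values of $m_{x,0}^{\tau,\infty}$ and $m_{x,1}^{\tau,\infty}$ together with $U_1^{(a)}(y;q)=y-a-1$ and the $\te$-product identity $\te(-q^{2\tau-2};q^2)=q^{2\tau-2}\te(-q^{2\tau};q^2)$, pins the sign down to $A'=iq^\hf\sqrt{1+x}$. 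The $\ga_{\tau,\infty}$-action is obtained by an entirely analogous computation, and the $\be_{\tau,\infty}$- and $\de_{\tau,\infty}$-actions then come for free from the adjoint relations \eqref{eq:adjointstausi}. I expect the main obstacle to be careful bookkeeping of signs coming from the factors $(-1)^n$, $i^n$ and the square roots in \eqref{eq:defbarpn}; the structural reduction above confines all such sign-tracking to the single $n=0$ check.
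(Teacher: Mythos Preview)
Your proposal is correct and follows essentially the same strategy as the paper: use the intertwining relations \eqref{eq:commrelation2} (specialized to $\si=\infty$) to force each image to be a scalar multiple of the asserted eigenvector, and then pin down the scalar by a comparison at $e_0$. The paper does this slightly more economically: rather than splitting into a modulus computation via \eqref{eq:commrelation1} and a separate phase check requiring both $m_{x,0}^{\tau,\infty}$ and $m_{x,1}^{\tau,\infty}$, it computes the full constant in one step by taking the inner product with $e_0$ and moving the operator to the other side via the adjoint, e.g.\ $C\langle v_{xq^2}^{\tau+1,\infty},e_0\rangle=\langle v_x^{\tau,\infty},\pi_\phi(\de_{\tau,\infty}^*)e_0\rangle$; since $\pi_\phi(\al)e_0=0$, this needs only the explicit value of $m_{x,0}^{\tau,\infty}$. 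Your modulus argument is then redundant, though harmless, and the remaining three actions are recovered from \eqref{eq:commrelation1} and \eqref{eq:adjointstausi} exactly as you indicate.
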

\begin{proof}
From $\de_{\tau,\infty} \rho_{\tau,\infty} = q^{-2} \rho_{\tau+1, \infty} \de_{\tau,\infty}$, see \eqref{eq:commrelation2}, we obtain $\pi_\phi(\de_{\tau,\infty}) v_x^{\tau,\infty} = C v_{x q^2}^{\tau+1,\infty}$ for some constant $C$. Using \eqref{eq:adjointstausi} and \eqref{eq:decompsi} with $\si=\infty$, the value of $C$ follows from
\[
\begin{split}
C \langle v_{x q^2}^{\tau+1,\infty}, e_0 \rangle &=  \langle \pi_\phi(\de_{\tau,\infty} ) v_{x}^{\tau,\infty}, e_0 \rangle = \langle v_{x}^{\tau+1,\infty}, \pi_\phi(\de_{\tau,\infty}^*) e_0 \rangle \\
&= \langle v_{x}^{\tau,\infty}, \pi_\phi(q^{-\hf} \al + iq^{\tau+\hf} \ga) e_0 \rangle = -ie^{-i\phi}q^{\tau+\hf}\,\langle v_{x}^{\tau,\infty}, e_0 \rangle.
\end{split}
\]
Using $\al_{\tau+1,\infty} \de_{\tau,\infty}=q^2 \rho_{\tau,\infty} +1$, see \eqref{eq:commrelation1}, we obtain the action of $\al_{\tau,\infty}$ from the action of $\de_{\tau,\infty}$. The actions of $\be_{\tau,\infty}$ and $\ga_{\tau,\infty}$ can be obtained in the same way, using $\be_{\tau,\infty} \rho_{\tau,\infty} = \rho_{\tau-1,\infty} \beta_{\tau,\infty}$.
\end{proof}
\begin{remark}
Because of the shift in the parameter $\tau$, the actions of $\al_{\tau,\infty}, \be_{\tau,\infty}, \ga_{\tau,\infty}, \de_{\tau,\infty}$ are different from the actions of the generators $\al$, $\be$, $\ga$, $\de$ on the standard basis of $\ell^2(\N)$: here we have `dynamic' raising and lowering operators.
\end{remark}

For later references we also need an orthonormal basis on which $\pi_\phi(\rho_{\infty,\si})$ acts diagonally. This basis is $\{v_{x}^{\infty,\si}\ |\ x \in -q^{2\N} \cup q^{2\si + 2 \N} \}$, where
\begin{gather*}
v_{x}^{\infty,\si} = \sum_{n \in \N} e^{-in\phi} m_{x,n}^{\infty,\si} \, e_n,\\
m_{x,n}^{\infty,\si} = i^n q^{-n\si} q^{-\frac12 n(n-1)}\sqrt{ \frac{|x|(-q^2x,xq^{2-2\si};q^2)_\infty }{(q^2;q^2)_n (q^2,-q^{2\si},-q^{2-2\si};q)_\infty}}\ U_n^{(-q^{2\si})}(-x;q^2).
\end{gather*}
The vector $v_{x}^{\infty,\si}$ is an eigenvector of $\pi_\phi(\rho_{\infty, \si})$ for eigenvalue $x$, and
\begin{equation} \label{eq:actions on v infty si}
\begin{split}
\pi_\phi(\al_{\infty,\si})\, v_{x}^{\infty,\si} &= i e^{-i\phi}q^\hf \sqrt{ 1+ x }\, v_{x/q^2 }^{\infty,\si-1},\\
\pi_\phi(\be_{\infty,\si})\, v_{x}^{\infty,\si} &= i e^{-i\phi} q^{\si+\hf} \sqrt{ 1- x q^{-2\si} }\, v_{x}^{\infty,\si+1},\\
\pi_\phi(\ga_{\infty,\si})\, v_{x}^{\infty,\si} &= i e^{i\phi} q^{\si-\hf} \sqrt{ 1- x q^{2-2\si} }\, v_{x}^{\infty,\si-1},\\
\pi_\phi(\de_{\infty,\si})\, v_{x}^{\infty,\si} &= -i e^{i\phi} q^{-\hf}\sqrt{ 1+ x q^2 }\, v_{xq^2}^{\infty,\si+1}.
\end{split}
\end{equation}

\subsection{Clebsch-Gordan coefficients}
Next we consider the action of $\rho_{\tau,\infty}$ in the tensor product representation $\mathcal T$, see \eqref{def:representation T}. We will need the big $q$-Laguerre polynomials, a subclass of the big $q$-Jacobi polynomials \cite{AA},\cite{KLS}. The big $q$-Laguerre polynomials are defined by
\begin{equation} \label{eq:bigqLaguerre}
L_n(x;a,b;q) = \rphis{3}{2}{q^{-n},0,x}{aq,bq}{q,q},\qquad n \in \N.
\end{equation}
For $0<a<q^{-1}$ and $b<0$ they satisfy the orthogonality relations
\[
\begin{split}
\int_{bq}^{aq} &\frac{ (x/a,x/b;q)_\infty }{ (x;q)_\infty} L_m(x;a,b;q) L_n(x;a,b;q)\, d_qx =\\
&\de_{mn}\, aq(1-q) \frac{ (q;q)_\infty \te(b/a;q) }{ (aq,bq;q)_\infty } \frac{ (q;q)_n }{ (aq,bq;q)_n} (-abq^2)^n q^{\hf n(n-1)}.
\end{split}
\]
Note that $a$ and $b$ can be interchanged, so the orthogonality relations are also valid for $a<0$ and $0 < b < q^{-1}$. From \eqref{eq:sum p<->-p} it follows that, for $p \in \N$, the polynomials $L_n$ satisfy the identity
\begin{equation}  \label{eq:n<->n-p}
(q^{1-p};q)_\infty L_n(x;q^{-p},b;q) =
\begin{cases}
\dfrac{(q^{1+p};q)_\infty (q^{-n},x;q)_p }{ (bq;q)_p }q^p L_{n-p}(xq^{p};q^{p},bq^p;q),& 0 \leq p \leq n,\\
0, & p > n,
\end{cases}
\end{equation}
which will be useful later on. We define orthonormal functions related to the big $q$-Laguerre polynomials by
\[
\bar L_n(x;a,b;q) = (-abq^2)^{-n/2} q^{-\frac14 n(n-1)} \sqrt{ \frac{|x|(x/a,x/b,aq,bq;q)_\infty (aq,bq;q)_n}{ |aq| (q,x;q)_\infty \te(b/a;q)(q;q)_n} } L_n(x;a,b;q).
\]
These functions satisfy the orthogonality relations and dual orthogonality relations
\begin{gather*}
\sum_{x \in aq^{1+\N} \cup bq^{1+\N}} \bar L_m(x;a,b;q) \bar L_n(x;a,b;q)= \de_{mn},\\
\sum_{ n \in \N} \bar L_m(x;a,b;q) \bar L_n(y;a,b;q) = \de_{xy},
\end{gather*}
provided $x,y \in aq^{1+\N} \cup bq^{1+\N}$ in the last relation. The three-term recurrence relation for $L_n$ gives the relations
\begin{equation} \label{eq:ttrbigqLaguerre}
\begin{split}
\frac{x}{bq} \bar L_n(x)= &[q^n(a/b+a+1)-aq^{2n}(1+q)]\, \bar L_n(x)\\
&-q^{n/2} \sqrt{-a/b\, (1-q^{n+1})(1-aq^{n+1})(1-bq^{n+1})}\, \bar L_{n+1}(x)\\
&- q^{(n-1)/2} \sqrt{-a/b\,  (1-q^{n})(1-aq^{n})(1-bq^{n})}\, \bar L_{n-1}(x).
\end{split}
\end{equation}
\*\\

We now consider the operator $\mathcal T(\rho_{\tau,\infty})$. Taking the appropriate limit in \eqref{eq:coproduct rho tau sigma} we find
\[
\begin{split}
\De(\rho_{\tau,\infty}) =& \rho_{\tau,\infty} \tensor 1 +q^{-1} [(1+q^2 ) \rho_{\tau,\infty} + (1-q^{2\tau})] \tensor \ga \be\\
& +i \be_{\tau+1,\infty} \de_{\tau,\infty} \tensor \de \ga - iq^{-2} \al_{\tau+1,\infty} \ga_{\tau,\infty} \tensor \be \al.
\end{split}
\]
Using Proposition \ref{prop:actions on v tau infty} and \eqref{eq:repr} we see that $\mathcal T(\rho_{\tau,\infty})$ acts `nicely' on the $\ell^2(\N)\tensor \ell^2(\N)$-basis  \mbox{$\{v_{y}^{\tau,\infty} \tensor e_n \mid y \in -q^{2\N} \cup q^{2\tau+2\N}, n \in \N\}$};
\begin{equation} \label{eq:Trhotauinfty}
\begin{split}
\mathcal T(\rho_{\tau,\infty})\, v_{y}^{\tau,\infty} \tensor e_n = &[y(1-q^{2n}-q^{2n+2}) -q^{2n}(1 -q^{2\tau})]\, v_{y}^{\tau,\infty} \tensor e_n \\
&+\ ie^{i(\psi-2\phi)} q^{n+\tau} \sqrt{ (1-q^{2n+2})(1+yq^2)(1-y q^{2-2\tau})} \, v_{yq^2}^{\tau,\infty} \tensor e_{n+1} \\
&-\ i e^{-i(\psi-2\phi)} q^{n+\tau-1} \sqrt{ (1-q^{2n})(1+y )(1-y q^{-2\tau})}\, v_{y/ q^{2}}^{\tau,\infty} \tensor e_{n-1}.
\end{split}
\end{equation}
We can identity a restriction of $\mathcal T(\rho_{\tau,\infty})$ with the Jacobi operator corresponding to the big $q^2$-Laguerre polynomials. We define, for $y\in -q^{2\Z} \cup q^{2\tau + 2\Z}$, the Clebsch-Gordan coefficients
\[
c_{x,y,n}^{\tau,\infty} = \big(\sgn(y)i\big)^n\bar L_n(-xyq^{2-2\tau} ; -y, y q^{-2\tau};q^2).
\]
\begin{prop} \label{prop:eigenvector V tau infty}
For $x \in -q^{2\N}\cup q^{2\tau+2\N}$ and $y \in -q^{2\Z}\cup q^{2\tau+2\Z}$ the vector
\[
V_{x,y}^{\tau,\infty} =
\sum_{n \in \N} e^{in(\psi-2\phi)} c_{x,y,n}^{\tau,\infty} \, v_{yq^{2n}}^{\tau,\infty} \tensor e_n,
\]
is an eigenvector of $\mathcal T(\rho_{\tau,\infty})$ for eigenvalue $x$, using the conventions $v_{\la q^{-2n}}^{\tau,\infty} = e_{-n}=0$ for $n \in \N_{\geq 1}$ and $\la \in \{-1,q^{2\tau}\}$. Furthermore,
\[
\left\{V_{x,y}^{\tau,\infty} \mid x \in -q^{2\N}\cup q^{2\tau+2\N}, y \in -q^{2\Z} \cup q^{2\tau + 2\Z}  \right\}
\]
is an orthonormal basis for $\ell^2(\N) \tensor \ell^2(\N)$, i.e.
\[
\left\langle V_{x_1,y_1}^{\tau,\infty}, V_{x_2,y_2}^{\tau,\infty} \right\rangle_{\ell^2(\N)^{\tensor 2} } = \de_{x_1x_2} \de_{y_1y_2}.
\]
\end{prop}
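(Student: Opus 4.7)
The statement splits into two parts, which I would treat separately.

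For the eigenvalue equation, apply $\mathcal T(\rho_{\tau,\infty})$ termwise to the series $V_{x,y}^{\tau,\infty}$ using the tridiagonal action \eqref{eq:Trhotauinfty} (with $y$ replaced by $yq^{2n}$). Each output basis vector $v_{yq^{2n}}^{\tau,\infty}\tensor e_n$ collects exactly three contributions: the diagonal term of the $n$-th summand and the off-diagonals of the $(n\pm 1)$-st summands. After extracting the common phase $e^{in(\psi-2\phi)}$, the condition that the total coefficient equals $x\,c_{x,y,n}^{\tau,\infty}$ reduces to a three-term recurrence in $n$ for the $c_{x,y,n}^{\tau,\infty}$. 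The prefactor $(\sgn(y)i)^n$ in the definition is precisely what turns the imaginary units of the raising and lowering coefficients in \eqref{eq:Trhotauinfty} into real signs, so that the recurrence becomes exactly the orthonormal big $q^2$-Laguerre recurrence \eqref{eq:ttrbigqLaguerre} applied to $\bar L_n(\xi;-y,yq^{-2\tau};q^2)$ with argument $\xi=-xyq^{2-2\tau}$. Since $\xi/(bq^2)=-x$, the eigenvalue is $x$ as claimed.

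For the orthonormal basis property, use that $\{v_{y'}^{\tau,\infty}\tensor e_m\mid y'\in -q^{2\N}\cup q^{2\tau+2\N},\ m\in\N\}$ is an orthonormal basis of $\ell^2(\N)\tensor\ell^2(\N)$ by Proposition \ref{prop:eigenvector rho tau infty}. Expanding $\langle V_{x_1,y_1}^{\tau,\infty},V_{x_2,y_2}^{\tau,\infty}\rangle$ in this basis, the factors $\de_{n_1n_2}$ and $\de_{y_1q^{2n_1},y_2q^{2n_2}}$ force $n_1=n_2$ and $y_1=y_2$; for $y_1=y_2=y$ the phases and the unit modulus of $(\sgn(y)i)^n$ collapse, leaving $\sum_n\bar L_n(-x_1yq^{2-2\tau})\bar L_n(-x_2yq^{2-2\tau})=\de_{x_1x_2}$, which is the dual orthogonality of the big $q^2$-Laguerre polynomials. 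For completeness, fix a basis element $v_y^{\tau,\infty}\tensor e_m$ and observe that only $V_{x,y'}^{\tau,\infty}$ with $y'=yq^{-2m}$ has nonzero inner product with it, and that inner product equals $e^{-im(\psi-2\phi)}\overline{c_{x,y',m}^{\tau,\infty}}$. The Parseval identity $\sum_x|c_{x,y',m}^{\tau,\infty}|^2=1$ then follows from the ordinary orthogonality of the big $q^2$-Laguerre polynomials summed over the spectral variable, giving the desired expansion.

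The main obstacle is handling the degenerate cases. When $y\in -q^{2\Z}\cup q^{2\tau+2\Z}$ has negative \emph{index} $k$, the vectors $v_{yq^{2n}}^{\tau,\infty}$ vanish by the stated convention for $n<-k$, so the series in $V_{x,y}^{\tau,\infty}$ effectively begins at $n=-k$; one must check that $c_{x,y,n}^{\tau,\infty}$ vanishes for exactly these indices and that the orthogonality relations still apply with parameters that lie outside the classical range $0<a<q^{-1}$, $b<0$. This is precisely what identity \eqref{eq:n<->n-p} (with $q\to q^2$ and the appropriate parameter shift) accomplishes: it shows vanishing on the prescribed range and re-expresses the remaining $c_{x,y,n}^{\tau,\infty}$ as a big $q^2$-Laguerre polynomial with shifted parameters in the classical range, where the recurrence and orthogonality are directly applicable. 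Once this bookkeeping is done, the proof is a direct translation between \eqref{eq:Trhotauinfty} and the recurrence and orthogonality relations for $\bar L_n$ recalled at the start of the section.
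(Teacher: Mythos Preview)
Your proposal is correct and follows essentially the same route as the paper: match the tridiagonal action \eqref{eq:Trhotauinfty} to the orthonormal big $q^2$-Laguerre recurrence \eqref{eq:ttrbigqLaguerre}, and use identity \eqref{eq:n<->n-p} to handle the case where the label $y=\la q^{2p}$ has $p<0$ by passing to shifted parameters in the admissible range. The paper's own proof is terser---it simply says the argument parallels Proposition~\ref{prop:eigenvector rho tau infty} and then records the $p\geq 0$ and $p<0$ cases---but your more explicit treatment of the orthonormal-basis part via the dual and direct orthogonality relations of $\bar L_n$ is exactly the intended content of that reference.
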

\begin{proof}
The proof is very similar to the proof of Proposition \ref{prop:eigenvector rho tau infty}.
We consider the action of $\mathcal T(\rho_{\tau,\infty})$ on basis vectors $v_y^{\tau,\infty}\tensor e_n$. Write $y = \la q^{2m}$ with $\la \in \{-1,q^{2\tau}\}$, $m \in \N$, and let $p = m-n$. For $p\geq 0$ the result follows from comparing \eqref{eq:Trhotauinfty} with the three-term recurrence relation \eqref{eq:ttrbigqLaguerre}. For $p< 0$ the action \eqref{eq:Trhotauinfty} of $\mathcal T(\rho_{\tau,\infty})$ actually corresponds to the three-term recurrence relation of $\bar L_m(\,\cdot\, ; q^{-2p}, -\la^2 q^{-2\tau};q^2)$, up to a phase factor. In this case we obtain
\[
\sum_{m \in \N} (\sgn(y)i)^{m-p} e^{i(m-p)(\psi-2\phi)} \bar L_m(-\la x q^{2-2\tau} ; q^{-2p}, -\la^2 q^{-2\tau};q^2) \, v_{\la q^{2m}}^{\tau,\infty} \tensor e_{m-p}
\]
as an eigenvector for eigenvalue $x$. Using the conventions $v_{\la q^{-2n}}^{\tau,\infty} = e_{-n}=0$ for $n \in \N_{\geq 1}$ and identity \eqref{eq:n<->n-p}, it follows that this eigenvector coincides with $V_{x,y}^{\tau,\infty}$.
\end{proof}
\begin{remark}
In the limit $\tau \to \infty$ `half' of the spectrum of $\rho_{\tau,\infty}$, namely the part $q^{2\tau+2\N}$, vanishes. For $x \in -q^{2\N}$ and $y \in -q^{2\Z}$ (both corresponding to the remaining part of the spectrum), we have
\[
\lim_{\tau \to \infty} L_n(-xy q^{2-2\tau};-y,yq^{-2\tau};q^2) = p_n(-x;-y;q^2),
\]
which follows directly from the $q$-hypergeometric expressions of both functions. Furthermore, in this limit the orthogonality relations for the big $q^2$-Laguerre polynomials $L_n(-xy q^{2-2\tau};-y,yq^{-2\tau};q^2)$ go over into the orthogonality relations for the Wall polynomials $p_n(-x;-y;q^2)$ (at least formally). In this way we see that the limit of a Clebsch-Gordan coefficient for $\rho_{\tau,\infty}$ is a Clebsch-Gordan coefficients for $\rho_{\infty,\infty} = -\ga \ga^*$ \eqref{eq:Clebsch-Gordan inftyinfty}, as expected:
\[
\lim_{\tau \to \infty} c^{\tau,\infty}_{x,-q^{2p},n} = c_{-x,p,n}.
\]
\end{remark}

Similarly as for $v_x^{\tau,\infty}$ we can determine the actions of $\al_{\tau,\infty}$, $\be_{\tau,\infty}$, $\ga_{\tau,\infty}$, $\de_{\tau,\infty}$ on the vectors $V_{x,y}^{\tau,\infty}$.
\begin{prop} \label{prop:T action V tau infty}
For $x \in -q^{2\N} \cup q^{2\tau + 2\N}$ and $y \in -q^{2\Z} \cup q^{2\tau+2\Z}$,
\[
\begin{split}
\mathcal T(\al_{\tau,\infty})\, V_{x,y}^{\tau,\infty} &= i e^{i(\psi-\phi)}q^\hf \sqrt{1+x }\, V_{x/q^2,y}^{\tau-1,\infty},\\
\mathcal T(\be_{\tau,\infty})\, V_{x,y}^{\tau,\infty} &= i e^{i(\phi-\psi)}q^{\tau-\hf} \sqrt{1-x q^{2-2\tau} }\, V_{x,y/q^2}^{\tau-1,\infty},\\
\mathcal T(\ga_{\tau,\infty})\, V_{x,y}^{\tau,\infty} &= i e^{i(\psi-\phi)}q^{\tau+\hf} \sqrt{1-x q^{-2\tau} }\, V_{x,yq^2}^{\tau+1,\infty}, \\
\mathcal T(\de_{\tau,\infty})\, V_{x,y}^{\tau,\infty} &= -i e^{i(\phi-\psi)} q^{-\hf} \sqrt{1+xq^2 }\, V_{xq^2,y}^{\tau+1,\infty}.
\end{split}
\]
\end{prop}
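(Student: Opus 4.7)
The plan is to adapt the strategy of Proposition \ref{prop:actions on v tau infty}: combine a commutation-relation argument in $\mathcal A_q$ with a one-coefficient inner-product computation. The new complication in the tensor product setting is that the $\mathcal T(\rho_{\tau',\infty})$-eigenspaces are infinite dimensional, so pinning down the $y$-label requires an additional chain-invariance step.

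Focusing first on $\de_{\tau,\infty}$, I would take the $\si \to \infty$ limit of the relevant identity in \eqref{eq:commrelation2} to obtain $\de_{\tau,\infty}\rho_{\tau,\infty} = q^{-2}\rho_{\tau+1,\infty}\de_{\tau,\infty}$, so $\mathcal T(\de_{\tau,\infty})V_{x,y}^{\tau,\infty}$ lies in the $xq^2$-eigenspace of $\mathcal T(\rho_{\tau+1,\infty})$. Next, introduce
\[
H_y^\tau = \overline{\operatorname{span}}\{v_{yq^{2n}}^{\tau,\infty}\tensor e_n \mid n \in \N\},
\]
which contains $V_{x,y}^{\tau,\infty}$. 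A direct computation from \eqref{eq:defalbegadetausi} and \eqref{eq:coprod} (or, equivalently, the $\la \to \infty$ limit of \eqref{eq:coproducttausi}) gives $\De(\de_{\tau,\infty}) = -iq^{-1}\ga_{\tau,\infty}\tensor\be + \de_{\tau,\infty}\tensor\de$. Using Proposition \ref{prop:actions on v tau infty} together with \eqref{eq:repr}, each summand maps $v_{yq^{2n}}^{\tau,\infty}\tensor e_n$ into $H_y^{\tau+1}$: the first preserves both $n$ and the chain index $yq^{2n}$, while the second raises them both by one step simultaneously. Since the intersection of the $xq^2$-eigenspace with $H_y^{\tau+1}$ is spanned by the single vector $V_{xq^2,y}^{\tau+1,\infty}$, this forces $\mathcal T(\de_{\tau,\infty})V_{x,y}^{\tau,\infty} = C\, V_{xq^2,y}^{\tau+1,\infty}$ for some constant $C$.

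To pin down $C$, I would pair both sides with $v_y^{\tau+1,\infty}\tensor e_0$, which appears only at level $n=0$ of $V_{xq^2,y}^{\tau+1,\infty}$. Since $\de_{\tau,\infty}^* = q^{-1}\al_{\tau+1,\infty}$ by \eqref{eq:adjointstausi}, and since an analogous coproduct computation gives $\De(\al_{\tau+1,\infty}) = \al_{\tau+1,\infty}\tensor\al + iq\be_{\tau+1,\infty}\tensor\ga$, the first term annihilates $e_0$ through $\pi_\psi(\al)e_0 = 0$, and only the $\be_{\tau+1,\infty}\tensor\ga$-term contributes; by Proposition \ref{prop:actions on v tau infty} it evaluates to a scalar multiple of $v_y^{\tau,\infty}\tensor e_0$. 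Comparing with $c_{xq^2,y,0}^{\tau+1,\infty}$ reduces $C$ to an explicit ratio of normalized big $q^2$-Laguerre values $\bar L_0$ at two related points, which should simplify to the claimed $C = -ie^{i(\phi-\psi)}q^{-\hf}\sqrt{1+xq^2}$.

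The remaining three actions follow with no new ideas. The identity $\al_{\tau,\infty}\de_{\tau-1,\infty} = q^2 \rho_{\tau-1,\infty} + 1$ (the limiting form of the first line of \eqref{eq:commrelation1} at $\si=\infty$) yields the action of $\al_{\tau,\infty}$ directly from that of $\de_{\tau-1,\infty}$. For $\be_{\tau,\infty}$ and $\ga_{\tau,\infty}$ I would repeat the argument of the second paragraph: the relations $\be_{\tau,\infty}\rho_{\tau,\infty} = \rho_{\tau-1,\infty}\be_{\tau,\infty}$ and $\ga_{\tau,\infty}\rho_{\tau,\infty} = \rho_{\tau+1,\infty}\ga_{\tau,\infty}$ (limits of \eqref{eq:commrelation2}) fix the $x$-label, while the analogous coproducts send $H_y^\tau$ to $H_{y/q^2}^{\tau-1}$ and $H_{yq^2}^{\tau+1}$ respectively, reducing each action to $V_{x,y/q^2}^{\tau-1,\infty}$ and $V_{x,yq^2}^{\tau+1,\infty}$ up to a scalar determined by a similar inner product with $v_{y/q^2}^{\tau-1,\infty}\tensor e_0$ resp.\ $v_{yq^2}^{\tau+1,\infty}\tensor e_0$. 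The main obstacle throughout is bookkeeping: all individual steps are elementary, but carrying the square-root normalization constants of $\bar L_n$ (from the formulas preceding \eqref{eq:ttrbigqLaguerre}) through the inner-product evaluation requires care to land exactly on the prefactors displayed in the proposition.
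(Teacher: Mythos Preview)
Your proposal is correct and follows essentially the same strategy as the paper's proof: compute the coproduct of one generator, combine chain-invariance with the commutation relation \eqref{eq:commrelation2} to reduce to a single unknown scalar, fix that scalar by an inner product against $v^{\tau',\infty}_{y'}\tensor e_0$ using the adjoint relations \eqref{eq:adjointstausi}, and then use a product identity from \eqref{eq:commrelation1} to obtain a second generator for free. The only differences are cosmetic: the paper starts with $\be_{\tau,\infty}$ and derives $\ga_{\tau,\infty}$ from $\be_{\tau+1,\infty}\ga_{\tau,\infty}=q\rho_{\tau,\infty}-q^{2\tau+1}$, whereas you start with $\de_{\tau,\infty}$ and derive $\al_{\tau,\infty}$ from $\al_{\tau+1,\infty}\de_{\tau,\infty}=q^2\rho_{\tau,\infty}+1$; also, the paper leaves the chain-invariance implicit while you spell it out via the subspaces $H_y^{\tau}$.
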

\begin{proof}
Let us calculate the action of $\be_{\tau,\infty}$. First observe that
\[
\De(\be_{\tau,\infty}) =  \be_{\tau, \infty} \tensor \de - iq^{-1}\,\al_{\tau,\infty}\tensor \be
\]
by \eqref{eq:coproducttausi} and \eqref{eq:defalbegadetausi}. Using Proposition \ref{prop:actions on v tau infty} and \eqref{eq:repr} this gives
\[
\mathcal T(\be_{\tau,\infty})\, v_y^{\tau,\infty} \tensor e_n = c_1 v_y^{\tau-1} \tensor e_{n+1} + c_2 v_{y/q^2}^{\tau-1}\tensor e_n,
\]
for certain coefficients $c_{j}$, $j=1,2$. Since $\be_{\tau,\infty} \rho_{\tau,\infty} = \rho_{\tau-1,\infty} \be_{\tau,\infty}$ by \eqref{eq:commrelation2}, we conclude that $\mathcal T(\be_{\tau,\infty}) V_{x,y}^{\tau,\infty} = C\, V_{x,y/q^2}^{\tau-1,\infty}$, where the value of $C$ still needs to be determined. To determine this value we use
\[
\De(\be_{\tau,\infty}^*) = -\De(\ga_{\tau-1,\infty}) = -\ga_{\tau-1,\infty}\tensor \al -iq\,\de_{\tau-1,\infty}\tensor \ga,
\]
see \eqref{eq:adjointstausi} and \eqref{eq:coproducttausi}, then
\[
\begin{split}
C \langle V_{x,y/q^2}^{\tau-1,\infty}, v_{y/q^2}^{\tau-1,\infty}\tensor e_0 \rangle &= -\langle V_{x,y}^{\tau,\infty}, \mathcal T(\ga_{\tau-1,\infty})\, v_{y/q^2}^{\tau-1,\infty}\tensor e_0 \rangle \\
& = e^{i(\phi-\psi)} q^{\frac12} \sqrt{1+y}\, \langle V_{x,y}^{\tau,\infty}, v_{y}^{\tau,\infty} \tensor e_0 \rangle.
\end{split}
\]
The value of $C$ now follows from the explicit expression for $c_{x,y,0}^{\tau,\infty}$. From $\be_{\tau+1,\infty} \ga_{\tau,\infty} = q \rho_{\tau,\infty}- q^{2\tau+1}$ we obtain the action of $\ga_{\tau,\infty}$. The actions of $\al_{\tau,\infty}$ and $\de_{\tau,\infty}$ are obtained in a similar way.
\end{proof}
The action of $\al_{\tau,\infty}$ implies the following contiguous relation for big $q$-Laguerre polynomials:
\[
(1-\frac{x}{bq}) L_n(x;a,bq;q) = (1-\frac1{bq})(1-aq^{n+1}) L_{n+1}(x;a,b;q) - \frac{a}{b}(1-bq) q^n L_n(x;a,b;q).
\]
The action of $\ga_{\tau,\infty}$ implies the same contiguous relation with $a$ and $b$ interchanged.\\

We can also find eigenvectors of $\mathcal T(\rho_{\infty,\si})$. Define for $x \in -q^{2\N}\cup q^{2\si+2\N}$, $y \in -q^{2\Z} \cup q^{2\si+2\Z}$, $n \in \N$, the Clebsch-Gordan coefficients by
\[
c_{x,n,y}^{\infty,\si} = \big(\sgn(y)i\big)^n\bar L_n(-xyq^{2-2\si} ; -y, y q^{-2\si};q^2),
\]
then for the vector
\[
V_{x,y}^{\infty,\si} =
\sum_{n \in \N} e^{in(\psi-2\phi)} c_{x,n,y}^{\infty,\si} \, e_n \tensor v_{yq^{2n}}^{\infty,\si},
\]
is an eigenvector of $\mathcal T(\rho_{\tau,\infty})$ for eigenvalue $x$. Furthermore,
\begin{equation} \label{eq:T action V infty si}
\begin{split}
\mathcal T(\al_{\infty,\si})\, V_{x,y}^{\infty,\si} &= i e^{i(\phi-\psi)}q^\hf \sqrt{1+x }\, V_{x/q^2,y}^{\infty,\si-1},\\
\mathcal T(\be_{\infty,\si})\, V_{x,y}^{\infty,\si} &= i q^{\si+\hf} \sqrt{1-x q^{-2\si} }\, V_{x,y/q^2}^{\infty,\si+1},\\
\mathcal T(\ga_{\infty,\si})\, V_{x,y}^{\infty,\si} &= i q^{\si-\hf} \sqrt{1-x q^{2-2\si} }\, V_{x,yq^2}^{\infty,\si-1}, \\
\mathcal T(\de_{\infty,\si})\, V_{x,y}^{\infty,\si} &= -i e^{i(\psi-\phi)} q^{-\hf} \sqrt{1+xq^2 }\, V_{xq^2,y}^{\infty,\si+1}.
\end{split}
\end{equation}

\section{$(\tau,\si)$-Clebsch-Gordan coefficients} \label{sec:tausi-CGC}
We determine the Clebsch-Gordan coefficients corresponding to (generalized) eigenvectors of $\mathcal T(\rho_{\tau,\si})$.

\subsection{Eigenvectors of $\rho_{\tau,\si}$}
We first give eigenvectors of $\pi_{\phi}(\rho_{\tau,\si})$. These eigenvectors, with a different normalization, can be found in \cite[\S6]{KoeVer}. To define the eigenvectors we need the Al-Salam--Chihara polynomials \cite{ASCh}. These polynomials are Askey-Wilson polynomials, see \cite{AW},\cite{KLS}, with two parameters equal to zero. Later on we also need the continuous dual $q$-Hahn polynomials, which are Askey-Wilson polynomials with one parameter equal to zero.

The continuous dual $q$-Hahn polynomials are polynomials in $\mu_x = \frac12( x+x^{-1})$ defined by
\begin{equation} \label{eq:ContinuousDualqHahn}
p_n(\mu_x;a,b,c|q) = a^{-n}(ab,ac;q)_n \rphis{3}{2}{q^{-n}, ax, a/x}{ ab, ac}{q,q}.
\end{equation}
The three-term recurrence relation for the continuous dual $q$-Hahn polynomials is
\begin{equation} \label{eq:3term rec CDqH}
\begin{split}
(x+x^{-1}) p_n(\mu_x) = p_{n+1}(\mu_x) &+ [abcq^{2n-1}(1+q)-q^n(a+b+c)]p_n(\mu_x) \\& + (1-q^n)(1-abq^{n-1})(1-acq^{n-1})(1-bcq^{n-1}) p_{n-1}(\mu_x).
\end{split}
\end{equation}
This shows that the continuous dual $q$-Hahn polynomials $p_n$ are symmetric in $a,b,c$, which can also be obtained from applying the $_3\varphi_2$-transformation \cite[(III.11)]{GR} to the $_3\varphi_2$-series in the definition of $p_n$.
From \eqref{eq:sum p<->-p} we find the useful identity, for $p\in \N$,
\begin{equation} \label{eq:CDqH p<->-p}
p_n(\mu_x;a,b,q^{1-p}/a;q) =
\begin{cases}
(-a)^{-p} q^{-\frac12 p(p-1)} (ax^{\pm 1};q)_p p_{n-p}(\mu_x;aq^{p},b,q/a;q), & 0 \leq p \leq n,\\
0, & p >n.
\end{cases}
\end{equation}
If all parameters are real and if $ab, ac, bc <1$, then the polynomials are orthogonal on a subset of $\R$. The orthogonality relations read
\begin{equation} \label{eq:orth rel CDqH}
\begin{split}
\int_{-1}^1 p_n(\mu_x) p_m(\mu_x) w(\mu_x;a,b,c|q) d\mu_x +  \sum_{\substack{k \in \N \\ |\al q^k|>1}} p_m(\mu_{\al q^k})&p_n(\mu_{aq^k})w(\mu_{\al q^k};a,b,c;q) \\ & =\frac{ \de_{nm} }{ (q^{n+1}, abq^{n}, acq^n, bcq^n;q)_\infty},
\end{split}
\end{equation}
where $\al$ is any of the parameters $a,b,c$. Assuming $\al=a$ for the discrete part, the weight functions are given by
\begin{gather*}
w(\mu_x;a,b,c|q) = \frac{1}{2\pi \sqrt{1-\mu_x}}   \frac{ (x^{\pm 2};q)_\infty }{ (ax^{\pm 1}, bx^{\pm 1}, cx^{\pm 1} ;q)_\infty},\qquad x\in \T,\\
w(\mu_{aq^k};a,b,c|q) = \frac{ (a^{-2};q)_\infty} { (q,ab,ac,b/a,c/a;q)_\infty} \frac{ (1-a^2q^{2k}) (a^2,ab,ac;q)_k }{ (1-a^2) (q,aq/b,aq/c;q)_k }q^{-\frac12 k (k-1)} (-a^2bc)^{-k}.
\end{gather*}
We denote by $I= I_{a,b,c;q}$ the support of the orthogonality measure, so $I$ consists of the interval $[-1,1]$ and a finite (possibly empty) discrete part. We define orthonormal functions by
\[
\bar p_n(\mu_x;a,b,c|q) =
\sqrt{w(\mu_x;a,b,c|q) (q^{n+1}, abq^{n}, acq^n, bcq^n;q)_\infty}\, p_n(\mu_x;a,b,c|q),
\]
then $\{\bar p_n\}_{n \in \N}$ is an orthonormal basis for $L^2(I)$, and the three-term recurrence relation becomes
\begin{equation} \label{eq:ttr cdqHahn}
\begin{split}
2\mu_x\,\bar p_n(\mu_x) = & \sqrt{(1-q^{n+1})(1-abq^{n})(1-acq^{n})(1-bcq^{n})} \bar p_{n+1}(\mu_x)  \\ &+ [abcq^{2n-1}(1+q)-q^n(a+b+c)]\bar p_n(\mu_x)\\& + \sqrt{(1-q^{n})(1-abq^{n-1})(1-acq^{n-1})(1-bcq^{n-1})} \bar p_{n-1}(\mu_x).
\end{split}
\end{equation}
The Al-Salam--Chihara polynomials $q_n$ are obtained from the continuous dual $q$-Hahn polynomials by setting $c=0$, i.e.~\mbox{$q_n(\mu_x;a,b|q) = p_n(\mu_x;a,b,0|q)$}. Explicit expressions for $q_n$ are
\begin{equation} \label{eq:Al-Salam--Chihara}
\begin{split}
q_n(\mu_x;a,b|q) &= a^{-n}(ab;q)_n \rphis{3}{2}{q^{-n}, ax, a/x}{ ab,0}{q,q} \\
& = x^{n} (b/x;q)_n \rphis{2}{1}{q^{-n},ax}{xq^{1-n}/b}{q,\frac{q}{bx}}.
\end{split}
\end{equation}
The three-term recurrence relation and orthogonality relations are obtained by letting $c\to 0$ in \eqref{eq:3term rec CDqH} and \eqref{eq:orth rel CDqH}. We define
\[
\bar q_n(\mu_x;a,b|q) = \lim_{c \to 0} \bar p_n(\mu_x;a,b,c|q),
\]
then $\{\bar q_n\}_{n \in \N}$ is an orthonormal basis for $L^2(I_{a,b,0;q})$.\\

To determine eigenvectors of $\rho_{\tau,\si}$ we consider the action on the vectors $v_y^{\tau,\infty}$ from Proposition \ref{prop:eigenvector rho tau infty}. First we observe from \eqref{eq:rho si tau 2} that
\[
2\rho_{\tau,\si} = (q^{1-\si-\tau}-q^{1+\si-\tau})\rho_{\tau,\infty} + q^{-\tau}\be_{\tau+1,\infty}\de_{\tau,\infty} - q^{-1-\tau}\al_{\tau+1,\infty}\ga_{\tau,\infty}.
\]
Using Propositions \ref{prop:eigenvector rho tau infty} and \ref{prop:actions on v tau infty} we obtain, for $y \in -q^{2\N} \cup q^{2\tau+2\N}$,
\begin{equation} \label{eq:actionrhotausi}
\begin{split}
2\pi_\phi(\rho_{\tau,\si})\, v_{y}^{\tau,\infty}=&\, e^{-2i\phi} \sqrt{ (1+ y q^2)( 1- y q^{2-2\tau}) }\, v_{yq^2}^{\tau,\infty}\\
& + (q^{1-\si-\tau}-q^{1+\si-\tau})y \, v_{y}^{\tau,\infty}
+ e^{2i\phi} \sqrt{ (1+ y)( 1- yq^{-2\tau}) }\, v_{y/q^2}^{\tau,\infty}.
\end{split}
\end{equation}
This operator can be matched to the Jacobi operator for the Al-Salam--Chihara polynomials.
For $y \in -q^{2\N} \cup q^{2\tau+2\N}$ we define
\[
m_{x,y}^{\tau,\si} = \bar q_n(\mu_x;-\la q^{\si-\tau+1}, \la q^{1-\si-\tau}| q^2),
\]
where $y = \la q^{2n}$ with $\la \in \{-1,q^{2\tau}\}$. We also write $I_{-\la q^{\si-\tau+1}, \la q^{1-\si-\tau},0; q^2} = I_\la^{\tau,\si}$ for the corresponding support.
\begin{prop} \label{prop:intertwiner for rho tau si}
The operator $\Te^{\tau,\si}:\ell^2(\N) \to L^2(I_{-1}^{\tau,\si})\oplus L^2(I_{q^{2\tau}}^{\tau,\si})$ defined by
\[
\Te^{\tau,\si}\,v_y^{\tau,\infty} (\mu_x) =  e^{-2in \phi} m_{x,y}^{\tau,\si}, \qquad y = \la q^{2n},
\]
is unitary and intertwines $\pi_\phi(\rho_{\tau,\si})$ with the multiplication operator $M$ on $L^2(I_{-1}^{\tau,\si})\oplus L^2(I_{q^{2\tau}}^{\tau,\si})$.
\end{prop}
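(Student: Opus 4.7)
The plan is to decompose $\ell^2(\N)$ according to the two orbits $\{\la q^{2n}\mid n \in \N\}$, $\la \in \{-1,q^{2\tau}\}$, that index the spectrum of $\rho_{\tau,\infty}$, and then match the action of $\pi_\phi(\rho_{\tau,\si})$ on each orbit with the Jacobi matrix of an Al-Salam--Chihara family.

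First I would invoke \eqref{eq:actionrhotausi}, which already exhibits $\pi_\phi(\rho_{\tau,\si})$ as a tridiagonal operator in the orthonormal basis $\{v_y^{\tau,\infty}\}$ of Proposition \ref{prop:eigenvector rho tau infty} and shows that it preserves each subspace
\[
H_\la = \overline{\mathrm{span}}\{v_{\la q^{2n}}^{\tau,\infty} \mid n \in \N\},\qquad \la \in \{-1,q^{2\tau}\}.
\]
The unimodular rescaling $v_{\la q^{2n}}^{\tau,\infty}\mapsto e^{-2in\phi}v_{\la q^{2n}}^{\tau,\infty}$ absorbs the $e^{\pm 2i\phi}$ factors in \eqref{eq:actionrhotausi}, producing a real symmetric Jacobi operator on $H_\la$ with off-diagonal entry $\hf\sqrt{(1+\la q^{2n+2})(1-\la q^{2n+2-2\tau})}$ and diagonal entry $\hf(q^{1-\si-\tau}-q^{1+\si-\tau})\la q^{2n}$.

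Next I would compare this operator with the three-term recurrence \eqref{eq:ttr cdqHahn}, specialized to $c=0$ and base $q^2$, for $\bar q_n(\mu_x;-\la q^{\si-\tau+1},\la q^{1-\si-\tau}\mid q^2)$. Since $ab = -\la^2 q^{2-2\tau}$, the off-diagonal factor $\sqrt{(1-q^{2n+2})(1-abq^{2n})}$ reproduces the Jacobi entry above for both $\la^2\in\{1,q^{4\tau}\}$, and the diagonal $-q^{2n}(a+b)$ matches the computed $B_n^\la$. Consequently, defining $\Te^{\tau,\si}v_{\la q^{2n}}^{\tau,\infty}(\mu_x) = e^{-2in\phi}\bar q_n(\mu_x;-\la q^{\si-\tau+1},\la q^{1-\si-\tau}\mid q^2)$ and reading the recurrence backwards gives the intertwining identity $\Te^{\tau,\si}\pi_\phi(\rho_{\tau,\si}) = M\Te^{\tau,\si}$.

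Unitarity then follows because, on each $H_\la$, the map sends the orthonormal basis $\{v_{\la q^{2n}}^{\tau,\infty}\}_{n\in\N}$ onto the orthonormal basis of $L^2(I_\la^{\tau,\si})$ consisting of $\{\bar q_n(\,\cdot\,;-\la q^{\si-\tau+1},\la q^{1-\si-\tau}\mid q^2)\}_{n\in\N}$; orthogonality \eqref{eq:orth rel CDqH} with $c=0$ preserves inner products, and completeness of the Al-Salam--Chihara polynomials in their natural weighted space gives surjectivity. Taking the direct sum over $\la\in\{-1,q^{2\tau}\}$ yields the claimed unitary between $\ell^2(\N)$ and $L^2(I_{-1}^{\tau,\si})\oplus L^2(I_{q^{2\tau}}^{\tau,\si})$. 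The main obstacle I expect is the parameter bookkeeping: one must check that for both $\la \in \{-1,q^{2\tau}\}$ and all real $\tau,\si$ the pair $(-\la q^{\si-\tau+1},\la q^{1-\si-\tau})$ satisfies the hypotheses under which the Al-Salam--Chihara polynomials form a complete orthonormal system in $L^2(I_\la^{\tau,\si})$, including correct treatment of the discrete mass points that arise when a parameter exceeds $q^{-1}$ in modulus, and handling the boundary $n=0$ convention $v_{\la q^{-2}}^{\tau,\infty}=0$ implicit in \eqref{eq:actionrhotausi}.
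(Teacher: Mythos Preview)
Your proposal is correct and follows essentially the same route as the paper: both arguments invoke \eqref{eq:actionrhotausi} to see $\pi_\phi(\rho_{\tau,\si})$ as a tridiagonal operator in the basis $\{v_y^{\tau,\infty}\}$, compare it with the three-term recurrence \eqref{eq:ttr cdqHahn} specialized to $c=0$ and base $q^2$ with $(a,b)=(-\la q^{\si-\tau+1},\la q^{1-\si-\tau})$, and deduce unitarity from the fact that an orthonormal basis is sent to an orthonormal basis. Your version is simply more explicit about the decomposition into the two invariant subspaces $H_\la$, the absorption of the phase factors $e^{\pm 2i\phi}$, and the parameter bookkeeping.
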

Here the multiplication operator $M$ is defined by $Mf(\mu_x) = \mu_x f(\mu_x)$ almost everywhere.
\begin{proof}
The operator $\Te^{\tau,\si}$ is unitary, since it maps one orthonormal basis to another. We set $y=\la q^{2n}$ in \eqref{eq:actionrhotausi}. Comparing \eqref{eq:actionrhotausi} with the three-term recurrence relation \eqref{eq:ttr cdqHahn} with $a = -\la q^{\si-\tau+1}$, $b=\la q^{1-\si-\tau}$ and $c=0$, we see that $\Te^{\tau,\si}$ intertwines $\pi_\phi(\rho_{\tau,\si})$ with $M$.
\end{proof}
Note that $\pi_\phi(\rho_{\tau,\si})$ has continuous spectrum $[-1,1]$ with multiplicity two and (possibly empty) finite discrete spectrum
\[
\{\mu_{-q^{1-\si-\tau+2k}}\mid k \in \N, q^{1-\si-\tau+2k}>1\} \cup  \{\mu_{q^{1-\si+\tau+2k}} \mid k \in \N, q^{1-\si+\tau+2k}>1 \}.
\]
Since the spectrum is (partly) continuous, we do not have eigenvectors in general. We can, however, formulate Proposition \ref{prop:intertwiner for rho tau si} in terms of generalized eigenvectors. Let $v_x^{\tau,\si}(\la)$ denote the formal sum
\[
v_{x}^{\tau,\si;\la} = \sum_{n \in \N } e^{-2in \phi} m_{x,\la q^{2n}}^{\tau,\si} \, v_{\la q^{2n}}^{\tau,\infty}, \qquad \la = -1, q^{2\tau}.
\]
For $\mu_x \in [-1,1]$, i.e., $x \in \T$, this can be considered as a generalized eigenvector
of $\pi_\phi(\rho_{\tau,\infty})$ for eigenvalue $\mu_x$. For $\mu_x$ in the discrete spectrum this is a genuine eigenvector.

\begin{remark}
In this paper we will use generalized eigenvectors as if they were genuine eigenvectors, in particular we consider inner products of generalized eigenvectors with other vectors $v$. In general the inner products do not exist, but for specific vectors $v$ the inner products do make sense. For example,
\[
\langle v_{x}^{\tau,\si;\la}, v_{\la q^{2n}}^{\tau,\infty} \rangle = e^{-2in \phi} m_{x,\la q^{2n}}^{\tau,\si} = \Te^{\tau,\si}\,v_{\la q^{2n}}^{\tau,\infty} (\mu_x).
\]
\end{remark}

Similar as in Proposition \ref{prop:actions on v tau infty} we can determine actions of $\al_{\tau,\si}, \be_{\tau,\si}, \ga_{\tau,\si}, \de_{\tau,\si}$ on the (generalized) eigenvectors. We do not need these actions later on, so we omit the details. The result is
\[
\begin{split}
\pi_\phi(\al_{\tau,\si})\, v_x^{\tau,\si} &= i e^{-i\phi} q^{\tau+ \si-\hf} \sqrt{(1+ q^{1-\si-\tau}x) (1+ q^{1-\si-\tau}/x)} \, v_x^{\tau-1,\si-1},\\
\pi_\phi(\be_{\tau,\si})\, v_x^{\tau,\si} &= i e^{-i\phi} q^{\tau-\hf} \sqrt{(1- q^{\si-\tau+1} x) (1- q^{\si-\tau+1}/x)} \, v_x^{\tau-1,\si+1},\\
\pi_\phi(\ga_{\tau,\si})\, v_x^{\tau,\si} &= i e^{i\phi} q^{\tau+ \hf} \sqrt{(1- q^{\si-\tau-1} x)(1- q^{\si-\tau-1}/x)}\, v_x^{\tau+1,\si-1},\\
\pi_\phi(\de_{\tau,\si})\, v_x^{\tau,\si} &= -i e^{i\phi} q^{\tau+ \si+\hf} \sqrt{(1+ q^{-1-\si-\tau} x) (1+ q^{-1-\si-\tau} /x)}\, v_x^{\tau+1,\si+1}.
\end{split}
\]
Alternatively, we can  reformulate these as actions on the function $m_{x,y}^{\tau,\si}$ using the operator $\Te^{\tau,\si}$ from Proposition \ref{prop:intertwiner for rho tau si}. Define $\Te = \bigoplus_{m,n \in \Z} \Te^{\tau+m,\si+n}$, then
\[
\pi'_\phi(X) m_{x,y}^{\tau,\si} = \Te \pi_\phi(X) \Te^* m_{x,y}^{\tau,\si}, \qquad X \in \mathcal A_q,
\]
gives an action of $\mathcal A_q^{\tau,\si}$ on $\bigoplus_{m,n \in \Z} L^2(I_{-1}^{\tau+m,\si+n}) \oplus L^2(I_{q^{2\tau+2m}}^{\tau+m,\si+n})$.

For the genuine eigenvectors $v_{x}^{\tau,\si}$, i.e.~with $\mu_x$ in the discrete spectrum, we can rewrite the actions as `dynamic' raising and lowering operators. Indeed, let $x_k = -q^{1-\tau-\si+2k}$ such that $\mu_{x_k}$ is in the discrete spectrum of $\pi_\phi(\rho_{\tau,\si})$. We set $\hat v_k^{\tau,\si} = v_{x_k}^{\tau,\si}$, then
\[
\begin{split}
\pi_\phi(\al_{\tau,\si})\, \hat v_k^{\tau,\si} &= i e^{-i\phi} q^{\tau+ \si-\hf} \sqrt{(1 -  q^{2-2\si-2\tau+2k}) (1- q^{-2k}) } \, \hat v_{k-1}^{\tau-1,\si-1},\\
\pi_\phi(\be_{\tau,\si})\, \hat v_k^{\tau,\si} &= i e^{-i\phi} q^{\tau-\hf} \sqrt{(1 + q^{2-2\tau+2k}) (1 + q^{2\si-2k})} \, \hat v_k^{\tau-1,\si+1},\\
\pi_\phi(\ga_{\tau,\si})\, \hat v_k^{\tau,\si} &= i e^{i\phi} q^{\tau+ \hf} \sqrt{(1+ q^{-2\tau+2k})(1+ q^{2\si-2-2k})}\, \hat v_k^{\tau+1,\si-1},\\
\pi_\phi(\de_{\tau,\si})\, \hat v_k^{\tau,\si} &= -i e^{i\phi} q^{\tau+ \si+\hf} \sqrt{(1- q^{-2\si-2\tau+2k}) (1- q^{-2+2k})}\, \hat v_{k+1}^{\tau+1,\si+1}.
\end{split}
\]
There is a similar result for the vectors $v_{x_k}^{\tau,\si}$, with $x_k = q^{1-\si+\tau+2k}$ such that $\mu_{x_k}$ is in the discrete spectrum.

\subsection{Clebsch-Gordan coefficients} \label{ssec:CGCtausi}
To determine Clebsch-Gordan coefficients, we diagonalize the action of $\mathcal T(\rho_{\tau,\si})$ on a suitable basis. From letting $\la \to \infty$ in \eqref{eq:coproduct rho tau sigma} we obtain
\begin{equation} \label{eq:De(rhotausi)}
\begin{split}
\De(\rho_{\tau,\si})=& \hf q^{-\tau-\si-1} \Big( q^{-1}\, \al_{\tau+1,\infty} \ga_{\tau,\infty} \tensor \al_{\infty, \si+1} \be_{\infty,\si}
+ q\, \be_{\tau+1,\infty} \de_{\tau, \infty} \tensor \ga_{\infty, \si+1} \de_{\infty,\si}\\
& + q^2(1+q^2)\, \rho_{\tau,\infty} \tensor \rho_{\infty, \si} + q^2(1-q^{2\si})\, \rho_{\tau,\infty} \tensor 1 + q^2(1-q^{2\tau})\, 1 \tensor \rho_{\infty, \si} \Big)
\end{split}
\end{equation}
We now let $\mathcal T(\rho_{\tau,\si})$ act on the $\ell^2(\N) \tensor \ell^2(\N)$-basis  $\{v_y^{\tau,\infty} \tensor v_z^{\infty,\si} \mid y \in -q^{2\N}\cup q^{2\tau+2\N}, z \in -q^{2\N}\cup q^{2\si+2\N}\}$, see Propositions \ref{prop:eigenvector rho tau infty}, \ref{prop:actions on v tau infty} and \eqref{eq:actions on v infty si}. This gives
\[
\begin{split}
\mathcal T(\rho_{\tau,\si})\, &v_{y}^{\tau,\infty} \tensor v_{z}^{\infty,\si} = \\
&\hf \Big[yz q^{1-\tau -\si}(1+ q^2) + y q^{1-\tau -\si} (1- q^{2\si})  + z q^{1-\tau -\si}(1-q^{2\tau})\Big]\,  v_{y}^{\tau,\infty} \tensor v_{z}^{\infty,\si}\\
+& \hf e^{2i(\phi - \psi)} \sqrt{ (1+ y ) (1- y q^{-2\tau} ) ( 1+ z) (1-z q^{-2\si} ) } \, v_{y/q^2}^{\tau,\infty} \tensor v_{z/q^2}^{\infty,\si} \\
+& \hf e^{-2i(\phi - \psi)} \sqrt{ (1+ y q^{2} ) (1- y q^{2-2\tau} ) ( 1+ z q^{2} ) (1-z q^{2-2\si} ) } \, v_{y q^{2}}^{\tau,\infty} \tensor v_{z q^{2} }^{\infty,\si}.
\end{split}
\]
We can identify this with the Jacobi operator corresponding to continuous dual $q^2$-Hahn polynomials. We define, for $y \in -q^{2\N}\cup q^{2\tau+2\N}$ and $z \in -q^{2\Z} \cup q^{2\si +2\Z}$,
\[
c_{x,y,z}^{\tau,\si} = \bar p_n(\mu_x;-\la q^{1-\tau+\si}, \la q^{1-\tau-\si}, z q^{1+\tau-\si}/\la |q^2), \qquad y = \la q^{2n}, \ \la \in \{-1,q^{2\tau}\}.
\]
For the corresponding support we write $I_{-\la q^{1-\tau+\si}, \la q^{1-\tau-\si}, z q^{1+\tau-\si}/\la ;q^2}  = I^{\tau,\si}_{\la,z}$. Let us remark that if $z$ is too large the pairwise product of the continuous dual $q$-Hahn parameters is not smaller than $1$, so at first sight it seems that the orthogonality relations for the continuous dual $q$-Hahn polynomials are not valid. If this is the case we use \eqref{eq:CDqH p<->-p} so that all conditions for orthogonality are again satisfied.
\begin{prop}
Let $z \in -q^{2\Z} \cup q^{2\si +2\Z}$ and let $\mathcal H_z$ be the subspace of $\ell^2(\N) \tensor \ell^2(\N)$ given by
\[
\mathcal H_z= \overline{\mathrm{span}}\left\{ v_{\la q^{2n}}^{\tau,\infty} \tensor v_{z q^{2n}}^{\infty,\si}\mid n \in \N,\la \in \{-1,q^{2\tau}\}\right \}.
\]
Then the operator $\Ups : \mathcal H_z \to L^2(I^{\tau,\si}_{-1,z}) \oplus L^2(I^{\tau,\si}_{q^{2\tau},z})$ defined by
\[
\Ups v_{\la q^{2n}}^{\tau,\infty} \tensor v_{z q^{2n}}^{\infty,\si} (\mu_x)= e^{2in(\psi-\phi)} c_{x,\la q^{2n},z}^{\tau,\si},\qquad \la \in \{-1, q^{2\tau}\},
\]
is unitary and intertwines $\mathcal T(\rho_{\tau,\si})|_{\mathcal H_z}$ with the multiplication operator $M$ on $L^2(I^{\tau,\si}_{-1,z}) \oplus L^2(I^{\tau,\si}_{q^{2\tau},z})$.
\end{prop}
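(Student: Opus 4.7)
The plan is to mirror the proof of Proposition \ref{prop:intertwiner for rho tau si}. First observe that $\mathcal H_z$ is invariant under $\mathcal T(\rho_{\tau,\si})$: the explicit three-term action of $\mathcal T(\rho_{\tau,\si})$ on $v_y^{\tau,\infty}\tensor v_z^{\infty,\si}$ displayed just above the proposition shifts $y$ and the second eigenvalue simultaneously by a factor of $q^{\pm 2}$, so the pair $(\la q^{2n}, zq^{2n})$ moves to $(\la q^{2n\pm 2}, zq^{2n\pm 2})$. Writing $\mathcal H_z = \mathcal H_z^{(-1)} \oplus \mathcal H_z^{(q^{2\tau})}$, where the summands are the closed spans of the basis vectors with $\la=-1$ and $\la=q^{2\tau}$ respectively, the operator $\mathcal T(\rho_{\tau,\si})$ acts as a Jacobi operator on each factor.

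I then substitute $y = \la q^{2n}$ and the fixed parameter $z$ into that action and match the result with the three-term recurrence \eqref{eq:ttr cdqHahn} (applied in base $q^2$) for $\bar p_n(\mu_x;a,b,c|q^2)$ with
\[
a = -\la q^{1-\tau+\si}, \qquad b = \la q^{1-\tau-\si}, \qquad c = zq^{1+\tau-\si}/\la.
\]
A direct computation using $\la^2 \in \{1, q^{4\tau}\}$ gives $ab = -\la^2 q^{2-2\tau}$, $ac = -zq^2$ and $bc = zq^{2-2\si}$, from which the four factors under the shift-up square root of the recurrence coincide with $(1+\la q^{2n+2})(1-\la q^{2n+2-2\tau})(1+zq^{2n+2})(1-zq^{2n+2-2\si})$ appearing in the action; the shift-down and diagonal terms match similarly. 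The phase factor $e^{2in(\psi-\phi)}$ in the definition of $\Ups$ is placed precisely so that the off-diagonal phases $e^{\pm 2i(\phi-\psi)}$ of the action are absorbed, yielding $\Ups\, \mathcal T(\rho_{\tau,\si}) = M\Ups$ on each summand. Unitarity of $\Ups$ then follows from the fact that $\{\bar p_n(\,\cdot\,;a,b,c|q^2)\}_{n \in \N}$ is an orthonormal basis of $L^2(I_{a,b,c;q^2})$.

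The expected main obstacle is the case when $z$ is so large in magnitude that one of $|ab|,|ac|,|bc|$ fails to be $<1$, so that the orthogonality statement \eqref{eq:orth rel CDqH} does not apply directly. In this situation the offending product equals $q^{1-p}$ for some $p \in \N_{\geq 1}$, and the identity \eqref{eq:CDqH p<->-p} lets me rewrite $c_{x,y,z}^{\tau,\si}$ as a continuous dual $q^2$-Hahn polynomial of smaller degree with admissible parameters, at the cost of a shift in the index $n$ (with the convention $v_{\la q^{-2k}}^{\tau,\infty} = 0$ for $k\geq 1$, analogous to the one used in Proposition \ref{prop:eigenvector V tau infty}). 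This is the three-parameter analogue of the use of \eqref{eq:n<->n-p} in the proof of Proposition \ref{prop:eigenvector V tau infty}; once the rewriting has been carried out, the matching of recurrences and the orthonormal-basis argument above apply verbatim.
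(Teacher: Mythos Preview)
Your proposal is correct and follows the same approach as the paper: identify the restricted action of $\mathcal T(\rho_{\tau,\si})$ on $\mathcal H_z^{(\la)}$ with the Jacobi operator for the continuous dual $q^2$-Hahn polynomials with the stated parameters, absorb the phases via the factor $e^{2in(\psi-\phi)}$, and invoke the orthonormal-basis property for unitarity. Your handling of the case where $z$ is too large via \eqref{eq:CDqH p<->-p} is exactly what the paper does as well (see the remark immediately preceding the proposition).
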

Similar as before, we use here the convention $v_{\nu q^{-2k}}^{\infty,\si} = 0$ for $k \in \N_{\geq 1}$ and $\nu \in \{-1,q^{2\si}\}$. Observe that $\mathcal T(\rho_{\tau,\si})|_{\mathcal H_z}$ has $[-1,1]$ as continuous spectrum, and the points in the discrete spectrum $\Sigma^{\tau,\si}$ are of the form $-\mu_{q^{1-\tau-\si+2k}}$ or $\mu_{q^{1+\tau-\si+2k}}$ for some integer $k$.
\begin{remark}
Recall that $\rho_{\tau,\infty}= \lim_{\si\to \infty}2q^{\tau+\si-1} \rho_{\tau,\si}$. We see that in the limit $\si \to \infty$ the continuous spectrum vanishes. Furthermore, the number of points in the discrete spectrum increases as $\si$ increases, and then we see that
\[
\lim_{\si\to \infty} 2q^{\tau+\si-1} \Sigma^{\tau,\si}=-q^{2\N} \cup q^{2\tau+2\N},
\]
as expected from Proposition \ref{prop:eigenvector V tau infty}. Furthermore, for $z \in -q^{2\Z}$ and $x_k = \nu q^{1-\tau+\si+2k}$ with \mbox{$\nu \in \{-1,q^{2\tau}\}$}, we have
\[
\lim_{\si \to \infty} \rphis{3}{2}{q^{-2n}, -\la q^{1-\tau+\si}x_k , -\la q^{1-\tau+\si}/x_k}{-\la^2 q^{2-2\tau}, -z \la q^2}{q^2,q^2} = \rphis{3}{2}{q^{-2n}, -\la \nu q^{2-2\tau+2k} ,0}{-\la^2 q^{2-2\tau}, -z \la q^2}{q^2,q^2},
\]
which corresponds to the limit from the continuous dual $q^2$-Hahn polynomials to the big $q^2$-Laguerre polynomials. To be precise, this gives the big $q^2$-Laguerre polynomials from the previous section in case $p<0$, see the proof of Proposition \ref{prop:eigenvector V tau infty}. From this we  obtain
\[
\lim_{\si \to \infty} c_{x_k,\la q^{2n},z}^{\tau,\si} = c^{\tau,\infty}_{\nu q^{2k}, -z\la, n}.
\]
\end{remark}

In terms of generalized eigenvectors, the vectors
\begin{equation} \label{eq:eigenvector Vxztausi}
V_{x,z}^{\tau,\si;\la} =\sum_{n \in \N } e^{2in(\psi-\phi)} c_{x,\la q^{2n},z}^{\tau,\si}\, v_{\la q^{2n}}^{\tau,\infty} \tensor v_{zq^{2n}}^{\infty,\si}, \qquad \la = -1, q^{2\tau},
\end{equation}
are both generalized eigenvectors of $\mathcal T(\rho_{\tau,\si})$ for eigenvalue $\mu_x$. Alternatively, we may write
\[
v_{y}^{\tau,\infty} \tensor v_{z}^{\infty,\si} = \int_{-1}^1 e^{- 2in(\psi-\phi)} c_{x,y,zq^{-2n}}^{\tau,\si}\, V_{x,zq^{-2n}}^{\tau,\si;\la} d\mu_x + \sum_k e^{- 2in(\psi-\phi)} c_{x_k,y,zq^{-2n}}^{\tau,\si}\, V_{x_k,z}^{\tau,\si;\la},
\]
for $y = \la q^{2n}$ and $\mu_{x_k}$ is in the discrete spectrum. The actions of the generators of $\mathcal A_q^{\tau,\si}$ are:
\[
\begin{split}
\mathcal T(\al_{\tau,\si}) V_{x,z}^{\tau,\si} & = - e^{i(\psi-\phi)} q^{\tau+\si-\hf}\sqrt{(1+q^{1-\tau-\si}x)(1+q^{1-\tau-\si}/x)}\,  V_{x,z}^{\tau-1,\si-1},\\
\mathcal T(\be_{\tau,\si}) V_{x,z}^{\tau,\si} & = e^{i(\psi-\phi)}q^{\tau-\hf} \sqrt{( 1-q^{1-\tau+\si}x)(1-q^{1-\tau+\si}/x)}\, V_{x,zq^2}^{\tau-1,\si+1},\\
\mathcal T(\ga_{\tau,\si}) V_{x,z}^{\tau,\si} & = -e^{i(\phi-\psi)}q^{\tau+\hf} \sqrt{(1-q^{-1-\tau+\si}x)(1-q^{-1-\tau+\si}/x)}\, V_{x,z/q^2}^{\tau+1,\si-1},\\
\mathcal T(\de_{\tau,\si}) V_{x,z}^{\tau,\si} & = -e^{i(\phi-\psi)} q^{\tau+\si+\hf} \sqrt{(1+q^{-1-\tau-\si}x)(1+q^{-1-\tau-\si}/x)}\,  V_{x,z}^{\tau+1,\si+1}.
\end{split}
\]

\begin{remark} \label{rem:si<->tau}
By symmetry in $\si$ and $\tau$, we may also define, for $y \in -q^{2\Z} \cup q^{2\tau +2\Z}$, the subspace $\widetilde{ \mathcal H}_y$ of $\ell^2(\N) \tensor \ell^2(\N)$ by
\[
\widetilde{\mathcal H}_y= \overline{\mathrm{span}}\left\{ v_{y q^{2n}}^{\tau,\infty} \tensor v_{\xi q^{2n}}^{\infty,\si}\mid n \in \N,\xi \in \{-1,q^{2\si}\}\right \}.
\]
Then the operator $\widetilde \Ups : \widetilde{\mathcal H}_y \to L^2(I^{\si,\tau}_{-1,y}) \oplus L^2(I^{\si,\tau}_{q^{2\si},y})$ defined by
\[
\widetilde \Ups v_{y q^{2n}}^{\tau,\infty} \tensor v_{\xi q^{2n}}^{\infty,\si} (\mu_x)= e^{2in(\psi-\phi)} c_{x,\xi q^{2n},y}^{\si,\tau},\qquad \xi \in \{-1, q^{2\si}\},
\]
is unitary and intertwines $\mathcal T(\rho_{\tau,\si})|_{\widetilde{\mathcal H}_y}$ with $M$. So, for $\xi = -1, q^{2\si}$, the vectors
\[
\widetilde V_{x,y}^{\tau,\si;\xi} =\sum_{n \in \N } e^{2in(\psi-\phi)} c_{x,\xi q^{2n},y}^{\si,\tau}\, v_{y q^{2n}}^{\tau,\infty} \tensor v_{\xi q^{2n}}^{\infty,\si}
\]
are both generalized eigenvectors of $\mathcal T(\rho_{\tau,\si})$ for eigenvalue $\mu_x$. These are related to the generalized eigenvectors \eqref{eq:eigenvector Vxztausi} by
\begin{equation} \label{eq:tildeV<->V}
\begin{pmatrix}
\widetilde V_{x,\la q^{2k}}^{\tau,\si;-1} \\\widetilde V_{x,\la q^{2k}}^{\tau,\si;q^{2\si}}
\end{pmatrix}
=
\begin{pmatrix}
\de_{\la,-1} \de_{\xi,-1} & \de_{\la,q^{2\tau}} \de_{\xi,-1} \\
\de_{\la,-1} \de_{\xi,q^{2\si}} & \de_{\la, q^{2\tau}} \de_{\xi,q^{2\si}}
\end{pmatrix}
\begin{pmatrix}
V_{x,\xi q^{-2k}}^{\tau,\si;-1} \\ V_{x,\xi q^{-2k}}^{\tau,\si;q^{2\tau}}
\end{pmatrix}.
\end{equation}
\end{remark}

\section{Coupling coefficients for two-fold tensor products} \label{sec:twofold}
In this section we determine coupling coefficients between different (generalized) eigenvectors of $\mathcal T(\rho_{\tau,\si})$. For simplicity we take the representation labels $\phi$ and $\psi$ both equal to $0$. Note that there is hardly any loss of generality in doing so, because the representation labels only occur in phase factors in eigenvectors in the previous sections. First we introduce a class of functions that we will need.

\subsection{Al-Salam--Carlitz II functions} \label{ssec:ACSfunctions}
The Al-Salam--Carlitz II polynomials $P_n$ and related functions $Q_n$, see \cite{Groen}, are defined by
\begin{equation} \label{eq:defPn}
P_n(x)=P_n(x;c,d;q) = (-c)^n q^{-\frac12 n(n-1)} \rphis{2}{0}{q^{-n},cx}{\mhyphen}{q,\frac{dq^n}{c}},
\qquad n \in \N,
\end{equation}
and
\begin{equation} \label{eq:defQn}
\begin{split}
Q_n(x) = Q_n(x;c,d;z_-,z_+;q) =  & (-d)^{n} q^{-\frac12n(n+1)} \frac{(cx;q)_\infty \te(dz_-,dz_+;q) }{ (q/dx, q^{n+1}x/dz_-z_+;q)_\infty} \\ &\times \rphis{1}{1}{q/cx}{dz_-z_+q^{-n}/x}{q,\frac{cz_-z_+q^{-n}}{x}}, \qquad \qquad n \in \Z.
\end{split}
\end{equation}
Both $P_n$ and $Q_n$ are symmetric in the parameters $c$ and $d$, and $Q_n$ is also symmetric in $z_-$ and $z_+$. For notational convenience we also define $P_n =0$ for $n \in -\N_{\geq 1}$.

Let $\overline c = d$, $z_+>0$ and $z_-<0$, then the functions $P_n$ and $Q_n$ satisfy the orthogonality relations
\begin{equation} \label{eq:orthogonalityPQ}
\begin{split}
\frac{1}{1-q}\int_{\infty(z_-)}^{\infty(z_+)} P_m(x) P_n(x) w(x)\, d_q x &= \de_{mn} h_n^P,\\
\frac{1}{1-q}\int_{\infty(z_-)}^{\infty(z_+)} Q_m(x) Q_n(x) w(x)\, d_q x &= \de_{mn} h_n^Q,\\
\frac{1}{1-q}\int_{\infty(z_-)}^{\infty(z_+)} P_m(x) Q_n(x) w(x)\, d_q x &=0,
\end{split}
\end{equation}
where $m,n \in \Z$ and
\begin{gather*}
w(x) = w(x;a,b;q) = \frac{1}{(cx,dx;q)_\infty},\\
h_n^P=h_n^P(c,d;z_-,z_+;q) =
\left\{
\begin{aligned}
&z_+(q;q)_n(cd)^{n} q^{-n^2} \frac{(q;q)_\infty\te(z_-/z_+,cdz_-z_+;q)}{\te(c z_-,dz_-,cz_+,dz_+;q)}, & n \geq 0,\\
&0, & n <0
\end{aligned} \right. \\
h_n^Q=h_n^Q(c,d;z_-,z_+;q) = z_+(-z_-z_+)^{-n}q^{-\frac12 n(n+1)} (cdz_-z_+q^{-n-1};q)_\infty  (q;q)_\infty^2 \te(z_-/z_+;q).
\end{gather*}
Moreover, $\{P_n\}_{n \in \N} \cup \{Q_n\}_{n \in \Z}$ is an orthogonal basis for the corresponding $L^2$-space, which implies the dual orthogonality relations
\[
\sum_{n \in \N} P_n(x)P_n(y) \frac{ 1}{h_n^P} + \sum_{n \in \Z} Q_n(x)Q_n(y) \frac{ 1}{h_n^Q} = \frac{\de_{xy}}{|x|w(x)},
\]
for $x,y \in z_-q^{\Z} \cup z_+ q^\Z$.

The following $q$-integral evaluation formulas turn out to be useful. The proof is given in the appendix.
\begin{prop} \label{prop:integralPQ}
For $n \in \N$,
\[
\begin{split}
\int_{z_-}^{z_+}& P_n(x) \frac{ (qx/z_-,qx/z_+;q)_\infty}{(cx,dx;q)_\infty} d_qx \\
&= (1-q)z_+(-d)^n q^{-\frac12n(n-1)} \frac{ (q,cdz_-z_+q^n;q)_\infty \te(z_-/z_+;q)}{ (cz_-q^n,dz_-,cz_+, dz_+;q)_\infty} \rphis{2}{1}{q^{-n}, dz_+ }{q^{1-n}/cz_-}{q, \frac{q}{dz_-}},\\
\int_{z_-}^{z_+}& Q_n(x) \frac{ (qx/z_-,qx/z_+;q)_\infty}{(cx,dx;q)_\infty} d_qx \\
& =(1-q)z_+ (c/q)^n q^{-\frac12 n(n-1)} (q/cz_+;q)_n (q,q^{n+1};q)_\infty \te(z_-/z_+;q) \rphis{2}{1}{q^{-n}, q/dz_-}{cz_+q^{-n}}{q,dz_+}.
\end{split}
\]
Furthermore, for $n \in -\N_{\geq 1}$,
\[
\int_{z_-}^{z_+} Q_n(x) \frac{ (qx/z_-,qx/z_+;q)_\infty}{(cx,dx;q)_\infty} d_qx = 0.
\]
\end{prop}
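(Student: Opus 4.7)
The strategy for both integrals is the same four-step recipe: expand the hypergeometric series representation of $P_n$ or $Q_n$ given in \eqref{eq:defPn}/\eqref{eq:defQn}, interchange the resulting sum with the $q$-integral, evaluate each term using a single $q$-beta integral, and finally identify the series that appears. The $q$-beta integral in question is the standard Al--Salam--Verma evaluation
\[
\int_{z_-}^{z_+}\frac{(qx/z_-,qx/z_+;q)_\infty}{(cx,dx;q)_\infty}\,d_qx=(1-q)\,z_+\frac{(q,cdz_-z_+;q)_\infty\,\te(z_-/z_+;q)}{(cz_-,cz_+,dz_-,dz_+;q)_\infty},\qquad z_-<0<z_+,
\]
which is itself the $n=0$ case of the first formula in the proposition.

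For the $P_n$ integral, substitute the $_2\varphi_0$ representation of $P_n(x)$ from \eqref{eq:defPn}. In the $k$-th term, $(cx;q)_k$ cancels the first $k$ factors of $(cx;q)_\infty$ in the weight, leaving $(cxq^k;q)_\infty^{-1}$; the inner integral is then the Al--Salam--Verma integral with $c$ replaced by $cq^k$. After pulling out the $k$-independent factors and using $(Aq^k;q)_\infty=(A;q)_\infty/(A;q)_k$, the answer becomes a prefactor times the terminating ${}_3\varphi_1$ series in $dq^n/c$ with upper parameters $q^{-n},cz_-,cz_+$ and lower parameter $cdz_-z_+$. To convert this into the ${}_2\varphi_1$ stated in the proposition I would perform the reversal $k\mapsto n-k$ of summation, using the standard identity $(a;q)_{n-k}=(a;q)_n(-1)^k q^{\frac12 k(k-1)-nk}/\bigl(a^k(q^{1-n}/a;q)_k\bigr)$ on each $q$-shifted factorial; after collecting $q$-powers this produces the ${}_2\varphi_1\bigl(q^{-n},dz_+;q^{1-n}/cz_-;q,q/dz_-\bigr)$ in the statement, and simultaneously reshuffles the prefactor into the form involving $(cz_-q^n,cdz_-z_+q^n;q)_\infty$ via $(A;q)_\infty=(A;q)_n(Aq^n;q)_\infty$.

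For the $Q_n$ integral, apply the same scheme using the $_1\varphi_1$ expansion of $Q_n(x)$ from \eqref{eq:defQn}: the prefactor $(cx;q)_\infty$ of $Q_n$ cancels $(cx;q)_\infty^{-1}$ from the weight, and the remaining $(dx,q/dx;q)_\infty=\te(dx;q)$ is handled by the theta-product identity \eqref{eq:te-product} once the $q^k$-shifts produced by the $_1\varphi_1$ terms are moved inside. Each term again reduces to a shifted version of the basic $q$-beta integral; careful $q$-Pochhammer bookkeeping produces the stated prefactor $(c/q)^n q^{-\frac12 n(n-1)}(q/cz_+;q)_n(q,q^{n+1};q)_\infty\te(z_-/z_+;q)$ together with the ${}_2\varphi_1\bigl(q^{-n},q/dz_-;cz_+q^{-n};q,dz_+\bigr)$. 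Because this single derivation is valid for all $n\in\Z$, the vanishing statement for $n\in-\N_{\ge 1}$ is an immediate consequence: the factor $(q^{n+1};q)_\infty$ contains $(1-q^0)=0$ for such $n$.

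The main obstacle will be the hypergeometric transformation step for $P_n$: matching the ${}_3\varphi_1$ produced by the direct expansion with the ${}_2\varphi_1$ in the statement requires simultaneous reversal-of-summation identities on four Pochhammer symbols and careful tracking of all $q$-power prefactors. The $Q_n$ calculation has no such symbolic transformation but is busier because of the infinite ${}_1\varphi_1$ series and the theta manipulations, and one must ensure that the prefactor organises itself exactly as $(q/cz_+;q)_n$ rather than in some other equivalent $q$-Pochhammer combination.
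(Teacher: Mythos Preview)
Your plan for $P_n$ is sound: expanding the ${}_2\varphi_0$ and integrating term by term against the Al--Salam--Verma evaluation does yield
\[
(-c)^n q^{-\frac12 n(n-1)}(1-q)z_+\frac{(q,cdz_-z_+;q)_\infty\te(z_-/z_+;q)}{(cz_-,cz_+,dz_-,dz_+;q)_\infty}\sum_{k=0}^n\frac{(q^{-n},cz_-,cz_+;q)_k}{(q,cdz_-z_+;q)_k}\Bigl(\frac{dq^n}{c}\Bigr)^k,
\]
and a further hypergeometric manipulation brings this to the stated ${}_2\varphi_1$. Be aware, though, that a naive reversal $k\mapsto n-k$ does not land directly on ${}_2\varphi_1(q^{-n},dz_+;q^{1-n}/cz_-;q,q/dz_-)$: it produces a series with two lower parameters $q^{1-n}/cz_-,\,q^{1-n}/cz_+$ and an extra numerator $q^{1-n}/cdz_-z_+$, i.e.\ a ${}_2\varphi_2$-shape. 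You will need an additional terminating transformation (or an identity exploiting the special relation $cz_-\cdot cz_+=(c/d)\,cdz_-z_+$) on top of the reversal.

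The genuine gap is in your treatment of $Q_n$. In the representation \eqref{eq:defQn} the variable $x$ enters the ${}_1\varphi_1$ through the upper parameter $q/cx$, the lower parameter $dz_-z_+q^{-n}/x$, and the argument $cz_-z_+q^{-n}/x$; the prefactor also contributes $(q/dx;q)_\infty^{-1}$ and $(q^{n+1}x/dz_-z_+;q)_\infty^{-1}$. After cancelling $(cx;q)_\infty$ against the weight, the $k$-th term of the expansion carries $x$-dependence
\[
\frac{(q/cx;q)_k}{(dz_-z_+q^{-n}/x;q)_k}\,x^{-k}\cdot\frac{(qx/z_-,qx/z_+;q)_\infty}{\te(dx;q)\,(q^{n+1}x/dz_-z_+;q)_\infty},
\]
which is \emph{not} a $q$-shift of the Al--Salam--Verma integrand: there is a theta in the denominator and Pochhammers in $1/x$ that have no counterpart in that evaluation. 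So the claim that ``each term again reduces to a shifted version of the basic $q$-beta integral'' is unjustified. The paper circumvents this by expressing both $P_n$ and $Q_n$ as specializations of a single analytic family $\varphi_\ga(x)$, which can be written as a combination of functions $\psi_\ga(x;c,d)$ whose only $x$-dependence sits in a factor $(dx;q)_\infty$ (cancelling half the weight) and in a ${}_2\varphi_1$ with upper parameter $cx$; term-by-term integration then works cleanly, and one finishes with a three-term ${}_2\varphi_1$ transformation plus Heine. If you want to keep a direct attack on $Q_n$, you first need an alternative series expansion in which the $x$-dependence is polynomial in $cx$ or $dx$ rather than rational in $1/x$; the definition \eqref{eq:defQn} by itself will not cooperate.

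A smaller point: your vanishing argument for $n\in -\N_{\ge 1}$ via $(q^{n+1};q)_\infty=0$ presupposes that the accompanying ${}_2\varphi_1$ is finite. For $n<0$ that series is nonterminating, so you must impose a convergence condition (such as $|dz_+|<1$) before concluding that the product is zero.
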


\textbf{A limit case.} We also need a limit case of the Al-Salam--Carlitz II functions.
We let $c \to q/z_+$ and $d \to 0$. Clearly the polynomials $P_n$ vanish in this limit. For the limit of $Q_n(x)$ we distinguish between $x \in z_+q^\Z$ and $x \in z_- q^\Z$. Using \eqref{eq:te-product} we obtain
\[
\begin{split}
Q_n(z_+q^k) = (z_-)^{-n} \left(\frac{z_+}{z_-}\right)^k q^{k(n+k)} (cz_+q^k,dz_+q^k,dz_-q^{-n-k};q)_\infty \rphis{1}{1}{q^{1-k}/cz_+}{dz_- q^{-n-k}}{q,cz_- q^{-n-k}},
\end{split}
\]
so that
\[
\lim_{\substack{c \to q/z_+\\d \to 0}} Q_n(z_+q^k) = (z_-)^{-n} \left(\frac{z_+}{z_-}\right)^k q^{k(n+k)} (q^{k+1};q)_\infty \rphis{1}{1}{q^{-k}}{0}{q,\frac{z_-q^{1-n-k}}{z_+}},
\]
which equals zero if $k \in -\N_{\geq 1}$. Reversing the order of summation in the $_1\varphi_1$-series,
\[
\rphis{1}{1}{q^{-k}}{0}{zq^{1-k}} = (zq^{-k})^k \rphis{1}{1}{q^{-k}}{0}{\frac{q^{1+k}}{z}},
\]
we can now recognize the limit of $Q_n$ as a Stieltjes-Wigert polynomial, which is defined by
\begin{equation} \label{eq:Stieltjes-Wigert}
S_k(x;q) = \frac{1}{(q;q)_k}\rphis{1}{1}{q^{-k}}{0}{-xq^{1+k}},
\end{equation}
so that
\[
\lim_{\substack{c \to q/z_+\\d \to 0}} Q_n(z_+q^k) = \frac{(q;q)_\infty}{(z_-)^{n}} S_k(-z_+q^n/z_-;q).
\]
In the same way we can express the limit of $Q_n(z_-q^k)$ in terms of the functions
\begin{equation} \label{eq:SW-qBessel}
M_k^{(t)}(x;q) = \frac{1}{(q;q)_\infty} \rphis{1}{1}{-tq^{-k}}{0}{q,\frac{xq^{k+1}}{t}},
\end{equation}
which are closely related to Jackson's second $q$-Bessel functions. We have
\[
\begin{split}
\lim_{\substack{c \to q/z_+\\d \to 0}} Q_n(z_-q^k) &= (z_+)^{-n} \left( \frac{z_-}{z_+} \right)^k q^{k(k+n)} (z_-q^{k+1}/z_+;q)_\infty \rphis{1}{1}{ z_+ q^{-k}/z_-}{0}{q;q^{1-n-k}} \\
& = (z_-)^{-n} (z_-q^{k+1}/z_+;q)_\infty \rphis{1}{1}{ z_+ q^{-k}/z_-}{0}{q;q^{1+n+k}} \\
& = (z_-)^{-n} (q,z_-q^{k+1}/z_+;q)_\infty M_k^{(-z_+/z_-)}(-z_+q^{n}/z_-;q).
\end{split}
\]
From the orthogonality relations for $Q_n$ we obtain (formally)
\[
\begin{split}
&\sum_{k \in \N} S_k\left(-\frac{z_+q^m}{z_-};q\right) S_k\left(-\frac{z_+q^n}{z_-};q\right) \frac{ z_+ q^k }{(q^{k+1};q)_\infty}\\
-& \sum_{k \in \Z} M_k^{(-z_+/z_-)}\left(-\frac{z_+q^m}{z_-};q\right) M_k^{(-z_+/z_-)}\left(-\frac{z_+q^n}{z_-};q\right) z_-q^k (z_-q^{k+1}/z_+;q)_\infty \\
&= \de_{mn}\, z_+ \left(-\frac{z_-}{z_+} \right)^n q^{-\frac12n(n+1)}  \te(z_-/z_+;q) ,
\end{split}
\]
and from the dual orthogonality relations we find
\begin{gather*}
\sum_{n \in \Z} S_k\left(-\frac{z_+q^n}{z_-};q\right) M_l\left(-\frac{z_+q^n}{z_-};q\right) \left(-\frac{z_+}{z_-} \right)^n q^{\frac12n(n+1)}  = 0, \\
\sum_{n \in \Z} S_k\left(-\frac{z_+q^n}{z_-};q\right) S_l\left(-\frac{z_+q^n}{z_-};q\right) \left(-\frac{z_+}{z_-} \right)^n q^{\frac12n(n+1)} = \de_{kl}\, q^{-k} (q^{k+1};q)_\infty \te(z_-/z_+;q),\\
\begin{split}
\sum_{n \in \Z} M_k^{(-z_+/z_-)}\left(-\frac{z_+q^n}{z_-};q\right) M_l^{(-z_+/z_-)}\left(-\frac{z_+q^n}{z_-};q\right) &\left(-\frac{z_+}{z_-} \right)^n q^{\frac12n(n+1)} = \\ &\de_{kl}\, \left(-\frac{z_+}{z_-}\right) \frac{ \te(z_-/z_+;q) }{ q^k  (z_-q^{k+1}/z_+;q)_\infty }.
\end{split}
\end{gather*}
These relations are proved by Christiansen and Koelink in \cite[Theorem 3.5]{ChK}.

\subsection{Coupling coefficients}
In Section \ref{ssec:CGCtausi} we diagonalized $\mathcal T(\rho_{\tau,\si})$ by considering the action on basis elements $v_{y}^{\tau,\infty} \tensor v_{z}^{\infty,\si}$. We can also diagonalize $\mathcal T(\rho_{\tau,\si})$ using the actions of $\al_{\tau,\infty}$, $\be_{\tau,\infty}$, $\ga_{\tau,\infty}$, $\de_{\tau,\infty}$, on the orthonormal basis $\{V_{y,z}^{\tau,\infty} \mid y \in -q^{2\N}\cup q^{2\tau+2\N},\ z \in -q^{2\Z}\cup q^{2\tau+2\Z}\}$ of $\ell^2(\N)\tensor \ell^2(\N)$, see Propositions \ref{prop:eigenvector V tau infty} and \ref{prop:T action V tau infty}. We find
\[
\begin{split}
\mathcal T&(\rho_{\tau,\infty}) V_{y,z}^{\tau,\infty} = \\
&\sqrt{(1+yq^2)(1-yq^{2-2\tau})} V_{yq^2,z/q^2}^{\tau,\infty} + (q^{1-\si-\tau}-q^{1+\si-\tau})y V_{y,z}^{\tau,\infty} + \sqrt{(1+y)(1-yq^{-2\tau})} V_{y/q^2,zq^2}^{\tau,\infty}.
\end{split}
\]
Comparing this with Proposition \ref{prop:intertwiner for rho tau si} we obtain the following result.
\begin{prop} \label{prop:intertwiner vartheta}
Define for $z \in -q^{2\Z}\cup q^{2\tau+2\Z}$ the subspace
\[
\mathcal H_z' = \overline{\mathrm{span}}\left\{ V_{\nu q^{2n},zq^{-2n}}^{\tau,\infty} \mid n \in \N,\ \nu \in \{-1,q^{2\tau}\} \right\} \subset \ell^2(\N) \tensor \ell^2(\N),
\]
then the unitary operator $\vartheta :  \mathcal H_z' \to L^2(I_{-1}^{\tau,\si}) \oplus L^2(I_{q^{2\tau}}^{\tau,\si})$ defined by
\[
\vartheta V_{\nu q^{2n},z q^{-2n} }^{\tau,\infty} (x) = m_{x,\nu q^{2n}}^{\tau,\si},
\]
intertwines $\mathcal T(\rho_{\tau,\si})|_{\mathcal H_z'}$ with multiplication operator $M$ on $L^2(I_{-1}^{\tau,\si}) \oplus L^2(I_{q^{2\tau}}^{\tau,\si})$.
\end{prop}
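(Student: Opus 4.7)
The plan is to reduce the statement to the Al-Salam--Chihara diagonalization already obtained in Proposition \ref{prop:intertwiner for rho tau si}, by showing that $\mathcal T(\rho_{\tau,\si})$ acts on the generators of $\mathcal H_z'$ through exactly the same three-term recurrence as $\pi_\phi(\rho_{\tau,\si})$ acts on the generators of $\ell^2(\N)$ diagonalized there. Once this is established, the unitary $\vartheta$ is obtained by transporting $\Te^{\tau,\si}$ through the identification $V_{\nu q^{2n},\,zq^{-2n}}^{\tau,\infty} \longleftrightarrow v_{\nu q^{2n}}^{\tau,\infty}$, using that $\{V^{\tau,\infty}_{y,z}\}$ is an orthonormal basis of $\ell^2(\N)\tensor\ell^2(\N)$ (Proposition \ref{prop:eigenvector V tau infty}).

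Concretely, I would first use the identity \eqref{eq:rho si tau 2}, which expresses $2\rho_{\tau,\si}$ as an affine combination of $\rho_{\tau,\infty}$ and the products $\be_{\tau+1,\infty}\de_{\tau,\infty}$ and $\al_{\tau+1,\infty}\ga_{\tau,\infty}$. Applying Proposition \ref{prop:T action V tau infty} twice (noting that $V_{y,z}^{\tau,\infty}$ is a $\mathcal T(\rho_{\tau,\infty})$-eigenvector with eigenvalue $y$, and taking $\phi=\psi=0$ as specified at the start of Section \ref{sec:twofold}), one finds after routine bookkeeping
\begin{equation*}
\begin{split}
2\,\mathcal T(\rho_{\tau,\si})\, V_{y,z}^{\tau,\infty} = &\ \sqrt{(1+yq^2)(1-yq^{2-2\tau})}\, V_{yq^2,\,z/q^2}^{\tau,\infty} \\
&+ (q^{1-\si-\tau}-q^{1+\si-\tau})\,y\, V_{y,z}^{\tau,\infty} + \sqrt{(1+y)(1-yq^{-2\tau})}\, V_{y/q^2,\,zq^2}^{\tau,\infty},
\end{split}
\end{equation*}
which is precisely the three-term recurrence displayed just before the proposition. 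The main thing to verify is that this action stabilizes $\mathcal H_z'$: writing $y=\nu q^{2n}$ with $\nu\in\{-1,q^{2\tau}\}$, the three neighboring vectors are $V_{\nu q^{2(n\pm 1)},\,zq^{-2(n\pm 1)}}^{\tau,\infty}$ and $V_{\nu q^{2n},\,zq^{-2n}}^{\tau,\infty}$, all of which lie in $\mathcal H_z'$ by definition.

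Now compare this recurrence with \eqref{eq:actionrhotausi}, the recurrence satisfied by $v_y^{\tau,\infty}$ under $2\pi_\phi(\rho_{\tau,\si})$ (with $\phi=0$): the two recurrences are identical, term by term. Since both $\{V_{\nu q^{2n},\,zq^{-2n}}^{\tau,\infty}\}_{n\in\N,\,\nu}$ (as a subset of an orthonormal basis) and $\{v_{\nu q^{2n}}^{\tau,\infty}\}_{n\in\N,\,\nu}$ are orthonormal, the linear map $U: \mathcal H_z' \to \ell^2(\N)$ defined by $U V_{\nu q^{2n},\,zq^{-2n}}^{\tau,\infty} = v_{\nu q^{2n}}^{\tau,\infty}$ is unitary and intertwines $\mathcal T(\rho_{\tau,\si})|_{\mathcal H_z'}$ with $\pi_0(\rho_{\tau,\si})$. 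Composing with the unitary $\Te^{\tau,\si}$ from Proposition \ref{prop:intertwiner for rho tau si} (which at $\phi=0$ sends $v_y^{\tau,\infty}$ to $m_{\cdot,y}^{\tau,\si}$) yields $\vartheta = \Te^{\tau,\si}\circ U$, and by construction $\vartheta V_{\nu q^{2n},\,zq^{-2n}}^{\tau,\infty}(x)= m_{x,\nu q^{2n}}^{\tau,\si}$, with $\vartheta$ intertwining $\mathcal T(\rho_{\tau,\si})|_{\mathcal H_z'}$ with multiplication by $\mu_x$. The only calculational obstacle is the tidy verification of the three-term recurrence, which is essentially bookkeeping with the square-root factors and the $q$-powers in \eqref{eq:rho si tau 2} and Proposition \ref{prop:T action V tau infty}.
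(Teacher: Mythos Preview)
Your proposal is correct and follows essentially the same route as the paper. The paper derives the three-term recurrence for $\mathcal T(\rho_{\tau,\si})$ on $V_{y,z}^{\tau,\infty}$ from \eqref{eq:rho si tau 2} and Proposition \ref{prop:T action V tau infty}, then simply says ``comparing this with Proposition \ref{prop:intertwiner for rho tau si}''; you make that comparison explicit by constructing the unitary $U:V_{\nu q^{2n},zq^{-2n}}^{\tau,\infty}\mapsto v_{\nu q^{2n}}^{\tau,\infty}$ and composing with $\Te^{\tau,\si}$, which is exactly the intended meaning.
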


In terms of generalized eigenvectors, Proposition \ref{prop:intertwiner vartheta} says that
\begin{equation} \label{eq:eigenvectorUtausi}
U_{x,z}^{\tau,\si;\nu}=\sum_{n \in \N}  m_{x,\nu q^{2n}}^{\tau,\si} V_{\nu q^{2n},zq^{-2n}}^{\tau,\infty},\qquad \nu =-1,q^{2\tau},
\end{equation}
are generalized eigenvectors of $\mathcal T(\rho_{\tau,\si})$ for eigenvalue $\mu_x$, $x \in \T$. For $\mu_x$ in the discrete spectrum, these are genuine eigenvectors.

From here on  we assume $x \in \T$ in this section. Since $V_{x,y}^{\tau,\si}$ defined by \eqref{eq:eigenvector Vxztausi} are also generalized eigenvectors of $\mathcal T(\rho_{\tau,\si})$ for eigenvalue $\mu_x$ there exists a (formal) expansion
\begin{equation} \label{eq:U=sum SV}
\begin{pmatrix}
U_{x,z}^{\tau,\si;-1} \\
U_{x,z}^{\tau,\si;q^{2\tau}}
\end{pmatrix}
= \sum_{y \in -q^{2\Z} \cup q^{2\si + 2\Z}} S_{x;y,z}^{\tau,\si}
\begin{pmatrix}
V_{x,y}^{\tau,\si;-1} \\
V_{x,y}^{\tau,\si;q^{2\tau}}
\end{pmatrix}
\end{equation}
where the coupling coefficients $S_{x,y,z}^{\tau,\si}$ are $2\times 2$-matrices;
\[
S_{x;y,z}^{\tau,\si} =\left(\big(S_{x;y,z}^{\tau,\si}\big)_{\nu,\la}\right)_{\nu,\la \in \{-1,q^{2\tau}\}}.
\]
Alternatively, $S_{x;y,z}^{\tau,\si}$ is the kernel of the unitary integral operator $S:L^2(I_{-1}^{\tau,\si}) \oplus L^2(I_{q^{2\tau}}^{\tau,\si}) \to L^2(I^{\tau,\si}_{-1,z}) \oplus L^2(I^{\tau,\si}_{q^{2\tau},z})$ that satisfies $(S \circ \vartheta f)(x) = \Upsilon f(x)$ (almost everywhere), where $\vartheta$ and $\Upsilon$ are the operators defined in Propositions \ref{prop:intertwiner vartheta} and \ref{prop:eigenvector V tau infty}. The unitarity of $S$ is equivalent to the matrix orthogonality relations
\begin{equation} \label{eq:orthogonalityS}
\begin{split}
\sum_{ y \in -q^{2\Z} \cup q^{2\si+2\Z} }S_{x;y,z_1}^{\tau,\si} (S_{x;y,z_2}^{\tau,\si} )^*& = \de_{z_1z_2} I, \\
\sum_{ z \in -q^{2\Z} \cup q^{2\tau+2\Z}} (S_{x;y_1,z}^{\tau,\si})^* S_{x;y_2,z}^{\tau,\si} & = \de_{y_1y_2} I.
\end{split}
\end{equation}
In order to determine explicit expressions for the matrix coefficients of $S_{x;y,z}^{\tau,\si}$ we use the following result.
\begin{lemma}
Let $\xi \in \{-1,q^{2\tau}\}$, $k \in \Z$ and $m,n \in \N$. Then for $\la =\xi$,
\begin{equation} \label{eq:Sxyz1}
\sum_{y \in -q^{2\Z} \cup q^{2\si + 2\Z}} \big(S_{x;y,\xi q^{2k}}^{\tau,\si}\big)_{\nu,\la} c_{x,\la q^{2n},y}^{\tau,\si} m_{yq^{2n},m}^{\infty,\si} =
m_{x,\nu q^{2m-2n+2k}}^{\tau,\si} c_{\nu q^{2m-2n+2k},\la  q^{2n-2m},m}^{\tau,\infty}.
\end{equation}
Furthermore, $\big(S_{x;y,\xi q^{2k}}^{\tau,\si}\big)_{\nu,\la} = 0 $ for $\la \neq \xi$.
\end{lemma}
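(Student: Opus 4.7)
My plan is to extract the matrix entries $(S_{x;y,z}^{\tau,\si})_{\nu,\la}$ by pairing both sides of \eqref{eq:U=sum SV} with the orthonormal vectors $v_{\la q^{2n}}^{\tau,\infty} \tensor e_m$, which (as $\la \in \{-1,q^{2\tau}\}$, $n,m \in \N$ vary) form an orthonormal basis of $\ell^2(\N)\tensor\ell^2(\N)$. Throughout I shall use that $\phi=\psi=0$, which makes all explicit phase factors in the various eigenvector definitions trivial.

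For the vanishing claim I would insert Proposition~\ref{prop:eigenvector V tau infty} into \eqref{eq:eigenvectorUtausi} to present $U_{x,z}^{\tau,\si;\nu}$ as a double sum of vectors of the form $v_{z q^{-2n'+2m'}}^{\tau,\infty} \tensor e_{m'}$. With $z=\xi q^{2k}$, every first tensor factor that survives has sign $\sgn(\xi)$, so $U_{x,z}^{\tau,\si;\nu}$ lies in the closed subspace
\[
\mathcal K_\xi := \overline{\mathrm{span}}\{v_{\xi q^{2K}}^{\tau,\infty} \tensor e_M \mid K,M \in \N\},
\]
whereas $V_{x,y}^{\tau,\si;\la}$ lies in $\mathcal K_\la$ by \eqref{eq:eigenvector Vxztausi}. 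Since $\mathcal K_{-1}\perp \mathcal K_{q^{2\tau}}$, this forces $(S_{x;y,\xi q^{2k}}^{\tau,\si})_{\nu,\la} = 0$ for $\la \neq \xi$.

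For \eqref{eq:Sxyz1} (case $\la=\xi$) I would then take $\langle \cdot,\, v_{\la q^{2n}}^{\tau,\infty} \tensor e_m\rangle$ of both sides of \eqref{eq:U=sum SV}. On the LHS, orthonormality of $\{v_\cdot^{\tau,\infty}\}$ and $\{e_\cdot\}$ forces $m'=m$ and $n'=k+m-n$, leaving the single surviving term $\overline{m_{x,\nu q^{2(m+k-n)}}^{\tau,\si}}\;\overline{c_{\nu q^{2(m+k-n)},\, \la q^{2(n-m)},\, m}^{\tau,\infty}}$. On the RHS the inner product of $V_{x,y}^{\tau,\si;\la'}$ with the same vector picks out $\la'=\la$ and $n'=n$, producing the sum $\sum_y (S_{x;y,\xi q^{2k}}^{\tau,\si})_{\nu,\la}\,c_{x,\la q^{2n},y}^{\tau,\si}\,\overline{m_{yq^{2n},m}^{\infty,\si}}$; this is the desired identity up to complex conjugates. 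The main obstacle is then a disciplined reconciliation of these conjugates: for $x\in\T$ both $m_{x,\cdot}^{\tau,\si}$ and $c_{x,\cdot,\cdot}^{\tau,\si}$ are real (weighted Askey--Wilson-type polynomials in $\mu_x$ with real parameters), while $c^{\tau,\infty}_{\cdot,\cdot,m}$ carries a phase $(\sgn(\cdot)i)^m$ and $m^{\infty,\si}_{\cdot,m}$ a phase $i^m$, each multiplying a real factor; using $\overline{i}=-i$ one checks $\overline{c^{\tau,\infty}}=(-1)^m c^{\tau,\infty}$ and $\overline{m^{\infty,\si}}=(-1)^m m^{\infty,\si}$, so a common factor $(-1)^m$ appears on both sides and cancels, leaving exactly \eqref{eq:Sxyz1}. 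Modulo this phase bookkeeping the whole argument is a routine change-of-basis computation between two orthonormal bases diagonalising $\mathcal T(\rho_{\tau,\si})$.
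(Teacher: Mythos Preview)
Your approach is exactly the paper's: take the inner product of both sides of \eqref{eq:U=sum SV} with the orthonormal vectors $v_{\la q^{2n}}^{\tau,\infty}\tensor e_m$. The subspace argument for the vanishing when $\la\neq\xi$ is a clean way to phrase the same observation.

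One minor remark: your bookkeeping of complex conjugates is unnecessary. With $\phi=\psi=0$, the expansions
\[
U_{x,z}^{\tau,\si;\nu}=\sum_{n',m'} m_{x,\nu q^{2n'}}^{\tau,\si}\,c_{\nu q^{2n'},zq^{-2n'},m'}^{\tau,\infty}\,v_{zq^{2m'-2n'}}^{\tau,\infty}\tensor e_{m'},
\qquad
V_{x,y}^{\tau,\si;\la}=\sum_{n'} c_{x,\la q^{2n'},y}^{\tau,\si}\,v_{\la q^{2n'}}^{\tau,\infty}\tensor v_{yq^{2n'}}^{\infty,\si},
\]
have the $v_{\cdot}^{\tau,\infty}\tensor e_{\cdot}$ (resp.\ $v_{\cdot}^{\tau,\infty}\tensor v_{\cdot}^{\infty,\si}$) as orthonormal bases, so $\langle U, v_{\la q^{2n}}^{\tau,\infty}\tensor e_m\rangle$ and $\langle V, v_{\la q^{2n}}^{\tau,\infty}\tensor e_m\rangle$ pick out the expansion coefficients directly, without conjugation. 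Your spurious conjugates happen to cancel (via the common $(-1)^m$ you identify), so the conclusion is unaffected, but the cleanest route has none at all.
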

\begin{proof}
This follows from \eqref{eq:U=sum SV} by taking inner products with the orthonormal basis vectors \mbox{$v_{\la q^{2n}}^{\tau,\infty} \tensor e_m$}.
\end{proof}

\begin{thm} \label{thm:Stausi}
Let $x \in \T$, $y \in -q^{2\Z}\cup q^{2\si+2\Z}$, $\la,\nu, \xi \in \{-1,q^{2\tau}\}$, and $z = \xi q^{2k}$ with $k \in \Z$.
\begin{enumerate}[(i)]
\item If $\xi \neq \la$, then $S_{x;y,z}^{\tau,\si;\nu,\la} = 0$.
\item If $\xi = \la$, then
\[
\big(S_{x;y,z}^{\tau,\si}\big)_{\nu,\la} =
\begin{cases}
\displaystyle (\sgn(\nu))^k \sqrt{ \frac{|y| w(y;c,d;q^2)}{ h_k^P(c,d;z_-,z_+;q^2)} } P_k(y;c,d;q^2), & \text{if}\ \nu \neq \la,\\ \\
\displaystyle (\sgn(\nu))^k\sqrt{ \frac{|y| w(y;c,d;q^2)}{ h_k^Q(c,d;z_-,z_+;q^2)} } Q_k(y;c,d;z_-,z_+;q^2), & \text{if}\ \nu = \la,
\end{cases}
\]
with
\[
(c,d,z_-,z_+)=
\begin{cases}
(-\nu q^{1-\tau-\si}/x,\  -\nu x q^{1-\tau-\si}, \ -1, \ q^{2\si}), & \text{if} \ \nu \neq \la,\\
(xq^{1+\tau-\si}/\nu, \ q^{1+\tau-\si}/\nu x ,  -1, \  q^{2\si}),& \text{if}\ \nu = \la.
\end{cases}
\]
\end{enumerate}
\end{thm}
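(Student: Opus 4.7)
Part (i) is the vanishing statement at the end of the lemma just above the theorem. For part (ii), my plan is to substitute the proposed formulas for $(S_{x;y,z}^{\tau,\si})_{\nu,\la}$ into the defining identity of that lemma (specialised to $n=0$) and verify it by evaluating the resulting Jackson $q^{2}$-integral via Proposition~\ref{prop:integralPQ}.

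Writing $z=\la q^{2k}$ and specialising the lemma to $n=0$, the identity to verify is
\[
\sum_{y\in -q^{2\Z}\cup q^{2\si+2\Z}} (S_{x;y,z}^{\tau,\si})_{\nu,\la}\, c_{x,\la,y}^{\tau,\si}\, m_{y,m}^{\infty,\si} \;=\; m_{x,\nu q^{2k+2m}}^{\tau,\si}\, c_{\nu q^{2k+2m},\la q^{-2m},m}^{\tau,\infty}, \qquad m\in\N.
\]
Since $\{y\mapsto m_{y,m}^{\infty,\si}\}_{m\in\N}$ is an orthonormal basis of $\ell^{2}(-q^{2\N}\cup q^{2\si+2\N})$ by dual orthogonality of the Al-Salam--Carlitz II polynomials, these identities are the Fourier expansion of $y\mapsto (S_{x;y,z}^{\tau,\si})_{\nu,\la}\,c_{x,\la,y}^{\tau,\si}$ in this basis and therefore determine the coupling coefficient uniquely.

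The crucial algebraic observation is that for the parameters $(c,d)$ specified in the theorem one has
\[
(cy,dy;q^{2})_\infty \;=\; (c'x,c'/x;q^{2})_\infty, \qquad c' \;=\; yq^{1+\tau-\si}/\la,
\]
verified by a direct case-by-case check in each of the four combinations $\nu,\la\in\{-1,q^{2\tau}\}$; this is in fact the reason for the specific choice of $(c,d)$ in the statement. After converting $\sum_y$ to the Jackson $q^{2}$-integral $\tfrac{1}{1-q^{2}}\int_{\infty(-1)}^{\infty(q^{2\si})}(\cdots)/|y|\,d_{q^{2}}y$, this identity cancels the square-roots coming from $S$ and from $c_{x,\la,y}^{\tau,\si}$, and the surviving $y$-dependent factor from $m_{y,0}^{\infty,\si}$ is exactly $(q^{2}y/z_-,q^{2}y/z_+;q^{2})_\infty$ with $(z_-,z_+)=(-1,q^{2\si})$. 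The $m=0$ integrand then fits the hypothesis of Proposition~\ref{prop:integralPQ}, and the integral evaluates to an explicit ${}_{2}\varphi_{1}$-series; a direct parameter comparison with \eqref{eq:Al-Salam--Chihara} identifies this series with $q_{k}(\mu_{x};-\nu q^{1-\tau+\si},\nu q^{1-\tau-\si}|q^{2})$, which is the polynomial part of $m_{x,\nu q^{2k}}^{\tau,\si}$.

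For $m\ge 1$, the Al-Salam--Carlitz polynomial $U_{m}^{(-q^{2\si})}(-y;q^{2})$ inside $m_{y,m}^{\infty,\si}$ contributes an additional polynomial factor under the integral sign, and Proposition~\ref{prop:integralPQ} again yields a ${}_{2}\varphi_{1}$ that now matches the product of the Al-Salam--Chihara polynomial and the big $q^{2}$-Laguerre polynomial on the right-hand side. The off-diagonal case $\nu\ne\la$ uses the $P_k$-formula and the diagonal case $\nu=\la$ uses the $Q_k$-formula of Proposition~\ref{prop:integralPQ}. The main technical obstacle is the bookkeeping of $\theta$-products, the signs $\sgn(\nu)^{k}$, and the phase factors $(\sgn(y)i)^{n}$ in the definition of $c^{\tau,\infty}_{\cdot,\cdot,\cdot}$, all of which have to line up so that the explicit normalisation constants on the two sides agree.
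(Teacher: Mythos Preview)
Your plan coincides with the paper's for the $n=m=0$ step: the paper also specialises \eqref{eq:Sxyz1} to $n=m=0$, rewrites the sum over $y$ as a Jackson integral $\int_{-1}^{q^{2\si}}(\cdots)\,d_{q^2}y$ (using that $(-yq^2,yq^{2-2\si};q^2)_\infty$ kills $y\notin -q^{2\N}\cup q^{2\si+2\N}$), and then reads off the candidate for $(S^{\tau,\si}_{x;y,z})_{\nu,\la}$ from Proposition~\ref{prop:integralPQ}. Your observation that $(cy,dy;q^2)_\infty$ matches the square of the continuous dual $q$-Hahn weight factor in $c^{\tau,\si}_{x,\la,y}$ is exactly the simplification the paper carries out.

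The gap is your treatment of $m\ge 1$. Proposition~\ref{prop:integralPQ} evaluates $\int_{z_-}^{z_+}P_k(y)\,\tfrac{(qy/z_-,qy/z_+;q)_\infty}{(cy,dy;q)_\infty}\,d_qy$ and the analogous $Q_k$ integral; it does \emph{not} evaluate these integrals with an extra factor $U_m^{(-q^{2\si})}(-y;q^2)$ in the integrand, and the result is certainly not a single ${}_2\varphi_1$. In fact, the identity you would need for general $m$ (with $n=0$) is precisely Theorem~\ref{thm:identityCC}, which in the paper is \emph{deduced from} the formula for $S^{\tau,\si}_{x;y,z}$, not used to prove it. So your $m\ge 1$ step is either unjustified or circular. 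There is also a smaller issue with your uniqueness argument: the basis $\{m^{\infty,\si}_{y,m}\}_{m\in\N}$ spans $\ell^2(-q^{2\N}\cup q^{2\si+2\N})$, so verifying \eqref{eq:Sxyz1} at $n=0$ for all $m$ only pins down $S$ on that subset of $y$, not on all of $-q^{2\Z}\cup q^{2\si+2\Z}$.

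The paper avoids both problems by working only at $n=m=0$ and then supplying a separate argument: it checks directly that the candidate kernel satisfies the matrix orthogonality relations \eqref{eq:orthogonalityS}, using the orthogonality \eqref{eq:orthogonalityPQ} of the $P_k$ and $Q_k$. This step, which you omit, is what the paper uses in place of your (incomplete) Fourier-expansion uniqueness argument.
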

Note that $\la = -q^{2\tau}/\nu$ in case $\nu \neq \la$.
\begin{proof}
The case $\xi \neq \la$ is already proved, so we assume $\xi = \la$.
We first show that \eqref{eq:Sxyz1} is satisfied. The coefficients $\big(S_{x;y,\xi q^{2k}}^{\tau,\si}\big)_{\nu,\la}$ do not depend on $m$ and $n$, so we may choose $m$ and $n$ in a convenient way. We choose $n=m=0$, then for $z=\la q^{2k}$ with $k \in \N$ the explicit identity is, after canceling common terms,
\[
\begin{split}
\sum_{y \in -q^{2\Z} \cup q^{2\si + 2\Z}} & \big(S_{x;y,z}^{\tau,\si}\big)_{\nu,\la} \frac{|y|^{\frac12} (-yq^2, yq^{2-2\si};q^2)_\infty }{ (yq^{1+\tau-\si} x^{\pm 1}/\la;q^2)_\infty^{\frac12} } \\
=& \left(\frac{\te(-q^{2\si};q^2) (-\la q^{1-\tau+\si}x^{\pm 1},\la q^{1-\tau-\si} x^{\pm 1}, -\la q^{2}, \la q^{2-2\tau};q^2)_\infty }{ \te(-q^{2\tau+2};q^2)(-\la^2 q^{2-2\tau};q^2)_\infty } \right)^{\frac12}\\
& \times\left( \frac{|\nu|q^{2k-2\tau} (q^{2+2k},-\nu^2 q^{2-2\tau+2k}, \nu q^{2k-2\tau+2}, -\nu q^{2k+2};q^2)_\infty }{(q^2,-\nu q^{\si-\tau+1} x^{\pm 1}, \nu q^{1-\si-\tau} x^{\pm 1}, -\nu \la q^{2k-2\tau+2};q^2)_\infty } \right)^{\frac12}  \\
& \times   (\nu q^{1-\si-\tau}/x;q^2)_k x^k \rphis{2}{1}{q^{-2k}, -\nu x q^{\si -\tau +1}}{xq^{1+\si+\tau-2k}/\nu}{q^2, \frac{q^{1+\tau+\si}}{\nu x}}.
\end{split}
\]
For $k\in -\N_{\geq 1}$ the right hand side is equal to zero.
Note that $(-yq^2, yq^{2-2\si};q^2)_\infty=0$ for $y \not\in -q^{\N}\cup q^{2\si+2\N}$, so we can write the sum as a $q$-integral of the form $\int_{-1}^{q^{2\si}} f(y)\, d_{q^2}y$. Expressions for $\big(S_{x;y,z}^{\tau,\si}\big)_{\nu,\la}$ are now obtained from Proposition \ref{prop:integralPQ} with parameters
\[
(c,d,z_-,z_+)=
\begin{cases}
(-\nu q^{1-\tau-\si}/x,\  -\nu x q^{1-\tau-\si}, \ -1, \ q^{2\si}), & \nu \neq \la,\\
(xq^{1+\tau-\si}/\nu, \ q^{1+\tau-\si}/\nu x ,  -1, \  q^{2\si}),& \nu = \la.
\end{cases}
\]
The expressions obtained in this way can be simplified using the $\te$-product identity \eqref{eq:te-product} and
\begin{equation} \label{eq:handy identities}
\frac{q^{2\tau}}{|\la|} \te(-q^{2+2\tau};q^2) = \te(-\la^2q^{2-2\tau};q^2), \qquad
\frac{ (q^2,-\la^2 q^{2-2\tau};q^2)_k }{ (\la q^{2-2\tau}, -\la q^2;q^2)_k } =1.
\end{equation}

Next we show $S_{x;y,z}^{\tau,\si}$ is indeed the kernel in a unitary operator by verifying the orthogonality relations \eqref{eq:orthogonalityS}. First we assume $z_i  \in -q^{2\Z}$ for $i = 1,2$, then
\[
\begin{split}
\sum_{ y \in -q^{2\Z} \cup q^{2\si+2\Z} }& S_{x;y,z_1}^{\tau,\si} (S_{x;y,z_2}^{\tau,\si} )^* \\
& = \sum_{ y \in -q^{2\Z} \cup q^{2\si+2\Z} }
    \begin{pmatrix}
    \big(S_{x;y,z_1}^{\tau,\si}\big)_{-1,-1} & 0 \\ \big(S_{x;y,z_1}^{\tau,\si}\big)_{q^{2\tau},-1} & 0
    \end{pmatrix}
    \begin{pmatrix}
    \big(S_{x;y,z_2}^{\tau,\si}\big)_{-1,-1} & 0 \\ \big(S_{x;y,z_2}^{\tau,\si}\big)_{q^{2\tau},-1} & 0
    \end{pmatrix}^* \\
& = \sum_{ y \in -q^{2\Z} \cup q^{2\si+2\Z} }
\begin{pmatrix}
\big(S_{x;y,z_1}^{\tau,\si}\big)_{-1,-1}\cdot \big(S_{x;y,z_2}^{\tau,\si}\big)_{-1,-1}\ & \big(S_{x;y,z_1}^{\tau,\si}\big)_{-1,-1} \cdot\big(S_{x;y,z_2}^{\tau,\si}\big)_{q^{2\tau},-1} \\
\big(S_{x;y,z_1}^{\tau,\si}\big)_{q^{2\tau},-1}\cdot \big(S_{x;y,z_2}^{\tau,\si}\big)_{-1,-1}\ & \big(S_{x;y,z_1}^{\tau,\si} \big)_{q^{2\tau},-1}\cdot \big(S_{x;y,z_2}^{\tau,\si}\big)_{q^{2\tau},-1}
\end{pmatrix} \\
& = \de_{z_1z_2} \begin{pmatrix} 1 & 0 \\ 0 & 1 \end{pmatrix},
\end{split}
\]
where the last line follows from writing the matrix coefficients in terms of the functions $P_k$ and $Q_k$ and using the orthogonality relations \eqref{eq:orthogonalityPQ}. E.g., with $z_i=-q^{2k_i}$,
\[
\begin{split}
&\sum_{ y \in -q^{2\Z} \cup q^{2\si+2\Z} }\big(S_{x;y,z_1}^{\tau,\si}\big)_{-1,-1} \cdot \big(S_{x;y,z_2}^{\tau,\si}\big)_{q^{2\tau},-1} = \\& \quad K\int_{\infty(-1)}^{\infty(q^{2\si})} Q_{k_1}(y;c,d;-1,q^{2\si};q^2)P_{k_2}(y;c,d;q^2) w(y;c,d;q^2)\, d_{q^2}y = 0,
\end{split}
\]
where $(c,d) = (-xq^{1+\tau-\si},-q^{1+\tau-\si}/x)$ and $K$ is independent of $y$. The cases involving $z_i \in q^{2\tau+2\Z}$ are proved in the same way.

The dual orthogonality relations are also proved in this way, now using the dual orthogonality relations for $P_k$ and $Q_k$.
\end{proof}

As a result we obtain the following $q$-integral evaluation formulas involving Al-Salam--Carlitz II polynomials $P_n$ \eqref{eq:defPn} and functions $Q_n$ \eqref{eq:defQn}, continuous dual $q$-Hahn polynomials $p_n$ \eqref{eq:ContinuousDualqHahn}, Al-Salam--Chihara polynomials $q_n$ \eqref{eq:Al-Salam--Chihara}, big $q$-Laguerre polynomials $L_n$ \eqref{eq:bigqLaguerre}, and Al-Salam--Carlitz polynomials $U_n$ \eqref{eq:Al-Salam--Carlitz pol}.
\begin{thm} \label{thm:identityCC}
Let $x\in \T$, $s,t>0$, $\nu \in \{-1,t\}$, and $m,n \in \N$. For $k \in \N$,
\[
\begin{split}
\int_{-q^{-n}}^{sq^{-n}}& p_n(\mu_x;\tfrac1\nu\sqrt{qst},-\tfrac1\nu \sqrt{qt/s},-\nu y \sqrt{q/st};q)\,
U_m^{(-s)}(-yq^n;q)\\
& \times  P_k(y;-\nu x \sqrt{q/st}, -\tfrac{\nu}{x} \sqrt{q/st};q)\, \frac{ (-yq^{1+n},yq^{1+n}/s;q)_\infty }{(- y \nu x^{\pm 1} \sqrt{q/st};q)_\infty } d_q y \\
= & \,(1-q) (-1)^m \nu^{m+k} (q/st)^{k/2} (s/qt)^{m/2} q^{-n} q^{m(m-n)} q^{-\frac12k(k-1)} \\
& \times \frac{ (tq^{n-m+1}/\nu, - q^{n-m+1}/\nu;q)_m (q^{m-n+k+1},-\nu^2 q^{m-n+k+1}/t ;q)_\infty } {(-\nu x^{\pm 1} \sqrt{qs/t}, \nu x^{\pm 1} \sqrt{q/st};q)_\infty } \\
& \times q_{m-n+k}(\mu_x;-\nu \sqrt{qs/t}, \nu \sqrt{q/st};q)\, L_m(q^{k+1};tq^{n-m}/\nu, - q^{n-m}/\nu;q)
\end{split}
\]
and for $k \in \Z$
\[
\begin{split}
\int_{-q^{-n}}^{sq^{-n}}& p_n(\mu_x;-\nu\sqrt{qs/t},\nu \sqrt{q/st},\tfrac{y}{\nu} \sqrt{qt/s};q)\,
U_m^{(-s)}(-yq^n;q)\\
& \times  Q_k(y;\tfrac{x}{\nu}\sqrt{qt/s}, -\tfrac{1}{\nu x} \sqrt{qt/s};-1,s;;q)\, \frac{ (-yq^{1+n},yq^{1+n}/s;q)_\infty }{(- \tfrac{y x^{\pm 1}}{\nu} \sqrt{qt/s};q)_\infty } d_q y \\
= & \,(1-q) \nu^{-m-k} (t/qs)^{k/2} (st/q)^{m/2} q^{-n} q^{m(m-n)} q^{-\frac12k(k-1)} \\
& \times \frac{ (-\nu q^{n-m+1}, -\nu q^{n-m+1}/t;q)_m (q^{m-n+k+1},-\nu^2 q^{m-n+k+1}/t ;q)_\infty } {(-\nu^2 q^{k+1}/t;q)_\infty } \\
& \times q_{m-n+k}(\mu_x;-\nu \sqrt{qs/t}, \nu \sqrt{q/st};q)\, L_m(-\nu^2 q^{k+1}/t;-\nu q^{n-m},  \nu q^{n-m}/t;q).
\end{split}
\]
\end{thm}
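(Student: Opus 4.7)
The plan is to read off Theorem~\ref{thm:identityCC} from the matrix-coefficient identity \eqref{eq:Sxyz1}, now that every ingredient on both sides is explicit.

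First I would take \eqref{eq:Sxyz1}, which was obtained by pairing the expansion \eqref{eq:U=sum SV} against $v_{\la q^{2n}}^{\tau,\infty}\tensor e_m$, and substitute the two cases of Theorem~\ref{thm:Stausi}(ii). For the first $q$-integral identity in Theorem~\ref{thm:identityCC} I would take $\nu\neq\la$ and insert the $P_k$-expression for $(S^{\tau,\si}_{x;y,z})_{\nu,\la}$; for the second identity I would take $\nu=\la$ and insert the $Q_k$-expression. The remaining factors are already named in the text: $c^{\tau,\si}_{x,\la q^{2n},y}$ is a normalized continuous dual $q^2$-Hahn polynomial (producing the factor $p_n$ in the statement), $m^{\infty,\si}_{yq^{2n},m}$ is a normalized Al-Salam--Carlitz polynomial (producing $U_m^{(-s)}$), $m^{\tau,\si}_{x,\nu q^{2(m-n+k)}}$ is a normalized Al-Salam--Chihara polynomial (producing $q_{m-n+k}$), and $c^{\tau,\infty}_{\nu q^{2(m-n+k)},\la q^{2(n-m)},m}$ is a normalized big $q^2$-Laguerre polynomial (producing $L_m$).

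Second, I would set $t=q^{2\tau}$, $s=q^{2\si}$, shift the summation variable $y\mapsto yq^{2n}$ so that the summation set $-q^{2\Z}\cup q^{2\si+2\Z}$ turns into $-q^{-n}q^{2\Z}\cup sq^{-n}q^{2\Z}$, and relabel base $q^2\to q$ (all internal formulas are written at base~$q^2$ whereas the theorem is stated at base~$q$). By the definition of the Jackson $q$-integral the sum then becomes $\int_{-q^{-n}}^{sq^{-n}}(\cdot)\,d_q y$, and the weight $(-yq^{1+n},yq^{1+n}/s;q)_\infty / (-y\nu x^{\pm 1}\sqrt{q/st};q)_\infty$ appearing in the theorem emerges from combining the Al-Salam--Carlitz II measure $w(\cdot;c,d;q)$ with the infinite products absorbed into the orthonormalizations $\bar p_n$, $\bar U^{(a)}_n$, $\bar q_n$ and $\bar L_n$.

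Finally I would collect and simplify the remaining scalar prefactors using the $\te$-product identity \eqref{eq:te-product} together with the auxiliary identities \eqref{eq:handy identities}. The main obstacle is purely computational: square-roots of products of theta functions must cancel cleanly, and factors $i^n$, $(-1)^m$, $\sgn(\nu)^k$, $q^{-\frac12 k(k-1)}$ and $q^{m(m-n)}$ have to be tracked through several nested normalizations. There is no new analytic content beyond Theorem~\ref{thm:Stausi}; Theorem~\ref{thm:identityCC} is precisely the two instances of \eqref{eq:Sxyz1} rewritten in standard $q$-hypergeometric notation.
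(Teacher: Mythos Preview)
Your proposal is correct and follows exactly the paper's own argument: the theorem is obtained by writing \eqref{eq:Sxyz1} explicitly in terms of the constituent special functions, substituting $(q^{2\si},q^{2\tau})\mapsto(s,t)$ and replacing $q^2$ by $q$, with the first identity coming from the case $\nu\neq\la$ (the $P_k$-branch of Theorem~\ref{thm:Stausi}) and the second from $\nu=\la$ (the $Q_k$-branch). Your identification of each factor with the relevant orthogonal polynomial family and your remark that the only work left is computational bookkeeping match the paper's proof precisely.
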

\begin{proof}
This follows from writing \eqref{eq:Sxyz1} explicitly in terms of the corresponding special functions, substituting $(q^{2\si},q^{2\tau}) \mapsto (s,t)$ and replacing $q^2$ by $q$. The first $q$-integral identity corresponds to the case $\la \neq \nu$; the second identity corresponds to the case $\la = \nu$.
\end{proof}

We substitute $(x,\la,\nu) \mapsto (-q^x\sqrt{q/st},-1,-1)$ in the second identity of Theorem \ref{thm:identityCC}, and let $t \to 0$. This limit corresponds to $\rho_{\tau,\si} \to \rho_{\infty,\si}$ in $\mathcal A_q$. This gives the following identity involving big $q$-Laguerre polynomials $L_n$ \eqref{eq:bigqLaguerre}, Al-Salam--Carlitz polynomials $U_n$ \eqref{eq:Al-Salam--Carlitz pol}, Wall polynomials \eqref{eq:defWallpol}, Stieltjes-Wigert polynomials $S_n$ \eqref{eq:Stieltjes-Wigert} and related $q$-Bessel functions $M_n$ \eqref{eq:SW-qBessel}.
\begin{cor} \label{cor:identitySW-functions}
For $k,m,n,x \in \N$ and $s>0$,
\[
\begin{split}
&\sum_{y\in \N} L_n(q^{x+y+1};-sq^y,q^y;q)\, U_m^{(-s)}(-sq^{y+n};q)\, S_{x+y}(sq^{k-2x};q)\\
& \qquad \times (q^{y+1};q)_x(-sq^{y+1};q)_\infty (sq^y)^{-n} sq^{y+x}\\
+& \sum_{y \in \N} L_n(-q^{x+y+1}/s;q^y, -q^y/s;q)\, U_m^{(-s)}(q^{y+n};q) \, M_{x+y}^{(s)}(sq^{k-2x};q)\\
& \qquad \times (-q^{y+1}/s;q)_x (q^{y+1};q)_\infty q^{-ny} q^{y+x} \\
= &\ s^{x-k} q^{-x(x-1)} q^{-\frac12n(n+1)} q^{-\frac12k(k-1)} q^{n(k+m)+k(x-m)} \te(-s;q)\\
& \times (q^{n-m+1};q)_m (q^{m-n+k+1};q)_\infty\, U_{m-n+k}^{(-s)}(q^x;q)\, p_m(q^{m-n+k};q^{n-m};q).
\end{split}
\]
\end{cor}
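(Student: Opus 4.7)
\medskip

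\noindent\textbf{Proof proposal.} The plan is exactly the one indicated by the author: start from the second $q$-integral identity of Theorem~\ref{thm:identityCC}, specialize $\nu=\la=-1$ and substitute the spectral variable $x\mapsto -q^x\sqrt{q/st}$ (henceforth $x\in\N$), then send $t\downarrow 0$. In the quantum-group picture this corresponds to passing from $\rho_{\tau,\si}$ to $\rho_{\infty,\si}$ and simultaneously specializing to a point $-\mu_{-q^{1-\tau-\si+2x}}$ in the discrete spectrum; on the level of special functions it turns the continuous $q^2$-Hahn/Al--Salam--Chihara/big-$q$-Laguerre pieces into the Al--Salam--Carlitz / Wall pieces appearing in the corollary, while the Al--Salam--Carlitz II functions $Q_k$ become the $c\to q/z_+$, $d\to 0$ limits analyzed in Section~\ref{ssec:ACSfunctions}.

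The first step is to rewrite the integrand after substitution. Using the $_3\varphi_2$-form of $p_n$ with $a=\sqrt{qs/t}$, $b=-\sqrt{q/st}$, $c=-y\sqrt{qt/s}$ and $x_0=-q^x\sqrt{q/st}$, one finds the key simplifications $ab=-q/t$, $ac=-yq$, $ax_0=-q^{x+1}/t$, $a/x_0=-s/q^x$; the same relations $ax_0=-q^{x+1}/t$, $a/x_0=-s/q^x$, $ab=-q/t$ occur for the Al--Salam--Chihara polynomial $q_{m-n+k}$ on the right hand side. The $Q_k$-factor on the left acquires parameters $(c,d,z_-,z_+)=(q^{x+1}/s,\,-t/q^x,\,-1,\,s)$, so $d\to 0$ while $c$ stays finite but differs from $q/z_+=q/s$ by the explicit factor $q^x$.

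Next I would split the $q$-integral as a sum over its two arms,
\[
\int_{-q^{-n}}^{sq^{-n}}f(y)\,d_qy
=(1-q)\sum_{j\in\N}f(sq^{j-n})\,sq^{j-n}
+(1-q)\sum_{j\in\N}f(-q^{j-n})\,q^{j-n},
\]
reindex $j=y+n$, and take $t\to 0$ inside each sum. Running the computation of Section~\ref{ssec:ACSfunctions} with $c=q^{x+1}/s$ instead of $c=q/s$, the surviving $_1\varphi_1$ becomes $\rphis{1}{1}{q^{-x-k}}{0}{q,\,\cdot}$, which by \eqref{eq:Stieltjes-Wigert} is a multiple of $S_{x+y}(sq^{k-2x};q)$ on the $y=sq^{j-n}$-arm and, by \eqref{eq:SW-qBessel}, a multiple of $M^{(s)}_{x+y}(sq^{k-2x};q)$ on the $y=-q^{j-n}$-arm (using $-z_+/z_-=s$). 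In parallel, on the right hand side the $_3\varphi_2$ in $q_{m-n+k}(\mu_{x_0};\sqrt{qs/t},-\sqrt{q/st};q)$ collapses by the elementary limit $(-q^{x+1}/t;q)_j/(-q/t;q)_j\to q^{xj}$ to the $_2\varphi_0$ defining $U^{(-s)}_{m-n+k}(q^x;q)$, and the big $q$-Laguerre $L_m(-q^{k+1}/t;q^{n-m},-q^{n-m}/t;q)$ collapses by the same mechanism to $p_m(q^{m-n+k};q^{n-m};q)$, exactly as in the limit noted after Proposition~\ref{prop:eigenvector V tau infty}. The contribution of the ``missing'' values $y+n<0$ of $j$ on the big-$q$-Laguerre/Wall side is killed by the factor $(q^{y+1};q)_x$ (resp.\ $(-q^{y+1}/s;q)_x$) entering the $L_n$, which truncates the sum to $y\in\N$.

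The main obstacle is bookkeeping: the prefactors $(qs/t)^{-n/2}$ and $(-q/t;q)_n$ in $p_n$, $(-d)^k\theta(dz_-,dz_+;q)/(q/dy,q^{k+1}y/dz_-z_+;q)_\infty$ in $Q_k$, the analogous factors in $q_{m-n+k}$, and $(-q^{n-m}/t;q)_m$ entering $L_m$ each have non-trivial $t$-dependence, and one must verify that all powers of $t$ cancel. Using the $\theta$-product identity~\eqref{eq:te-product} on $\theta(t/q^x,-st/q^x;q)$ produces the factor $\theta(-s;q)$ (with the remaining $t$-dependence canceling against $(-q/t;q)_n(-q/t;q)_{m-n+k}(-q^{n-m}/t;q)_m$), and a routine count of powers of $s$ and $q$ combines into the asserted prefactor $s^{x-k}q^{-x(x-1)}q^{-\frac12 n(n+1)}q^{-\frac12 k(k-1)}q^{n(k+m)+k(x-m)}\theta(-s;q)$, completing the identification with the right hand side of the corollary.
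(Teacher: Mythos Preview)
Your proposal is correct and follows exactly the approach taken in the paper: substitute $(x,\la,\nu)\mapsto(-q^x\sqrt{q/st},-1,-1)$ in the second identity of Theorem~\ref{thm:identityCC} and let $t\to 0$, interpreting this as the passage $\rho_{\tau,\si}\to\rho_{\infty,\si}$. Your sketch is in fact considerably more detailed than the paper's own one-line justification, and the specific mechanisms you identify (the $Q_k$ parameters $(c,d)=(q^{x+1}/s,-t/q^x)$ forcing the $c\to q^{x+1}/z_+$, $d\to0$ variant of the Section~\ref{ssec:ACSfunctions} limit so that the $_1\varphi_1$ picks up $q^{-x-j}$ and hence $S_{x+y}$, and the $(-q^{x+1}/t;q)_j/(-q/t;q)_j\to q^{xj}$ collapse on the right) are the correct ones.
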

\begin{remark}\*
\begin{enumerate}[(i)]
\item Define the $\mathcal T(\rho_{\infty,\si})$-eigenvectors $U^{\infty,\si}_{x,p} \in \ell^2(\N)\tensor \ell^2(\N)$ by
\[
U_{x,p}^{\infty,\si} = \sum_{n \in \N} m_{x,n}^{\infty,\si} V_{n,p+n}, \qquad p \in \Z,
\]
where $V_{n,p}$ is the eigenvector of $\mathcal T(\ga\ga^*)$ defined by \eqref{eq:eigenvector V infty infty}.
$U^{\infty,\si}_{x,p}$ is an eigenvector of $\mathcal T(\rho_{\infty,\si})$ for eigenvalue $x \in -q^{2\N}\cup q^{2\si+2\N}$.
Corollary \ref{cor:identitySW-functions} shows that the coupling coefficients between $\mathcal T(\rho_{\infty,\si})$-eigenvectors $V^{\infty,\si}_{x,y}$ and $U_{x,p}^{\infty,\si}$ can be expressed in terms of Stieltjes-Wigert polynomials $S_k$ and related $q$-Bessel functions $M_k$.
\item In the $(\infty,\infty)$-case there is no eigenvector similar to $U^{\tau,\si}$ or $U^{\infty,\si}$, so the limit $s \to 0$ in Corollary \ref{cor:identitySW-functions} will not give any interesting identity.
\end{enumerate}
\end{remark}

\bigskip

Similarly as the generalized eigenvector \eqref{eq:eigenvectorUtausi}, we also have the generalized eigenvector
\[
\widetilde U_{x,y}^{\tau,\si;\nu}=\sum_{n \in \N}  m_{x,\nu q^{2n}}^{\si,\tau} V_{\nu q^{2n},yq^{-2n}}^{\infty,\si},\qquad \nu =-1,q^{2\si},
\]
of $\mathcal T(\rho_{\tau,\si})$ for eigenvalue $\mu_x$. Here $y \in -q^{2\Z} \cup q^{2\si+2\Z}$. Essentially this is the generalized eigenvector $U_{x,y}^{\tau,\si;\nu}$ with $\tau \leftrightarrow \si$. We define, for $y \in -q^{2\Z} \cup q^{2\si+2\Z}$, $z \in -q^{2\Z} \cup q^{2\tau+2\Z}$, the $2\times2$-matrix valued coupling coefficients $T_{x;y,z}^{\tau,\si}$ by
\[
\begin{pmatrix}
\widetilde U^{\tau,\si;-1}_{x,y} \\ \widetilde U^{\tau,\si;q^{2\si}}_{x,y}
\end{pmatrix}
= \sum_{z \in -q^{2\Z} \cup q^{2\tau+2\Z} } T^{\tau,\si}_{x;y,z}
\begin{pmatrix}
U^{\tau,\si;-1}_{x,z} \\ U^{\tau,\si;q^{2\tau}}_{x,z}
\end{pmatrix}.
\]
These coupling coefficients can be expressed in terms of the the coupling coefficients $S$ as follows.

We start by expanding $\widetilde U^{\tau,\si}_{x,y}$ in terms of $\widetilde V^{\tau,\si}_{x,z}$, using \eqref{eq:U=sum SV} with $\tau \leftrightarrow \si$;
\[
\begin{split}
\begin{pmatrix}
\widetilde U^{\tau,\si;-1}_{x,y} \\ \widetilde U^{\tau,\si;q^{2\si}}_{x,y}
\end{pmatrix}
&= \sum_{k \in \Z} \widetilde S_{x;-q^{2k},y}^{\tau,\si}
\begin{pmatrix}
\widetilde V_{x,-q^{2k}}^{\tau,\si;-1} \\ \widetilde V_{x,-q^{2k}}^{\tau,\si;q^{2\si}}
\end{pmatrix}
+ \widetilde S_{x;q^{2\tau+2k},y}^{\tau,\si}
\begin{pmatrix}
\widetilde V_{x,q^{2\tau+2k}}^{\tau,\si;-1} \\ \widetilde V_{x,q^{2\tau+2k}}^{\tau,\si;q^{2\si}}
\end{pmatrix} \\
& = \sum_{k \in \Z} \widetilde S_{x;-q^{2k},y}^{\tau,\si}
\begin{pmatrix}
V_{x,-q^{-2k}}^{\tau,\si;-1} \\  V_{x,q^{2\si-2k}}^{\tau,\si;-1}
\end{pmatrix}
+ \widetilde S_{x;q^{2\tau+2k},y}^{\tau,\si}
\begin{pmatrix}
V_{x,-q^{-2k}}^{\tau,\si;q^{2\tau}} \\ V_{x,q^{2\tau-2k}}^{\tau,\si;q^{2\tau}}.
\end{pmatrix}.
\end{split}
\]
For the second identity we used \eqref{eq:tildeV<->V} to write $\widetilde V_{x,v}^{\tau,\si}$ in terms of $V_{x,v}^{\tau,\si}$.
Then we transform $V_{x,v}^{\tau,\si}$ to $U_{x,z}^{\tau,\si}$ using the inverse of \eqref{eq:U=sum SV},
\[
\begin{pmatrix}
V^{\tau,\si;\nu}_{x,-q^{-2k}} \\ V^{\tau,\si;\nu}_{x,q^{2\si-2k} }
\end{pmatrix}
= \sum_{z \in -q^{2\Z} \cup q^{2\tau+2\Z} }
\begin{pmatrix}
\big(S_{x;-q^{-2k},z}^{\tau,\si}\big)_{-1,\nu} & \big(S_{x;-q^{-2k},z}^{\tau,\si}\big)_{q^{2\tau},\nu} \\
\big(S_{x;q^{2\si-2k},z}^{\tau,\si}\big)_{-1,\nu} & \big(S_{x;q^{2\si-2k},z}^{\tau,\si}\big)_{q^{2\tau},\nu}
\end{pmatrix}
\begin{pmatrix}
U^{\tau,\si;-1}_{x,z} \\ U^{\tau,\si;q^{2\tau}}_{x,z}
\end{pmatrix}.
\]
Combining then gives the following result:
\[
T^{\tau,\si}_{x;y,z} = \sum_{v \in -q^{2\Z}\cup q^{2\tau+2\Z}} \widetilde S_{x;v,y}^{\tau,\si} \hat S_{x;v,z}^{\tau,\si},
\]
where $\widetilde S_{x;v,y}^{\tau,\si} = S_{x;v,y}^{\si,\tau}$, and
\[
\hat S_{x;v,z}^{\tau,\si} =
\begin{pmatrix}
\big(S_{x;-q^{-2k},z}^{\tau,\si}\big)_{-1,\nu} & \big(S_{x;-q^{-2k},z}^{\tau,\si}\big)_{q^{2\tau},\nu} \\
\big(S_{x;q^{2\si-2k},z}^{\tau,\si}\big)_{-1,\nu} & \big(S_{x;q^{2\si-2k},z}^{\tau,\si}\big)_{q^{2\tau},\nu}
\end{pmatrix},
\qquad v = \nu q^{2k}.
\]
The coupling coefficients $T^{\tau,\si}_{x;y,z}$ are self-dual, i.e.~$(T^{\tau,\si}_{x;y,z})^* = T^{\si,\tau}_{x;z,y}$, and they satisfy the orthogonality relations
\[
\sum_{z \in -q^{2\Z} \cup q^{2\tau+2\Z} } T^{\tau,\si}_{x;y_1,z} \left(T^{\tau,\si}_{x;y_2,z}\right)^* = \de_{y_1y_2} I.
\]

\section{Coupling coefficients for three-fold tensor products} \label{sec:threefold}
In this section we consider a three-fold tensor product representation of $\mathcal A_q$ and we compute the coupling coefficients between two different generalized eigenvectors of $\rho_{\tau,\si}$. The coupling coefficients can be considered as $6j$-symbols or Racah coefficients. We start by introducing a class of functions that we need later on.

\subsection{$q$-Meixner functions}
The $q$-Meixner function is introduced in \cite{GroenK}, and is defined by
\begin{equation} \label{def:qMeixner}
\phi_\ga(x)=\phi_\ga(x;a,b;q) = \rphis{2}{2}{-1/x,-1/\ga}{a,b}{q,ab\ga x}.
\end{equation}
It is an entire function in $x$ and in $\ga$, and it is self-dual, i.e., $\phi_\ga(x) = \phi_x(\ga)$. Note that, for $n \in \N$, $\phi_{-q^n}(x)$ is a polynomial in $x$ of degree $n$; the $q$-Meixner polynomial. For parameters $a,b,t$ satisfying $a=\overline{b}$, $a \in \C\setminus\R$, $t>0$, the $q$-Meixner functions satisfy the orthogonality relations
\begin{equation} \label{eq:orthogonality qMeixner}
\int_{-1}^{\infty(t)} \phi_{\ga_1}(x)\phi_{\ga_2}(x) w(x)\, d_q x = \de_{\ga_1,\ga_2} (1-q) \frac{K_t \, K_{q/abt}}{|\ga_1| w(\ga_1)}, \qquad \ga_1,\ga_2 \in -q^\N \cup q^\Z/abt,
\end{equation}
where
\begin{gather*}
w(x) = w(x;a,b;q) = \frac{ (-qx;q)_\infty }{(-ax,-bx;q)_\infty}, \\
K_t = K_t(a,b;q) = \frac{ (q;q)_\infty \te(-t;q)}{(a,b,-at,-bt;q)_\infty}.
\end{gather*}
Furthermore, $\{\phi_\ga\mid \ga \in -q^\N \cup q^\Z/abt\}$ is an orthogonal basis for the corresponding $L^2$-space. Note that the dual orthogonality relations are equivalent to \eqref{eq:orthogonality qMeixner}.

The following $q$-integral involving $q$-Meixner functions is useful; the proof is given in Appendix \ref{app:integral qMeixner}.
\begin{prop} \label{prop:integral qMeixner}
For $m \in \N$,
\[
\begin{split}
\int_{-1}^{t} &\phi_\ga(xq^m;aq^{-m}, bq^{-m};q) (xq/t;q)_\infty w(x;a,b;q)\, d_q x   = \\ &(1-q) K_{tq^m}(aq^{-m},bq^{-m};q)\, (abt\ga q^{-m};q)_\infty (-qt;q)_m q^{-m} \rphis{3}{2}{ q^{-m},-bt,-at}{abt\ga q^{-m}, -qt}{q,q}.
\end{split}
\]
\end{prop}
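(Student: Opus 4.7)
The plan is to expand the $q$-Meixner function as its defining $_2\varphi_2$ series, interchange summation and $q$-integration, evaluate each term via an Andrews--Askey $q$-beta integral, and recognise the resulting single sum as the displayed terminating $_3\varphi_2$.

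First, I would substitute the series definition of $\phi_\ga(xq^m;aq^{-m},bq^{-m};q)$ and apply the identity $(a;q)_n=(-a)^n q^{\frac12 n(n-1)}(q^{1-n}/a;q)_n$ to $a=-q^{-m}/x$. This converts $(-q^{-m}/x;q)_n$ into $(q^{-m}/x)^n q^{\frac12 n(n-1)}(-xq^{m-n+1};q)_n$, so the only $x$-dependence in each term becomes the finite product $(-xq^{m-n+1};q)_n$. Writing $(-xq^{m-n+1};q)_n=(-xq^{m-n+1};q)_\infty/(-xq^{m+1};q)_\infty$, this combines with the weight as
$$(-xq^{m-n+1};q)_n\,(xq/t;q)_\infty\,w(x;a,b;q)=\frac{(xq/t,-qx,-xq^{m-n+1};q)_\infty}{(-ax,-bx,-xq^{m+1};q)_\infty}.$$

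Second, I would interchange sum and $q$-integration (absolute convergence on the countable support $-q^\N\cup tq^\N$ is routine, thanks to the $q^{n^2}$-decay produced by the two denominator parameters of the $_2\varphi_2$). Each term is then an Andrews--Askey type $q$-beta integral. The base case $n=0$,
$$\int_{-1}^{t}(xq/t;q)_\infty\,w(x;a,b;q)\,d_qx=(1-q)\,t\,K_t(a,b;q),$$
I would derive by splitting $\int_{-1}^{t}=\int_{0}^{t}-\int_{0}^{-1}$ and summing each geometric half by the $q$-Gauss $_2\varphi_1$ summation; the theta factor inside $K_t$ then emerges from the two $q$-Gauss evaluations via a Jacobi-triple-product style collapse. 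For $n\ge1$, the extra ratio $(-xq^{m-n+1};q)_\infty/(-xq^{m+1};q)_\infty$ in the integrand merely shifts one parameter, producing the same type of closed form multiplied by one extra $q$-shifted factorial.

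Third, substituting these closed forms into the $n$-sum and regrouping the $\theta$-factors via \eqref{eq:te-product} into the combination $K_{tq^m}(aq^{-m},bq^{-m};q)\,(abt\ga q^{-m};q)_\infty\,(-qt;q)_m\,q^{-m}$, the surviving series is visibly a $_3\varphi_2$. The form that arises naturally has top row of the shape $(q^{-m},-1/\ga,\cdot)$, and a Sears transformation \cite[(III.11)]{GR} then symmetrises it into the form with top row $(q^{-m},-at,-bt)$ required by the statement, confirming termination at level $m$ for the correct reason.

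The main obstacle is purely bookkeeping: the numerous $\theta$- and $q$-shifted-factorial prefactors produced by the individual $q$-beta evaluations must be regrouped into the single compact expression $K_{tq^m}(aq^{-m},bq^{-m};q)(abt\ga q^{-m};q)_\infty(-qt;q)_m q^{-m}$, which requires several careful applications of \eqref{eq:te-product}, and identifying the precise Sears transformation that lands on the symmetric top row $(q^{-m},-at,-bt)$ is the most delicate point. Beyond this, no conceptual surprises are expected.
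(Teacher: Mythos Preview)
There is a genuine gap in your second step. After expanding the defining $_2\varphi_2$ and rewriting $(-q^{-m}/x;q)_n$, the $x$-dependence in the $n$-th term is the polynomial $(-xq^{m-n+1};q)_n$. You claim the resulting integral
\[
\int_{-1}^{t}\frac{(xq/t,-qx;q)_\infty}{(-ax,-bx;q)_\infty}\,(-xq^{m-n+1};q)_n\,d_qx
\]
is again an Andrews--Askey $q$-beta integral, up to a single extra $q$-shifted factorial. It is not. The factor $(-xq^{m-n+1};q)_n$ does not cancel against anything in the weight $(-ax,-bx;q)_\infty$ (the parameters $a,b$ are unrelated to $q^{m-n+1}$), so for $n\ge 1$ you are integrating a genuine polynomial of degree $n$ against the big $q$-Jacobi type weight. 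Already for $n=1$ this produces the first moment, which is a sum of two terms and does not factor; for general $n$ you get a $_3\varphi_2$-type expression, not a single product. Your ``shifts one parameter'' heuristic would require the extra factor to be of the form $(-bxq^j;q)_k$, matching one of the denominator parameters.

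The paper's proof avoids this obstruction by a preliminary transformation: it rewrites
\[
\phi_\ga(xq^m;aq^{-m},bq^{-m};q)=\frac{(-a\ga q^{-m};q)_\infty}{(aq^{-m};q)_\infty}\,\rphis{2}{1}{-1/\ga,\,-bx}{bq^{-m}}{q,-a\ga q^{-m}},
\]
so the $x$-dependence in the $n$-th term is $(-bx;q)_n$, which \emph{does} cancel $n$ factors of $(-bx;q)_\infty$ in the weight, leaving exactly an Andrews--Askey integral \cite[(2.10.20)]{GR} with $D=-bq^n$. The resulting single sum is a $_3\varphi_2$, and two transformations \cite[(III.9),(III.13)]{GR} (not Sears \cite[(III.11)]{GR}) bring it to the form with top row $q^{-m},-at,-bt$; analytic continuation in $\ga$ removes the convergence restriction $|a\ga q^{-m}|<1$. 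If you want to salvage your route, the missing ingredient is precisely this $_2\varphi_2\to{}_2\varphi_1$ step before interchanging.
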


\textbf{Limit cases.} The $q$-Meixner functions can be considered as generalizations of certain $q$-Bessel functions.
We set $t= c/a$, $a= q^{\al+1}$, $b=\be q^n$ with $\al>-1$, $c>0$ and $n \in \N$,  then the limit $n\to \infty$ of the $q$-Meixner function with $\ga \in q^\Z/abt$ gives
\begin{equation} \label{eq:bigqBessel}
\lim_{n \to \infty} \phi_{q^{k-n+1}/\be c}(x;q^{\al+1},\be q^n;q) = \rphis{1}{1}{-1/x}{q^{\al+1}}{q, \frac{x q^{k+\al+2}}{c}} = \mathcal J_{\al,k}^c(-x;q),
\end{equation}
where $\mathcal J_{\al,k}^c(-x;q)$ denotes a big $q$-Bessel function \cite[(6.1)]{CKK}. For $x \in -q^\N$ this is a $q$-Laguerre polynomial in $q^k$. Furthermore, for $\ga \in -q^\N$ we obtain
\[
\lim_{n \to \infty} \phi_{-q^k}(x;q^{\al+1},\be q^n;q) = 0.
\]
Taking the limit in the orthonality relations \eqref{eq:orthogonality qMeixner} for the $q$-Meixner functions we find formally
\begin{gather*}
\begin{split}
\int_{-1}^{\infty(cq^{-1-\al})} &  \mathcal J_{\al,k}^c(-x;q) \mathcal J_{\al,l}^c(-x;q) \frac{(-qx;q)_\infty}{(-xq^{\al+1};q)_\infty}\,d_qx = \\
& \de_{k,l} (1-q) \frac{ (q;q)^2_\infty \te(-q^{\al+2}/c;q) }{ (q^{\al+1};q)_\infty^2 \te(-q/c;q) } q^{-k(\al+1)} (-q^{k+1}/c;q)_\infty, \qquad k,l \in \Z,
\end{split} \\
\begin{split}
\sum_{k \in \Z} \mathcal J_{\al,k}^c(-x;q) &\mathcal J_{\al,k}^c(-y;q) \frac{q^{k(\al+1)}}{(-q^{k+1}/c;q)_\infty} =\\ & \de_{x,y} \frac{ (q;q)^2_\infty \te(-q^{\al+2}/c;q) }{ (q^{\al+1};q)_\infty^2 \te(-q/c;q) } \frac{(-xq^{\al+1};q)_\infty}{|x|(-qx;q)_\infty}, \qquad x,y \in -q^\N \cup cq^{\Z-1-\al},
\end{split}
\end{gather*}
where we used the $\te$-product identity \eqref{eq:te-product} for the right hand side of \eqref{eq:orthogonality qMeixner}. These orthogonality relations are proved by Ciccoli, Koelink and Koornwinder in \cite{CKK}, so there is no need to make the limit transitions rigorous. Note that the self-duality property of the $q$-Meixner functions is lost in the limit.\\

We can take one more limit. We set $c=q^{-m}$, $m \in \Z$ and let $x \in cq^{\Z-\al-1}$, then
\begin{equation} \label{eq:HahnExtonqBessel}
\lim_{m \to \infty} \mathcal J_{\al,k}^{q^{-m}}(-q^{n-m-\al-1};q) = \rphis{1}{1}{0}{q^{\al+1}}{q,q^{n+k+1}} = J_\al(q^{n+k};q),
\end{equation}
where $J_\al$ is the Hahn-Exton $q$-Bessel function, or Jackson's third $q$-Bessel function. For $x \in -q^\N$ we have
\[
\lim_{m \to \infty} \mathcal J_{\al,k}^{q^{-m}}(-q^{n};q)=0.
\]
The limit of the orthogonality relations becomes
\[
\sum_{k \in \Z} J_\al(q^{k+m};q^2)J_\al(q^{k+n};q^2) q^{\al(k+1)} = \de_{m,n} q^{-n(\al+1)}  \frac{ (q;q)^2_\infty}{ (q^{\al+1};q)_\infty^2  }, \qquad m,n \in \Z,
\]
which are the $q$-Hankel orthogonality relations proved by Koorwinder and Swarttouw in \cite{KooSw}. Note that these $q$-Bessel functions are again self-dual.

\begin{remark}
The Al-Salam--Carlitz II functions from \S\ref{ssec:ACSfunctions} and their orthogonality relations can also formally be obtained from the $q$-Meixner functions. We will not need this in this paper.
\end{remark}

\subsection{Coupling coefficients}
We consider the threefold tensor product representations of $\mathcal A_q$ on $\ell^2(\N)^{\tensor 3}$ given by
\[
\mathcal T^{(3)} = (\pi_0 \tensor \pi_0 \tensor \pi_0) (1\tensor \De)\De.
\]
We first determine eigenvectors of $\mathcal T^{(3)}(\rho_{\tau,\si})$ in the same way as in \S\ref{ssec:CGCtausi}. From \eqref{eq:De(rhotausi)} we find
\[
\begin{split}
(1\tensor \De)&\De(\rho_{\tau,\si})=\\
& \hf q^{-\tau-\si-1} \Big( q^{-1}\, \al_{\tau+1,\infty} \ga_{\tau,\infty} \tensor \De(\al_{\infty, \si+1} \be_{\infty,\si})
+ q\, \be_{\tau+1,\infty} \de_{\tau, \infty} \tensor \De(\ga_{\infty, \si+1} \de_{\infty,\si})\\
& + q^2(1+q^2)\, \rho_{\tau,\infty} \tensor \De(\rho_{\infty, \si}) + q^2(1-q^{2\si})\, \rho_{\tau,\infty} \tensor \De(1) + q^2(1-q^{2\tau})\, 1 \tensor \De(\rho_{\infty, \si}) \Big).
\end{split}
\]
We now let $\mathcal T^{(3)}(\rho_{\tau,\si})$ act on the $\ell^2(\N)^{\tensor 3}$-basis  $\{v_x^{\tau,\infty} \tensor V_{y,z}^{\infty,\si} \mid x \in -q^{2\N}\cup q^{2\tau+2\N}, y \in -q^{2\N}\cup q^{2\si+2\N}, z \in -q^{2\Z} \cup q^{2\si+2\Z}\}$, see Propositions \ref{prop:eigenvector rho tau infty}, \ref{prop:actions on v tau infty} and \eqref{eq:T action V infty si}. This gives
\[
\begin{split}
\mathcal T^{(3)}(\rho_{\tau,\si})\, &v_{x}^{\tau,\infty} \tensor V_{y,z}^{\infty,\si} = \\
&\hf \Big[xy q^{1-\tau -\si}(1+ q^2) + y q^{1-\tau -\si} (1- q^{2\si})  + y q^{1-\tau -\si}(1-q^{2\tau})\Big]\,  v_{x}^{\tau,\infty} \tensor V_{y,z}^{\infty,\si}\\
+& \hf \sqrt{ (1+ x ) (1- x q^{-2\tau} ) ( 1+ y) (1-y q^{-2\si} ) } \, v_{x/q^2}^{\tau,\infty} \tensor V_{y/q^2, z/q^2}^{\infty,\si} \\
+& \hf  \sqrt{ (1+ x q^{2} ) (1- x q^{2-2\tau} ) ( 1+ y q^{2} ) (1-y q^{2-2\si} ) } \, v_{x q^{2}}^{\tau,\infty} \tensor V_{y q^{2},zq^2 }^{\infty,\si},
\end{split}
\]
This can be matched to the three-term recurrence relation for certain continuous dual $q^2$-Hahn polynomials, and then similar as in \S\ref{ssec:CGCtausi} we find the generalized eigenvectors
\[
F_{u,y,z}^{\tau,\si;\la} =\sum_{n \in \N } c_{u,\la q^{2n},y}^{\tau,\si}\, v_{\la q^{2n}}^{\tau,\infty} \tensor V_{yq^{2n},zq^{2n}}^{\infty,\si}, \qquad \la = -1, q^{2\tau},
\]
for eigenvalue $\mu_u$.

Next we determine another generalized eigenvector of $\mathcal T^{(3)}(\rho_{\tau,\si})$ using the cocommutativity of $\De$, i.e.,~$(1\tensor \De)\De = (\De \tensor 1)\De$. From \eqref{eq:De(rhotausi)} we find
\[
\begin{split}
(\De\tensor 1)&\De(\rho_{\tau,\si})=\\
& \hf q^{-\tau-\si-1} \Big( q^{-1}\, \De(\al_{\tau+1,\infty} \ga_{\tau,\infty}) \tensor\al_{\infty, \si+1} \be_{\infty,\si}
+ q\, \De(\be_{\tau+1,\infty} \de_{\tau, \infty}) \tensor \ga_{\infty, \si+1} \de_{\infty,\si}\\
& + q^2(1+q^2)\, \De(\rho_{\tau,\infty}) \tensor \rho_{\infty, \si} + q^2(1-q^{2\si})\, \De(\rho_{\tau,\infty}) \tensor1 + q^2(1-q^{2\tau})\, \De(1) \tensor\rho_{\infty, \si} \Big).
\end{split}
\]
Now we let $\mathcal T^{(3)}(\rho_{\tau,\si})$ act on the $\ell^2(\N)^{\tensor 3}$-basis vectors $V^{\tau,\infty}_{x,y} \tensor v^{\infty,\si}_z$,
\[
\begin{split}
\mathcal T^{(3)}(\rho_{\tau,\si})\, &V_{x,y}^{\tau,\infty} \tensor v_{z}^{\infty,\si} = \\
&\hf \Big[xz q^{1-\tau -\si}(1+ q^2) + z q^{1-\tau -\si} (1- q^{2\si})  + z q^{1-\tau -\si}(1-q^{2\tau})\Big]\,  V_{x,y}^{\tau,\infty} \tensor v_{z}^{\infty,\si}\\
+& \hf  \sqrt{ (1+ x ) (1- x q^{-2\tau} ) ( 1+ z) (1-z q^{-2\si} ) } \, V_{x/q^2,yq^2}^{\tau,\infty} \tensor v_{z/q^2}^{\infty,\si} \\
+& \hf  \sqrt{ (1+ x q^{2} ) (1- x q^{2-2\tau} ) ( 1+ z q^{2} ) (1-z q^{2-2\si} ) } \, V_{x q^{2},y/q^2}^{\tau,\infty} \tensor v_{z q^{2} }^{\infty,\si},
\end{split}
\]
and then we obtain the generalized eigenvector
\[
G_{u,x,y}^{\tau,\si;\xi}=\sum_{k \in \N } c_{u,\xi q^{2k},x}^{\si,\tau}\, V_{x q^{2k},yq^{-2k}}^{\tau,\infty} \tensor v_{\xi q^{2k}}^{\infty,\si}, \qquad \xi = -1, q^{2\si},
\]
for eigenvalue $\mu_u$ (compare with Remark \ref{rem:si<->tau}).\\

We define $R_{u;x,y;z,v}^{\tau,\si}$ as the $2\times2$-matrix-valued coupling coefficients between the generalized eigenvectors;
\begin{equation} \label{eq:defR}
\begin{gathered}
\begin{pmatrix}
F_{u,x,y}^{\tau,\si;-1} \\ F_{u,x,y}^{\tau,\si;q^{2\tau}}
\end{pmatrix}
= \sum_{z,v} R_{u;x,y;z,v}^{\tau,\si}
\begin{pmatrix}
G_{u,z,v}^{\tau,\si;-1} \\ G_{u,z,v}^{\tau,\si;q^{2\si}}
\end{pmatrix},
\\
R_{u;x,y;z,v}^{\tau,\si} = \left(\big(R_{u;x,y;z,v}^{\tau,\si}\big)_{\la,\xi} \right)_{\la \in \{-1, q^{2\tau}\},\, \xi \in \{-1, q^{2\si}\} }.
\end{gathered}
\end{equation}
The coupling coefficients satisfy the orthogonality relations
\begin{equation} \label{eq:orthogonalityR}
\begin{split}
\sum_{x,y\in -q^{2\Z} \cup q^{2\si+2\Z}} (R_{u;x,y;z_1,v_1}^{\tau,\si})^* R_{u;x,y;z_1,v_2}^{\tau,\si} = \de_{z_1,z_2} \de_{v_1,v_2} I,\\
\sum_{z,v\in -q^{2\Z} \cup q^{2\tau+2\Z}} R_{u;x_1,y_1;z,v}^{\tau,\si} (R_{u;x_2,y_2;z,v}^{\tau,\si})^*= \de_{x_1,x_2}\de_{y_1,y_2} I,
\end{split}
\end{equation}
where $I$ is the $2\times2$ identity matrix, and $A^*$ denotes the complex transpose of $A$.
Furthermore, since $(x,y,z,v,\si,\tau) \to (z,v,x,y,\tau,\si)$ interchanges $F_{u,x,y}^{\tau,\si}$ with $G_{u,z,v}^{\tau,\si}$, we have the symmetry property
\begin{equation} \label{eq:symmetryR}
\big(R^{\tau,\si}_{u;x,y;z,v}\big)_{\la,\xi} = \big(R^{\si,\tau}_{u;z,v;x,y}\big)_{\xi,\la}.
\end{equation}
Because of this symmetry property the two orthogonality relations \eqref{eq:orthogonalityR} are equivalent.\\

We use \eqref{eq:defR} to determine an explicit expression for $\big(R^{\tau,\si;}_{u;x,y;z,v}\big)_{\la,\xi}$. First a preliminary result.
\begin{lemma}
The coupling coefficients satisfy
\begin{equation} \label{eq:R=deltadeltaR}
\big(R^{\tau,\si}_{u;x,y;z,v}\big)_{\la,\xi} = \de_{y,\xi q^{2r}}\, \de_{v, \la q^{2r}}\, R^{\tau,\si;\la,\xi}_{u,r;x,z}, \qquad r \in\Z,
\end{equation}
where $R^{\tau,\si;\la,\xi}_{u,r;x,z}$ is determined by
\begin{equation} \label{eq:sum Rcc=cc}
\sum_{z\in -q^{2\Z} \cup q^{2\tau + 2\Z}} R^{\tau,\si;\la,\xi}_{u,r;x,z}\, c_{u,\xi q^{2k},z}^{\si,\tau} c^{\tau,\infty}_{zq^{2k},\la q^{2n-2m},m} = c_{u,\la q^{2n},x}^{\tau,\si} c^{\infty,\si}_{xq^{2n},m,\xi q^{2k-2m}}, \qquad r = n-m+k,
\end{equation}
for $k,m,n \in \N$.
\end{lemma}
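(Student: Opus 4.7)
\emph{Proof Proposal.} The plan is to derive the defining equation \eqref{eq:sum Rcc=cc} by projecting both sides of the decomposition \eqref{eq:defR} onto the orthonormal basis $\{v^{\tau,\infty}_{\la q^{2n}} \tensor e_m \tensor v^{\infty,\si}_{\xi q^{2k}}\}$ of $\ell^2(\N)^{\tensor 3}$ (indexed by $\la \in \{-1,q^{2\tau}\}$, $\xi \in \{-1,q^{2\si}\}$, $n,m,k \in \N$) and matching coefficients; the support statement \eqref{eq:R=deltadeltaR} will then emerge from the single-term nature of the matching on the $G$-side.

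Concretely, substituting $V^{\infty,\si}_{xq^{2n},yq^{2n}} = \sum_m c^{\infty,\si}_{xq^{2n},m,yq^{2n}}\, e_m \tensor v^{\infty,\si}_{yq^{2(n+m)}}$ into the definition of $F^{\tau,\si;\la}_{u,x,y}$ produces the coefficient $c^{\tau,\si}_{u,\la q^{2n},x}\, c^{\infty,\si}_{xq^{2n},m,yq^{2n}}$ of the basis vector $v^{\tau,\infty}_{\la q^{2n}} \tensor e_m \tensor v^{\infty,\si}_{\xi q^{2k}}$, subject to the Kronecker constraint $y q^{2n} = \xi q^{2(k-m)}$. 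Analogously, substituting $V^{\tau,\infty}_{zq^{2k},vq^{-2k}} = \sum_j c^{\tau,\infty}_{zq^{2k},vq^{-2k},j}\, v^{\tau,\infty}_{vq^{2(j-k)}} \tensor e_j$ into the definition of $G^{\tau,\si;\xi}_{u,z,v}$ gives the coefficient $c^{\si,\tau}_{u,\xi q^{2k},z}\, c^{\tau,\infty}_{zq^{2k},vq^{-2k},m}$ of the same basis vector, under the constraint $\la q^{2n} = vq^{2(m-k)}$; this pins $v = \la q^{2(n+k-m)}$, so that for each basis vector exactly one value of $v$ contributes to the sum in \eqref{eq:defR}. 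Writing $r := n-m+k$ rewrites this as $v = \la q^{2r}$, and pairing with the $F$-side selection $yq^{2n} = \xi q^{2(k-m)}$ parametrizes the ``$r$-diagonal'' support claimed in \eqref{eq:R=deltadeltaR}.

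Inserting $vq^{-2k} = \la q^{2(n-m)}$ and $yq^{2n} = \xi q^{2(k-m)}$ into the arguments of the Clebsch--Gordan coefficients then produces precisely the identity \eqref{eq:sum Rcc=cc}, with the reduced coefficient $R^{\tau,\si;\la,\xi}_{u,r;x,z}$ identified as the value of $(R^{\tau,\si}_{u;x,y;z,v})_{\la,\xi}$ at this single contributing pair $(y,v)$. The main (purely bookkeeping) obstacle is tracking the various $q$-shifts so that the lemma's arguments appear in the stated form; consistency of \eqref{eq:sum Rcc=cc} across different triples $(n,m,k)$ that yield the same $r$ is automatic, being a reflection of the existence of the decomposition \eqref{eq:defR} as a formal identity between generalized eigenvectors of $\mathcal T^{(3)}(\rho_{\tau,\si})$.
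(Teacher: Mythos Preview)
Your proposal is correct and follows exactly the paper's approach: compute the inner products of $F^{\tau,\si;\la}_{u,x,y}$ and $G^{\tau,\si;\xi}_{u,z,v}$ with the basis vectors $v^{\tau,\infty}_{\la q^{2n}}\otimes e_m\otimes v^{\infty,\si}_{\xi q^{2k}}$ and match both sides of \eqref{eq:defR}. Your write-up is actually more explicit than the paper's one-line ``taking the inner product \ldots\ gives the result''; the only remaining work, as you note, is the $q$-shift bookkeeping to put the arguments in the stated form.
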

\begin{proof}
We have
\[
\langle F_{u,x,y}^{\tau,\si;\nu}, v_{\la q^{2n}}^{\tau,\infty}\tensor e_m \tensor v_{\xi q^{2k}}^{\infty, \si} \rangle = \de_{yq^{2n+2m},\xi q^{2k}} \de_{\nu,\la}\, c_{u,\la q^{2n},x}^{\tau,\si} c_{xq^{2n},m, yq^{2n}}^{\infty,\si},
\]
and
\[
\langle G_{u,z,v}^{\tau,\si;\nu}, v_{\la q^{2n}}^{\tau,\infty}\tensor e_m \tensor v_{\xi q^{2k}}^{\infty, \si} \rangle = \de_{\la q^{2n}, vq^{2m-2k}} \de_{\nu,\xi}\, c_{u,\xi q^{2k},z}^{\si,\tau} c_{z q^{2k}, vq^{-2k}, m}^{\tau,\infty}.
\]
Now taking the inner product of \eqref{eq:defR} with basis vectors $v_{\la q^{2n}}^{\tau,\infty}\tensor e_m \tensor v_{\xi q^{2k}}^{\infty, \si}$ gives the result.
\end{proof}

Note that by \eqref{eq:R=deltadeltaR}, for fixed $y,v,\xi,\la$, only one of the four matrix coefficients of $R^{\tau,\si}_{u;x,y;z,v}$ is nonzero.
We are now ready to show that the coefficients $R^{\tau,\si;\la,\xi}_{u,r;x,z}$ are orthonormal Meixner functions.
\begin{thm}
Let $u \in \T$, $\la \in \{-1,q^{2\tau}\}$, $\xi \in \{-1,q^{2\si}\}$, $r \in \Z$, $x \in -q^{2\Z}\cup q^{2\si+2\Z}$ and $z \in -q^{2\Z} \cup q^{2\tau+2\Z}$ and define $x'= \xi x q^{2r-2\si}$ and $z'= \la z q^{2r-2\tau}$. Then for $x' \in -q^{2\N}\cup q^{2\si+2\Z}$ and $z' \in -q^{2\N}\cup q^{2\tau+2\Z}$ we have
\[
R^{\tau,\si;\la,\xi}_{u,r;x,z} = \sgn(\la \xi)^{r}
 \sqrt{\frac{|x' z'|w(z';a,b;q^2) w(x' ;a,b;q^2)}{ K_{t}(a,b;q^2) K_{q^2/abt}(a,b;q^2)}}\, \phi_{x'}(z';a,b;q^2),
\]
with
\[
(a,b,t) =
(-q^{1+\si+\tau-2r}u/\la\xi, -q^{1+\si+\tau-2r}/u \la\xi, \la^2 q^{2r-2\tau}).
\]
For $x' \in -q^{-2-2\N}$ or $z' \in -q^{-2-2\N}$, we have $R^{\tau,\si;\la,\xi}_{u,r;x,z}=0$.
\end{thm}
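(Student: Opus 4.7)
The plan is to verify the proposed formula by directly checking the defining identity \eqref{eq:sum Rcc=cc} for one convenient choice of the free parameters $(k,m,n)\in\N^3$ with $n-m+k=r$. Take $n=r$, $m=k=0$ when $r\ge 0$; the case $r<0$ follows symmetrically with $n=k=0$, $m=-r$.

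With $m=k=0$ the big $q^2$-Laguerre factor $c^{\tau,\infty}_{z,\la q^{2r},0}=\bar L_0(\cdots)$ and the continuous dual $q^2$-Hahn factor $c^{\si,\tau}_{u,\xi,z}=\bar p_0(\mu_u;\cdots)$ are both degree-zero square-root normalisations. After substituting $z'=\la z q^{2r-2\tau}$ and $t=\la^2 q^{2r-2\tau}$, and using the $\te$-product identity \eqref{eq:te-product} together with \eqref{eq:handy identities}, the $z$-dependent piece of the product of these two weights becomes, up to a $z$-independent constant, exactly the weight $w(z';a,b;q^2)(z'q^2/t;q^2)_\infty$ appearing in Proposition \ref{prop:integral qMeixner}, with $(a,b,t)$ as in the theorem. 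The sum over $z\in -q^{2\Z}\cup q^{2\tau+2\Z}$ then becomes the Jackson $q^2$-integral $\int_{-1}^{t}\,d_{q^2}z'$; the $z'$-values lying outside $[-1,t]$ are precisely those with $z'\in -q^{-2-2\N}$, and these are killed by the vanishing claim, since the $q$-Meixner weight acquires the zero factor $(-q^2z';q^2)_\infty=0$ there.

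Inserting the conjectured $q$-Meixner form of $R$ and applying Proposition \ref{prop:integral qMeixner} (with the Proposition's shift parameter set to $r$), the left-hand side of \eqref{eq:sum Rcc=cc} evaluates to an explicit ${}_3\varphi_2$ series with known prefactor. The right-hand side of \eqref{eq:sum Rcc=cc} for our choice is $c^{\tau,\si}_{u,\la q^{2r},x}\cdot c^{\infty,\si}_{xq^{2r},0,\xi}$, whose continuous dual $q^2$-Hahn factor is by \eqref{eq:ContinuousDualqHahn} also a ${}_3\varphi_2$. A standard ${}_3\varphi_2$-transformation from \cite{GR}, together with \eqref{eq:te-product} and \eqref{eq:handy identities}, matches the two sides and produces the prefactor $\sgn(\la\xi)^r$ from the $(-1)^r$'s arising in the Pochhammer manipulations.

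Uniqueness of $R$ is encoded in the orthogonality relations \eqref{eq:orthogonalityR}, which reduce to the $q$-Meixner orthogonality \eqref{eq:orthogonality qMeixner} under the stated parameter identification. The vanishing claim for $x'\in -q^{-2-2\N}$ follows from the factor $\sqrt{|x'|\, w(x';a,b;q^2)}$ in the proposed $R$, which vanishes because $w(x';a,b;q^2)=0$ for the same reason as in the $z'$-case. The main technical obstacle is the bookkeeping of the multiple square-root normalisations into a single clean equality, together with the identification of the correct ${}_3\varphi_2$-transformation and the careful tracking of signs required to produce the $\sgn(\la\xi)^r$ prefactor and to harmonise the two cases $\xi=-1$ and $\xi=q^{2\si}$ (which correspond to swapping the roles of the Al-Salam--Chihara-like parameters in $c^{\tau,\si}_{u,\la q^{2r},x}$).
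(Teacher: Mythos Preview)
Your proposal is correct and takes essentially the same approach as the paper: specialize \eqref{eq:sum Rcc=cc} to $k=m=0$ (so $r=n$), rewrite the $z$-sum as the $q^2$-integral of Proposition~\ref{prop:integral qMeixner}, match the resulting ${}_3\varphi_2$ to the continuous dual $q^2$-Hahn factor on the right via \cite[(III.11)]{GR} (needed for the case $\xi=q^{2\si}$), and obtain the orthogonality relations from \eqref{eq:orthogonality qMeixner}. The paper additionally invokes the substitution rule $\int_{-1}^{q^{2\tau}} f(-zq^{-2\tau})\,d_{q^2}z = q^{2\tau}\int_{-1}^{q^{-2\tau}} f(z)\,d_{q^2}z$ to handle $\la=-1$, and does not treat $r<0$ separately.
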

\begin{proof}
We need to verify that \eqref{eq:sum Rcc=cc} is satisfied, and we need to verify the orthogonality relations \eqref{eq:orthogonalityR} for $R^{\tau,\si}_{u;x,y;z,v}$. The orthogonality relations follow directly from the orthogonality relations \eqref{eq:orthogonality qMeixner} for the $q$-Meixner functions.

Since the coupling coefficients are independent of $k$ and $m$, it is enough to show that \eqref{eq:sum Rcc=cc} holds for $k=m=0$ (so $r=n$), i.e.
\begin{equation} \label{eq:hulpeqR}
\begin{split}
&\left( \frac{q^{-2\tau}(q^2,-\xi^2 q^{2-2\si},-\la q^{2+2n},\la q^{2+2n-2\tau};q^2)_\infty } { (-\xi q^{1-\si+\tau} u^{\pm 1}, \xi q^{1-\tau-\si} u^{\pm 1};q^2)_\infty  \te(-q^{-2\tau};q^2)} \right)^{\frac12} \\
& \times \sum_{z\in -q^{2\Z}\cup q^{2\tau + 2\Z}} R^{\tau,\si;\la,\xi}_{u,n;x,z}
\frac{|z|^{\frac12}(zq^{2-2\tau}, -zq^2;q^2)_\infty}{(zq^{1+\si-\tau}u^{\pm 1}/\xi,-\la z q^{2+2n-2\tau};q^2)_\infty^{1/2}}\\
& \quad = \frac{|x|^{\frac12}(xq^{2+2n-2\si}, -xq^2;q^2)_\infty}{( xq^{1+\tau-\si}u^{\pm 1}/\la,-\xi x q^{2+2n-2\si};q^2)_\infty^{1/2}} (-\la q^{\si-\tau})^{-n}\\
& \qquad \times \left( \frac{q^{-2\si}(q^{2n+2},-\la^2 q^{2+2n-2\tau},-\xi q^2,\xi q^{2-2\si};q^2)_\infty } { (-\la q^{1-\tau+\si} u^{\pm 1}, \la q^{1-\tau-\si} u^{\pm 1};q^2)_\infty \te(-q^{-2\si};q^2)} \right)^{\frac12}\\
& \qquad \times (-\la^2 q^{2-2\tau};q^2)_n  \rphis{3}{2}{q^{-2n}, - \la q^{1-\tau+\si} u , -\la q^{1-\tau+\si}/u}{-\la^2 q^{2-2\tau}, - x q^2}{q^2,q^2}.
\end{split}
\end{equation}
We will use Proposition \ref{prop:integral qMeixner}. First note that the sum over $z$ in \eqref{eq:hulpeqR} can be written as a $q$-integral of the form $\int_{-1}^{q^{2\tau}} f(z) \, d_{q^2}z$. The summand can be expressed in terms of the $q$-Meixner weight function $w$ \eqref{eq:orthogonality qMeixner} using
\[
\frac{(zq^{2-2\tau}, -zq^2;q^2)_\infty}{(zq^{1+\si-\tau}u^{\pm 1}/\xi,-\la z q^{2+2n-2\tau};q^2)_\infty^{1/2}} = \frac{w(\la z q^{-2\tau};\tilde a,\tilde b;q^2)}{w(\la z q^{2n-2\tau};\tilde a q^{-2n},\tilde b q^{-2n};q^2)^{1/2}} (zq^2/\la;q^2)_\infty,
\]
where $(\tilde a,\tilde b) =(-q^{1+\si+\tau}u/\la\xi, -q^{1+\si+\tau}/u \la\xi )$. Note that the right hand side of \eqref{eq:hulpeqR} contains a similar expression. Now we can verify, using identities like \eqref{eq:handy identities} that for $\xi=-1$ identity \eqref{eq:hulpeqR} can be written as
\[
\begin{split}
\frac{1}{1-q}\int_{-1}^{q^{2\tau}} &\frac{R^{\tau,\si;\la,\xi}_{u,r;x,z}\,(zq^2/\la;q^2)_\infty }{w(\la z q^{2n-2\tau};\tilde a q^{-2n},\tilde b q^{-2n};q^2)^{1/2}}\,w(\la z q^{-2\tau};\tilde a,\tilde b;q^2)|z|^{-\frac12}\, d_{q^2}z = \\& \sgn(\la \xi)^{n}|\xi x/\la|^\frac12 q^{\tau - \si} \left(\frac{w(\xi x q^{2n-2\si};\tilde a q^{-2n},\tilde b q^{-2n};q^2) K_{\tilde tq^{2n}}(\tilde a q^{-2n},\tilde b q^{-2n};q^2)}{ K_{q^{2+2n}/\tilde a\tilde b \tilde t} (\tilde a q^{-2n},\tilde b q^{-2n};q^2) } \right)^\frac12 \\
&\times  (xq^{2}/\xi;q^2)_\infty  (-q^2\tilde t;q^2)_n \rphis{3}{2}{q^{-2n}, - \tilde a \tilde t, -\tilde b \tilde t}{-q^2 \tilde t, x q^2/\xi}{q^2,q^2},
\end{split}
\]
with $\tilde t = \la^2 q^{-2\tau}$. For $\xi = q^{2\si}$ this is also true, but we need to write the $_3\varphi_2$-function in \eqref{eq:hulpeqR} as
\begin{multline*}
\qquad \rphis{3}{2}{q^{-2n}, -\la q^{1-\tau+\si} u , -\la q^{1-\tau+\si}/u}{-\la^2 q^{2-2\tau}, - x q^2}{q^2,q^2}
=\\
(-1)^n q^{2n\si} \frac{ (x q^{2-2\si};q^2)_n}{ (-xq^2;q^2)_n}\rphis{3}{2}{q^{-2n},  \la q^{1-\tau-\si} u , \la q^{1-\tau-\si}/u}{-\la^2 q^{2-2\tau}, x q^{2-2\si}}{q^2,q^2}, \qquad
\end{multline*}
which follows from the $_3\varphi_2$-transformation \cite[(III.11)]{GR}. For $\la = q^{2\tau}$ it now is a straightforward calculation using Proposition \ref{prop:integral qMeixner} to show that \eqref{eq:hulpeqR}, hence also \eqref{eq:sum Rcc=cc}, is satisfied with the expression for $R^{\tau,\si;\la,\xi}_{u,r;x,z}$ given in the theorem. For $\la = -1$ the calculation is the same after application of the substitution rule for $q$-integrals,
\[
\int_{-1}^{q^{2\tau}} f(-zq^{-2\tau})\,d_{q^2}z = q^{2\tau} \int_{-1}^{q^{-2\tau}} f(z)\, d_{q^2} z. \qedhere
\]
\end{proof}

\begin{remark}
Note that the symmetry property \eqref{eq:symmetryR} for the coupling coefficients corresponds to the self-duality property of the $q$-Meixner functions.
\end{remark}

Formula \eqref{eq:sum Rcc=cc} now gives a $q$-integral identity involving continuous dual $q$-Hahn polynomials $p_n$ \eqref{eq:ContinuousDualqHahn}, big $q$-Laguerre polynomials $L_n$ \eqref{eq:bigqLaguerre} and $q$-Meixner functions $\phi_\ga$ \eqref{def:qMeixner}.
\begin{thm} \label{thm:sum 2phi2 3phi2 3phi2}
Let $u \in \T$, $k,m,n \in \N$, $s,t>0$, $x \in -q^{\N-n} \cup sq^{\N-n}$, $\xi \in \{-1,s\}$, $\la \in \{-1,t\}$, and set $r=k-m+n$, then
\[
\begin{split}
\int_{-q^{-k}}^{tq^{-k}}&  p_k(\mu_u;-\xi \sqrt{qt/s}, \xi \sqrt{q/st}, \tfrac{z}{\xi} \sqrt{qs/t};q) \,
L_m(-\la z q^{1+r}/t;-\la q^{n-m}, \la q^{n-m}/t;q) \\
&\times \phi_{\xi x q^r/s} (\la z q^r/t;-\tfrac{uq^{-r}}{\la\xi} \sqrt{qst}, -\tfrac{q^{-r}}{u\la\xi} \sqrt{qst};q)
\frac{(-zq^{1+k}, zq^{1+k}/t;q)_\infty}{(\tfrac{z u^{\pm 1}}{\xi} \sqrt{qs/t};q)_\infty}\,d_q z \\
=\, &\, (1-q)|\la| q^{(m+\frac12)(n-k)} q^{-\frac12 r(r+1)} (\la \xi)^{-r}(\la/\xi)^m (st)^{r/2} (s/t)^{m/2} \\
&  \times \frac{ (q,xq^{1+n}/s, -xq^{1+n};q)_\infty \te(-\la^2 q/t;q) }{ (-\xi x q^{1+r}/s,\tfrac{\la u^{\pm 1} }{\xi} \sqrt{qs/t}, -\tfrac{u^{\pm 1} q^{-r}}{\la \xi} \sqrt{qst};q)_\infty }  \frac{ (-\xi q^{k-m+1}, \xi q^{k-m+1}/s;q)_m }{ (-\la q^{n-m+1}, \la q^{n-m+1}/t;q)_m}\\
&  \times p_n(\mu_u;-\la \sqrt{qs/t}, \la \sqrt{q/st}, \tfrac{x}{\la} \sqrt{qt/s};q)\, L_m(-\xi x q^{1+r}/s;-\xi q^{k-m}, \xi q^{k-m}/s;q).
\end{split}
\]
\end{thm}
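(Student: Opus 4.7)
The strategy is to unfold identity \eqref{eq:sum Rcc=cc} from the proof of the preceding theorem by substituting the explicit definitions of its five factors. On the left-hand side, $c^{\si,\tau}_{u,\xi q^{2k},z}$ unfolds to the normalized continuous dual $q^2$-Hahn polynomial $\bar p_k$ in $\mu_u$ with parameters $-\xi q^{1-\si+\tau},\xi q^{1-\si-\tau},z q^{1+\si-\tau}/\xi$; $c^{\tau,\infty}_{zq^{2k},\la q^{2n-2m},m}$ equals $(\sgn(\la)i)^m$ times a normalized big $q^2$-Laguerre polynomial $\bar L_m$ with argument $-\la z q^{2(1+r-\tau)}$, where $r=n-m+k$; and $R^{\tau,\si;\la,\xi}_{u,r;x,z}$ is, by the theorem just proved, $\sgn(\la\xi)^r$ times the normalized $q$-Meixner function $\phi_{\xi x q^{2r-2\si}}(\la z q^{2r-2\tau};a,b;q^2)$ with $(a,b)=(-q^{1+\si+\tau-2r}u^{\pm 1}/\la\xi)$ and weight parameter $\la^2 q^{2r-2\tau}$. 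On the right-hand side, $c^{\tau,\si}_{u,\la q^{2n},x}$ is a normalized $\bar p_n$ and $c^{\infty,\si}_{xq^{2n},m,\xi q^{2k-2m}}$ is $(\sgn(\xi)i)^m$ times a normalized $\bar L_m$, with the parameters appearing in the statement of the theorem.

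Next I would convert the sum over $z\in -q^{2\Z}\cup q^{2\tau+2\Z}$ into a $q^2$-integral $\int_{-1}^{q^{2\tau}}(\cdots)\,d_{q^2}z$; this is legitimate because the factor $(zq^{2-2\tau},-zq^2;q^2)_\infty$ hidden in the continuous dual $q^2$-Hahn weight kills the summand off the geometric lattice $-q^{2\N}\cup q^{2\tau+2\N}$. The change of variable $z \mapsto zq^{-2k}$ together with the standard rescaling $(q^{2\si},q^{2\tau})\mapsto(s,t)$ and $q^2 \mapsto q$ converts this into the $q$-integral $\int_{-q^{-k}}^{tq^{-k}}$ of the statement, and the square roots of the various weights combine into the $z$-dependent factor $(-zq^{1+k}, zq^{1+k}/t;q)_\infty/(zu^{\pm1}\sqrt{qs/t}/\xi;q)_\infty$ in front of the three unnormalized special functions $p_k$, $L_m$, $\phi_\ga$.

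The remaining work is pure bookkeeping of the $z$-independent prefactor. It is a product of the normalization constants of $\bar p_n$, $\bar p_k$ and the two copies of $\bar L_m$, together with the square root $K_t(a,b;q^2)^{1/2} K_{q^2/abt}(a,b;q^2)^{1/2}$ from the normalization of the $q$-Meixner function in $R^{\tau,\si;\la,\xi}_{u,r;x,z}$, and the combined signs. Systematic application of the $\theta$-product identity \eqref{eq:te-product} and the trivial simplifications \eqref{eq:handy identities} collapses the resulting product of infinite $q^2$-Pochhammer symbols and $\theta$-functions into the closed form displayed in the theorem; the signs combine, after noting $m+r=n+k$, to absorb into the real prefactor $|\la|q^{(m+\frac12)(n-k)}(\la\xi)^{-r}(\la/\xi)^m(st)^{r/2}(s/t)^{m/2}$. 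The main obstacle is precisely this long algebraic reduction, analogous to but more involved than the one carried out in the proof of Theorem \ref{thm:identityCC}; no new $q$-series transformation beyond those already used there is needed.
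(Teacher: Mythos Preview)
Your proposal is correct and follows essentially the same route as the paper: both derive the identity by writing out \eqref{eq:sum Rcc=cc} with the explicit forms of $c^{\si,\tau}$, $c^{\tau,\infty}$, $R^{\tau,\si;\la,\xi}$ on the left and $c^{\tau,\si}$, $c^{\infty,\si}$ on the right, then perform the substitution $(q^{2\si},q^{2\tau},q^2)\mapsto(s,t,q)$ and carry out the bookkeeping of prefactors using \eqref{eq:te-product} and \eqref{eq:handy identities}. The only item the paper singles out that you do not mention explicitly is the cancellation
\[
\frac{ (\tfrac{\xi u^{\pm 1} }{\la} \sqrt{ qt/s}, -\la u^{\pm 1} \sqrt{qs/t}, \la u^{\pm 1} \sqrt{q/st};q)_\infty }{ (-\xi u^{\pm 1} \sqrt{qt/s}, \xi u^{\pm 1} \sqrt{q/st};q)_\infty } = (\tfrac{\la u^{\pm 1}}{\xi} \sqrt{qs/t};q)_\infty,
\]
but this is indeed just a case-by-case verification for $\la\in\{-1,t\}$, $\xi\in\{-1,s\}$, so it falls under what you call ``trivial simplifications''.
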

\begin{proof}
This follows from identity \eqref{eq:sum Rcc=cc}, with $(q^{2\si},q^{2\tau}) \mapsto (s,t)$ and $q^2 \mapsto q$, by writing it in terms of the orthogonal polynomials/functions, and from careful bookkeeping. We also use the identity
\[
\frac{ (\tfrac{\xi u^{\pm 1} }{\la} \sqrt{ qt/s}, -\la u^{\pm 1} \sqrt{qs/t}, \la u^{\pm 1} \sqrt{q/st};q)_\infty }{ (-\xi u^{\pm 1} \sqrt{qt/s}, \xi u^{\pm 1} \sqrt{q/st};q)_\infty } = (\tfrac{\la u^{\pm 1}}{\xi} \sqrt{qs/t};q)_\infty,
\]
and \eqref{eq:handy identities}.
\end{proof}
Taking the limits $s\to 0$ and $s,t \to 0$, which corresponds to $\rho_{\tau,\si} \to \rho_{\tau,\infty}$ and $\rho_{\tau,\si} \to -\ga\ga^*$ in the algebra $\mathcal A_q$, we obtain the following results. The functions involved are big $q$-Laguerre polynomials $L_n$ defined by \eqref{eq:bigqLaguerre}, the Wall polynomials $p_n$ defined by \eqref{eq:defWallpol}, and the $q$-Bessel functions \eqref{eq:bigqBessel} and \eqref{eq:HahnExtonqBessel}.
\begin{cor} \label{cor:identities}Let $k,m,n,u\in \N$, $x\in \Z$ and $t>0$. The following identities hold:
\[
\begin{split}
\int_{-1}^{t} & L_k(zq^{1+u}/t; -z,z/t;q) L_m(zq^{1+k-m+n}/t;q^{n-m},-q^{n-m}/t;q) \\
& \times \mathcal J_{u+m-k-n,x-u}^{q/t}(-zq^{k-m+n}/t;q) \frac{ (-zq,zq/t;q)_\infty }{|z|^k (zq^{u+1}/t;q)_\infty}d_q z \\
=&\, (1-q)\,t^{1-m} q^{-m(m-1)-mn-n+m + x(k-m)}\\
& \times \frac{(-q/t,q^{k-m+1};q)_m (q,q^{x+1};q)_\infty \te(-qt;q) }{(-q^{n-m+1}/t,q^{n-m+1};q)_m (-q^{u+1}/t, q^{u+m-k-n+1};q)_\infty} \\
& \times  L_n(-q^{u+1}/t;-1/t,q^x;q) p_m(q^{n+x};q^{k-m};q).
\end{split}
\]
and
\[
\begin{split}
\sum_{z \in \N} q^{z(u-k+1)} \frac{(q^{m-k-n+u+1};q)_\infty}{(q;q)_z} J_{m-k-n+u}(q^{x+z};q) p_k(q^u;q^{z};q) p_m(q^{k+z};q^{n-m};q) \\
= q^{x(k-m)}\frac{ (q^{k-m+1};q)_m}{(q^{n-m+1};q)_m}(q^{x+1};q)_\infty p_n(q^u;q^x;q) p_m(q^{n+x};q^{k-m};q).
\end{split}
\]
\end{cor}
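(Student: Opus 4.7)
The corollary is stated explicitly as the two successive limits $s\to 0$ and $s,t\to 0$ of the $q$-integral identity in Theorem \ref{thm:sum 2phi2 3phi2 3phi2}; at the algebraic level these are the specialisations $\rho_{\tau,\si}\to 2q^{1-\tau-\si}\rho_{\tau,\infty}$ and $\rho_{\tau,\si}\to -\gamma\gamma^*$ of \eqref{def:rhoinftysigma}. My plan is to carry out these limits directly, using the degenerations of each special-function factor already recorded in the paper.

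For the first identity I would specialise Theorem \ref{thm:sum 2phi2 3phi2 3phi2} to $\xi=\lambda=-1$, move the spectral variable $u$ to a point of the discrete spectrum of $\mathcal T(\rho_{\tau,\si})$ (so that $u\to 0$ at a controlled rate as $s\to 0$), shift the integration variable $z\mapsto zq^{-k}$ so the $q$-integration range becomes $[-1,t]$, and then send $s\to 0$. Under this limit the continuous dual $q$-Hahn polynomials $p_k(\mu_u;\dots)$ and $p_n(\mu_u;\dots)$ collapse to big $q$-Laguerre polynomials $L_k$ and $L_n$ (the $_3\varphi_2$ limit of the Remark after Proposition \ref{prop:eigenvector V tau infty}); the $q$-Meixner function $\phi$ converges to the big $q$-Bessel function $\mathcal J^{q/t}_{u+m-k-n,\,x-u}$ by \eqref{eq:bigqBessel}; and the $L_m(-\xi x q^{1+r}/s;\dots)$ factor on the right-hand side, whose first parameter blows up, reduces to the Wall polynomial $p_m(q^{n+x};q^{k-m};q)$ by the same big-$q$-Laguerre-to-Wall limit. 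The remaining $q$-Pochhammer and $\theta$-function prefactors would then be combined using the $\theta$-product identity \eqref{eq:te-product} and \eqref{eq:handy identities} to produce the stated scalar prefactor.

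For the second identity I would take the further limit $t\to 0$. The three big $q$-Laguerre polynomials $L_k$, $L_m$ and $L_n$ degenerate to the Wall polynomials $p_k(q^u;q^z;q)$, $p_m(q^{k+z};q^{n-m};q)$ and $p_n(q^u;q^x;q)$ respectively (after substituting $z=-q^{z'}$ on the negative part of the $q$-integration support and renaming $z'\to z$), while the big $q$-Bessel function $\mathcal J^{q/t}_\alpha$ passes to the Hahn-Exton $q$-Bessel function $J_\alpha$ via \eqref{eq:HahnExtonqBessel}; simultaneously the positive support $z\in tq^\N$ of the $q$-integral is suppressed by the vanishing weight $(1-q)\,tq^j$, leaving the sum over $z\in\N$ of the corollary.

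The main technical obstacle will be the bookkeeping of the divergent factors --- powers of $s$ and $t$, the infinite product $\theta(-qt;q)$ on the right-hand side, the $|z|^{-k}$ factor in the integrand, and the blown-up arguments of the big $q$-Laguerre polynomials --- which must cancel exactly against each other. Each individual special-function degeneration is standard once the spectral variable $u$ has been discretised appropriately, but matching the scalar prefactors $\theta(-qt;q)\,(q,q^{x+1};q)_\infty/\bigl((-q^{u+1}/t,q^{u+m-k-n+1};q)_\infty\bigr)$ in the first identity and $(q^{x+1};q)_\infty$ in the second will require careful tracking of the normalisations $K_t$ of \eqref{eq:orthogonality qMeixner} together with those of the big $q$-Laguerre and continuous dual $q$-Hahn polynomials throughout the limits.
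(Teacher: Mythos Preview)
Your plan is essentially the paper's own proof. The paper makes exactly the substitution $(u,x,\xi,\lambda)\mapsto(-q^u\sqrt{q/st},-q^x,-1,-1)$ in Theorem~\ref{thm:sum 2phi2 3phi2 3phi2} and lets $s\to 0$ for the first identity, then additionally substitutes $z\mapsto -q^z$ and lets $t\to 0$ for the second; your description of moving $u$ to the discrete spectrum and then tracking the degenerations of $p_n\to L_n$, $\phi_\gamma\to\mathcal J$, $L_m\to p_m$ (and further $L\to p$, $\mathcal J\to J$) is precisely this. The only point you leave slightly implicit is the simultaneous discretisation $x\mapsto -q^x$ needed so that the right-hand big $q$-Laguerre factor collapses cleanly to a Wall polynomial, but you clearly have this in mind.
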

\begin{proof}
For the first identity substitute $(u,x,\xi,\la) \mapsto (-q^u\sqrt{q/st},-q^x,-1,-1)$ in Theorem \ref{thm:sum 2phi2 3phi2 3phi2}, and let $s\to0$. For the second identity we also substitute $z \mapsto -q^z$, and let $t\to 0$.
\end{proof}
\begin{remark} \*
\begin{enumerate}[(i)]
\item Corollary \ref{cor:identities} shows that the $(\tau,\infty)$-coupling coefficients, respectively the $(\infty,\infty)$-coupling coefficients, can be expressed in terms of big $q$-Bessel functions, respectively Hahn-Exton $q$-Bessel functions.

\item The second identity in Corollary \ref{cor:identities} is recently obtained in \cite{dCK} by De Commer and Koelink in a different quantum group setting; they use the quantum linking groupoid between quantum $\mathrm{SU}(2)$ and quantum $\mathrm{E}(2)$. They consider the identity as a $q$-analog of Erd\'elyi's identity \cite{Er} which roughly says that the Hankel transform maps a product of two Laguerre polynomials to the product of two Laguerre polynomials. The first identity in Corollary \ref{cor:identities} can be considered as another $q$-analog of Erd\'elyi's identity.
\end{enumerate}
\end{remark}

\appendix
\section{$q$-integral evaluations}

\subsection{Proof of Proposition \ref{prop:integralPQ}}
We prove the following result.
For $n \in \N$,
\begin{equation} \label{eq:intPn}
\begin{split}
\frac{1}{1-q}&\int_{z_-}^{z_+} P_n(x) \frac{ (qx/z_-,qx/z_+;q)_\infty}{(cx,dx;q)_\infty} d_qx \\
&= z_+(-d)^n q^{-\frac12n(n-1)} \frac{ (q,cdz_-z_+q^n;q)_\infty \te(z_-/z_+;q)}{ (cz_-q^n,dz_-,cz_+, dz_+;q)_\infty} \rphis{2}{1}{q^{-n}, dz_+ }{q^{1-n}/cz_-}{q, \frac{q}{dz_-}}\\
&=z_+(-d)^n q^{-\frac12n(n-1)} \frac{ (q,cdz_-z_+q^n;q)_\infty \te(z_-/z_+;q)}{ (cz_-,dz_-,cz_+q^n, dz_+;q)_\infty} \rphis{2}{1}{q^{-n}, dz_- }{q^{1-n}/cz_+}{q, \frac{q}{dz_+}},
\end{split}
\end{equation}
and
\begin{equation} \label{eq:intQn1}
\begin{split}
\frac{1}{1-q}&\int_{z_-}^{z_+} Q_n(x) \frac{ (qx/z_-,qx/z_+;q)_\infty}{(cx,dx;q)_\infty} d_qx \\
& =z_+(c/q)^n q^{-\frac12 n(n-1)} (q/cz_-;q)_n (q,q^{n+1};q)_\infty \te(z_-/z_+;q) \rphis{2}{1}{q^{-n}, q/dz_+}{cz_-q^{-n}}{q,dz_-}\\
& =z_+ (c/q)^n q^{-\frac12 n(n-1)} (q/cz_+;q)_n (q,q^{n+1};q)_\infty \te(z_-/z_+;q) \rphis{2}{1}{q^{-n}, q/dz_-}{cz_+q^{-n}}{q,dz_+}.
\end{split}
\end{equation}
Furthermore, for $n \in -\N_{\geq 1}$,
\begin{equation} \label{eq:intQn2}
\int_{z_-}^{z_+} Q_n(x) \frac{ (qx/z_-,qx/z_+;q)_\infty}{(cx,dx;q)_\infty} d_qx = 0.
\end{equation}
Here $P_n$ and $Q_n$ are defined by \eqref{eq:defPn} and \eqref{eq:defQn}.\\

The functions $P_n$ and $Q_n$ are essentially two instances of the same function. Indeed, we define
\[
\psi_\ga(x;c,d) = (cq\ga x,dx;q)_\infty \rphis{1}{1}{q\ga}{cq\ga x}{\frac{cq}{d}},
\]
and
\begin{equation} \label{eq:defphigamma}
\varphi_\ga(x) = \frac{ \te(cz_+,dz_+\ga;q)}{\te(d/c;q)} \psi_\ga(x;c,d) + (c \leftrightarrow d),
\end{equation}
where $(c\leftrightarrow d)$ means that the preceding expression is repeated, but with $c$ and $d$ interchanged. Now $P_n$ and $Q_n$ are related to $\varphi_\ga$ by
\begin{equation} \label{eq:phi=PQ}
\varphi_{\ga}(x) =
\begin{cases}
\displaystyle (cdz_+)^{-n} (q^{n+1};q)_\infty \te(cz_+,dz_+;q)\, P_n(x),& \text{if}\ \ga = q^n,\\ \\
\displaystyle (z_-)^n (q^{n+2}/cdz_-z_+;q)_\infty \, Q_n(x), & \text{if}\ \ga = \dfrac{q^{n+1}}{cdz_-z_+},
\end{cases}
\end{equation}
see \cite[\S3.4]{Groen}.

To prove identities \eqref{eq:intPn},\eqref{eq:intQn1} and \eqref{eq:intQn2} we evaluate the $q$-integral
\[
J_\ga(c,d) = \int_{z_-}^{z_+} \varphi_\ga(x) \frac{ (qx/z_-,qx/z_+;q)_\infty}{(cx,dx;q)_\infty} d_qx.
\]
We will use the evaluation formula \cite[(2.10.20)]{GR}
\begin{equation} \label{eq:qintegraleval}
\int_{A}^{B} \frac{ (qx/A,qx/B;q)_\infty }{ (Cx,Dx;q)_\infty } d_q x = (1-q)B \frac{ (q,ABCD;q)_\infty \te(A/B;q) }{ (AC,AD,BC,BD;q)_\infty},
\end{equation}
which is the nonterminating $q$-Vandermonde sum written as a $q$-integral.
By \eqref{eq:defphigamma} the $q$-integral $J_\ga$ splits as
\[
J_\ga(c,d) =  \frac{ \te(cz_+,dz_+\ga;q)}{\te(d/c;q)} I_\ga(c,d) + (c \leftrightarrow d),
\]
where
\[
I_\ga(c,d) = \int_{z_-}^{z_+} \psi_\ga(x;c,d) \frac{ (qx/z_-,qx/z_+;q)_\infty}{(cx,dx;q)_\infty} d_qx.
\]
To evaluate $I_\ga$ we write $\psi_\ga$ as
\[
\psi_\ga(x;c,d) = (dx,q\ga, cq/d;q)_\infty \rphis{2}{1}{0,cx}{cq/d}{q\ga}, \qquad |\ga|<q^{-1},
\]
see \cite[(2.3)]{Groen}. We interchange the order of summation and $q$-integration, which is allowed by absolute convergence, then from \eqref{eq:qintegraleval} we obtain
\begin{equation} \label{eq:I=2phi1}
\begin{split}
I_\ga(c,d) &= (q\ga,cq/d;q)_\infty \sum_{n=0}^\infty \frac{ (q\ga)^n}{(q,cq/d;q)_n } \int_{z_-}^{z_+} \frac{ (qx/z_-,qx/z_+;q)_\infty }{(cq^n x;q)_\infty} d_qx \\
& = (1-q)z_+ \frac{ (q,q\ga, cq/d;q)_\infty \te(z_-/z_+;q) }{(cz_-,cz_+;q)_\infty} \rphis{2}{1}{cz_-, cz_+}{cq/d}{q,q\ga},
\end{split}
\end{equation}
provided $|\ga|< q^{-1}$. So we have
\begin{equation} \label{eq:Jgamma=2phi1+2phi1}
J_\ga(c,d) = K_\ga\, \frac{ (q/cz_+;q)_\infty \te(dz_+\ga;q)}{(cz_-,d/c;q)_\infty } \rphis{2}{1}{cz_-,cz_+}{cq/d}{q,q\ga} + (c \leftrightarrow d), \qquad |\ga|<q^{-1},
\end{equation}
where $K_\ga\, = (1-q)z_+ (q,q\ga;q)_\infty \te(z_-/z_+;q)$. For any $x \in z_-q^\Z \cup z_+q^\Z$ the function $\ga\mapsto \psi_\ga(x;c,d)$ is an entire function, so $\ga \mapsto \varphi_\ga(x)$ is analytic on $\C\setminus\{0\}$. Therefore, by absolute convergence $\ga\mapsto J_\ga(c,d)$ is also analytic on $\C\setminus\{0\}$. An analytic continuation of the right hand side of \eqref{eq:Jgamma=2phi1+2phi1} in $\ga$ can be obtained by applying the three-term transformation formula for $_2\varphi_1$-functions \cite[(III.32)]{GR} with parameters $(A,B,C,Z) = (cz_-,dz_-,qz_-/z_+,q/cdz_-z_+\ga)$, which gives us
\begin{equation} \label{eq:Jgamma}
J_\ga(c,d) = K_\ga \frac{ (qz_-/z_+;q)_\infty \te(cdz_-z_+\ga;q) }{(cz_-,dz_-;q)_\infty } \rphis{2}{1}{cz_-,dz_-}{qz_-/z_+}{q,\frac{q}{cdz_-z_+\ga}}, \qquad |\ga|>\frac{q}{|cdz_-z_+|}.
\end{equation}
Applying Heine's transformation \cite[(III.2)]{GR} in \eqref{eq:Jgamma} gives
\[
J_\ga(c,d) = K_\ga \frac{ (q/dz_+, q/cz_+\ga, cdz_-z_+\ga;q)_\infty}{(cz_-,dz_-;q)_\infty} \rphis{2}{1}{1/\ga, dz_-}{q/cz_+\ga}{q,\frac{ q}{dz_+}}, \qquad \left|q/dz_+\right|<1.
\]
Setting $\ga = q^n$, we see that the $_2\varphi_1$-series terminates, so that the condition $|q/dz_+|<1$ is not needed. Using \eqref{eq:phi=PQ} this gives us the second identity in \eqref{eq:intPn}. The first one is obtained by interchanging $z_-$ and $z_+$, or, alternatively, reversing the order of summation in the $_2\varphi_1$-series and using $c \leftrightarrow d$ symmetry.

The results involving $Q_n$ follow from applying Heine's transformation \cite[(III.1)]{GR} in \eqref{eq:Jgamma};
\[
J_\ga(c,d) = K_\ga(z_-,z_+) \frac{ (cdz_-z_+\ga, q/dz_+\ga;q)_\infty }{(cz_-;q)_\infty } \rphis{2}{1}{q/dz_+, q/cdz_-z_+\ga}{q/dz_+\ga}{q,dz_-}, \qquad |dz_-|<1.
\]
We set $\ga= q^{n+1}/cdz_-z_+$ with $n \in\Z$. In case $n \in \N$, the $_2\varphi_1$-series is a finite sum, and then the condition $|dz_-|<1$ is not needed. Using \eqref{eq:phi=PQ} we obtain the first expressions in \eqref{eq:intQn1}. Interchanging $z_-$ and $z_+$ gives the other expression. In case $n \in -\N_{\geq 1}$, $J_\ga(c,d)$ vanishes, since $(cdz_-z_+\ga;q)_\infty =0$, which proves \eqref{eq:intQn2}.

\subsection{Proof of Proposition \ref{prop:integral qMeixner}} \label{app:integral qMeixner}
We prove
\[
\begin{split}
\int_{-1}^{t} &\phi_\ga(xq^m;aq^{-m}, bq^{-m};q) \frac{ (-xq,xq/t;q)_\infty }{(-ax,-bx;q)_\infty }\, d_q x   = \\ &(1-q)t \frac{ (q,abt\ga q^{-m};q)_\infty \te(-1/t;q) }{ (aq^{-m},bq^{-m},-at,-bt;q)_\infty }  (-q^{-m}/t;q)_m \rphis{3}{2}{ q^{-m},-bt,-at}{abt\ga q^{-m}, -qt}{q,q},
\end{split}
\]
where $\phi_\ga$ denotes the $q$-Meixner function \eqref{def:qMeixner}. \\

We write the $q$-Meixner function as a $_2\varphi_1$-series;
\[
\phi_\ga(xq^m;aq^{-m}, bq^{-m};q) = \frac{ (-a\ga q^{-m};q)_\infty }{(aq^{-m};q)_\infty } \rphis{2}{1}{-1/\ga, -bx}{bq^{-m}}{q,-a\ga q^{-m}}, \qquad |a\ga q^{-m}|<1.
\]
By interchanging the order of summation and $q$-integration, and using \eqref{eq:qintegraleval}, we obtain
\[
\begin{split}
\int_{-1}^{t} & \phi_\ga(xq^m;aq^{-m}, bq^{-m};q) \frac{ (-xq,xq/t;q)_\infty }{(-ax,-bx;q)_\infty }\, d_q x \\
 &= \frac{ (-a\ga q^{-m};q)_\infty }{(aq^{-m};q)_\infty } \sum_{n=0}^\infty \frac{ (-1/\ga;q)_n (-a\ga q^{-m})^n }{(q,bq^{-m};q)_n } \int_{-1}^{t} \frac{ (-xq,xq/t;q)_\infty }{(-ax,-bxq^n;q)_\infty }\, d_q x \\
&= \frac{ (-a\ga q^{-m};q)_\infty }{(aq^{-m};q)_\infty } \sum_{n=0}^\infty \frac{ (-1/\ga;q)_n (-a\ga q^{-m})^n }{(q,bq^{-m};q)_n } (1-q)t \frac{ (q,-abtq^n;q)_\infty \te(-1/t;q) }{(a, bq^n, -at, -btq^n;q)_\infty} \\
& = (1-q)t \frac{ (q,-a\ga q^{-m},-abt;q)_\infty \te(-1/t;q) }{ (a,aq^{-m},b,-at,-bt;q)_\infty } \rphis{3}{2}{ -1/\ga,b,-bt}{bq^{-m},-abt}{q,-a\ga q^{-m}} .
\end{split}
\]
We apply transformation formulas \cite[(III.9)]{GR} with parameters $(A,B,C,D,E)=(-bt,b,-1/\ga,bq^{-m},-abt)$, and \cite[(III.13)]{GR} with $(B,C,D,E) = (-b\ga q^{-m},-bt,abt\ga q^{-m},bq^{-m})$, then we find
\[
\begin{split}
\rphis{3}{2}{ -1/\ga,b,-bt}{bq^{-m},-abt}{q,-a\ga q^{-m}} &= \frac{ (a,abt\ga q^{-m};q)_\infty }{ (-abt,-a\ga q^{-m};q)_\infty} \rphis{3}{2}{q^{-m}, -bt, -b\ga q^{-m} }{bq^{-m}, abt\ga q^{-m}}{q, a}\\
& = \frac{ (a,abt\ga q^{-m};q)_\infty (-q^{-m}/t;q)_m}{ (-abt,-a\ga q^{-m};q)_\infty (bq^{-m};q)_m}  \rphis{3}{2}{ q^{-m},-at,-bt}{abt\ga q^{-m}, -qt}{q,q}.
\end{split}
\]
This proves the result for $|a\ga q^{-m}|<1$. By analytic continuation it holds for all $\ga$.

\end{document}